\documentclass[10pt]{article}
\usepackage{amssymb}
\usepackage{amsmath}
\usepackage{fullpage}
\usepackage[T1]{fontenc}

%
%
%

\setlength{\parindent}{4.0ex}
\linespread{1.0}

\newcommand{\abs}[1]{\left| #1 \right|}

\newcommand{\w}[0]{\omega}
\newcommand{\N}[0]{\mathbb{N}}
\newcommand{\Z}[0]{\mathbb{Z}}

\newcommand{\scr}[1]{ \mathcal{ #1 } }
\long\def\symbolfootnote[#1]#2{\begingroup%
\def\thefootnote{\fnsymbol{footnote}}\footnote[#1]{#2}\endgroup} 
%
%
\newtheorem{theorem}{Theorem}[section]
\newtheorem{lemma}[theorem]{Lemma}
\newtheorem{proposition}[theorem]{Proposition}
\newtheorem{corollary}[theorem]{Corollary}
\newtheorem{claim}[theorem]{Claim}

\newenvironment{proof}[1][Proof]{\begin{trivlist}
\item[\hskip \labelsep {\bfseries #1}]}{\end{trivlist}}
\newenvironment{definition}[1][Definition]{\begin{trivlist}
\item[\hskip \labelsep {\bfseries #1}]}{\end{trivlist}}
\newenvironment{example}[1][Example]{\begin{trivlist}
\item[\hskip \labelsep {\bfseries #1}]}{\end{trivlist}}

\newcommand{\qed}{\hfill \mbox{\raggedright \rule{.07in}{.1in}}}

%
%
\title{Permutation Complexity and the Letter Doubling Map}
\author{Steven Widmer\footnote{(s.widmer1@gmail.com) The research presented here was supported by grant no. 090038012 from the Icelandic Research Fund.}}
\date{}
\begin{document}


%
%
%
%
	
\maketitle


\begin{abstract}
\noindent Given a countable set $X$ (usually taken to be $\N$ or $\Z$), an infinite permutation $\pi$ of $X$ is a linear ordering $\prec_\pi$ of $X$, introduced in $\cite{FlaFrid}$.  This paper investigates the combinatorial complexity of infinite permutations on $\N$ associated with the image of uniformly recurrent aperiodic binary words under the letter doubling map.  An upper bound for the complexity is found for general words, and a formula for the complexity is established for the Sturmian words and the Thue-Morse word.   
%
$\vspace{1.0ex}$

\noindent $\textbf{Keywords:}$ Infinite permutation, Permutation complexity, uniformly recurrent, Sturmian words, Thue-Morse word
$\vspace{1.0ex}$

\noindent $\textbf{MSC:}$ 05A05 68R15
\end{abstract}
\section{Introduction}

%
Permutation complexity of aperiodic words is a relatively new notion of word complexity which is based on the idea of an infinite permutation associated to an aperiodic word.  For an infinite aperiodic word $\w$, no two shifts of $\w$ are identical.  Thus, given a linear order on the symbols used to compose $\w$, no two shifts of $\w$ are equal lexicographically.  The infinite permutation associated with $\w$ is the linear order on $\N$ induced by the lexicographic order of the shifts of $\w$.  The permutation complexity of the word $\w$ will be the number of distinct subpermutations of a given length of the infinite permutation associated with $\w$.

Infinite permutations associated with infinite aperiodic words over a binary alphabet act fairly well-behaved, but many of the arguments used for binary words break down when used with words over more than two symbols.  Given a subpermutation of length $n$ of an infinite permutation associated with a binary word, a portion of length $n-1$ of the word can be recovered from the subpermutation.  This is not always the case for subpermutations associated with words over 3 or more symbols.  
For binary words the subpermutations depend on the order on the symbols used to compose $\w$, but the permutation complexity does not depend on the order.  For words over 3 or more symbols, not only do the subpermutations depend on the order on the alphabet but so does the permutation complexity.  
%
%


%

%
%

We start with some basic notation and definitions.  Some properties of infinite permutations are given in Section $\ref{GeneralPermResults}$.  In Section $\ref{SecUnifRecWords}$ we introduce a mapping, $\delta$, on the set of subpermutations of an uniformly recurrent word, and an upper bound for the complexity function is calculated for the image of an aperiodic uniformly recurrent word under the letter doubling map.  We then show that when the mapping $\delta$ is injective it implies that restricting an image of $\delta$ is also injective in Section $\ref{SecRestrictions}$.  The complexity function is established for the image of a Sturmian word in Section $\ref{SecSturmianWords}$, and for the image of the Thue-Morse word in Section $\ref{SecThueMorseWord}$.


\subsection{Words}
A $\textit{word}$ is a finite, (right) infinite, or bi-infinite sequence of symbols taken from a finite non-empty set, $\scr{A}$, called an $\textit{alphabet}$.  The standard operation on words is concatenation, and is represented by juxtaposition of letters and words.  A $\textit{finite word}$ over $\scr{A}$ is a word of the form $u = a_1 a_2 \ldots a_n$ with $n \geq 0$ (if $n=0$ we say $u$ is the $\textit{empty word}$, denoted $\epsilon$) and each $a_i \in \scr{A}$; the $\textit{length}$ of the word $u$ is the number of symbols in the sequence and is denoted by $\abs{u} = n$.  For $a \in \scr{A}$, let $\abs{u}_a$ denote the number of occurrences of the letter $a$ in the word $u$.  The set of all finite words over the alphabet $\scr{A}$ is denoted by $\scr{A}^*$, and is a free monoid with concatenation of words as the operation.  

%
A $\textit{(right)}$ $\textit{infinite}$ $\textit{word}$ over $\scr{A}$ is a word of the form $\w = \w_0 \w_1 \w_2 \ldots$ with each $\w_i \in \scr{A}$, and the set of all infinite words over $\scr{A}$ is denoted $\scr{A}^\N$.  Given $\w \in \scr{A}^* \cup \scr{A}^\N$, any word of the form $u=\w_i\w_{i+1} \ldots \w_{i+n-1}$, with $i \geq 0$, is called a $\textit{factor}$ of $\w$ of length $n \geq 1$.  The set of all factors of a word $\w$ is denoted by $\scr{F}(\w)$.  The set of all factors of length $n$ of $\w$  is denoted $\scr{F}_\w(n)$, and let $\rho_\w(n) = \abs{\scr{F}_\w (n)}$.  The function $\rho_\w: \N \rightarrow \N $ is called the $\textit{factor}$ $\textit{complexity}$ $\textit{function}$, or $\textit{subword}$ $\textit{complexity}$ $\textit{function}$, of $\w$ and it counts the number of factors of length $n$ of $\w$.  For a natural number $i$ we denote by $\w[i] = \w_i\w_{i+1}\w_{i+2}\w_{i+3}\ldots$ the $i \textit{-letter}$ $\textit{shift}$ $\textit{of}$ $\w$.  For natural numbers $i \leq j$, $\w[i,j] = \w_i\w_{i+1}\w_{i+2} \ldots \w_j$ denotes the factor of length $j-i+1$ starting at position $i$ in $\w$.

For words $u \in A^*$ and $v \in A^* \cup A^\N$ where $\w = uv$, we call $u$ a $\textit{prefix}$ of $\w$ and $v$ a $\textit{suffix}$ of $\w$.  A word $\w$ is said to be $\textit{periodic}$ of period $p$ if for each $i \in \N$, $\w_i = \w_{i+p}$, and $\w$ is said to be $\textit{eventually}$ $\textit{periodic}$ of period $p$ if there exists an $N \in \N$ so that for each $i > N$, $\w_i = \w_{i+p}$; or equivalently, $\w$ has a periodic suffix.  A word $\w$ is said to be $\textit{aperiodic}$ if it is not periodic or eventually periodic.

The infinite word $\w \in \scr{A}^\N$ is said to be $\textit{recurrent}$ if for any prefix $p$ of $\w$ there exists a prefix $q$ of $\w$ so that $q = pvp$ for some $v \in \scr{A}^*$.  Equivalently, a word $\w$ is recurrent if each factor of $\w$ occurs infinitely often in $\w$.  The word $\w \in \scr{A}^\N$ is $\textit{uniformly}$ $\textit{recurrent}$ if each factor occurs infinitely often  with bounded gaps.  Thus if $\w$ is uniformly recurrent, for each integer $n > 0$ there is a positive integer $N$ so that for each factor $v$ of $\w$ with $\abs{v} = N$, $\scr{F}_\w(n) \subset \scr{F}(v)$.

%
Let $\scr{A}$ and $\scr{B}$ be two finite alphabets.  A map $\varphi: \scr{A}^* \rightarrow \scr{B}^*$ so that $\varphi(uv) = \varphi(u)\varphi(v)$ for any $u,v \in \scr{A}^*$ is called a $\textit{morphism}$ of $\scr{A}^*$ into $\scr{B}^*$, and $\varphi$ is defined by the image of each letter in $\scr{A}$.  A morphism on $\scr{A}$ is a morphism from $\scr{A}^*$ into $\scr{A}^*$, also called an $\textit{endomorphism}$ of $\scr{A}$.  A morphism $\varphi$ is said to be $\textit{non-erasing}$ if the image of any non-empty word is not empty.  The morphism $d: \scr{A}^* \mapsto \scr{A}^*$ defined by $d(a) = aa$ for each $a \in \scr{A}$ is called the $\textit{letter}$ $\textit{doubling}$ $\textit{map}$. 


\subsection{Permutations from words}
The idea of an infinite permutation that will be here used was introduced in $\cite{FlaFrid}$.  This paper will be dealing with permutation complexity of infinite words so the set used in the following definition will be $\N$ rather than an arbitrary countable set.  To define an $\textit{infinite permutation}$ $\pi$, start with a total order $\prec_\pi$ on $\N$, together with the usual order $<$ on $\N$.  To be more specific, an infinite permutation is the ordered triple $\pi = \left\langle \N,\prec_\pi,<  \right\rangle$, where $\prec_\pi$ and $<$ are total orders on $\N$.  The notation to be used here will be $\pi(i) < \pi(j)$ rather than $i \prec_\pi j.$

Given an infinite aperiodic word $\w = \w_0\w_1\w_2 \ldots$ on an alphabet $\scr{A}$, fix a linear order on $\scr{A}$.  We will use the binary alphabet $\scr{A} = \{0, 1\}$ and use the natural ordering $0<1$.  Once a linear order is set on the alphabet, we can then define an order on the natural numbers based on the lexicographic order of shifts of $\w$.  Considering two shifts of $\w$ with $a \neq b$, $\w[a] = \w_a\w_{a+1}\w_{a+2} \ldots$ and $\w[b] = \w_b\w_{b+1}\w_{b+2} \ldots$, we know that $\w[a] \neq \w[b]$ since $\w$ is aperiodic.  Thus there exists some minimal number $c \geq 0$ so that $\w_{a+c} \neq \w_{b+c}$ and for each $0 \leq i < c$ we have $\w_{a+i} = \w_{b+i}$.  We call $\pi_\w$ the infinite permutation associated with $\w$ and say that $\pi_\w(a) < \pi_\w(b)$ if $\w_{a+c} < \w_{b+c}$, else we say that $\pi_\w(b) < \pi_\w(a)$.  

For natural numbers $a \leq b$ consider the factor $\w[a, b] = \w_a\w_{a+1} \ldots \w_b$ of $\w$ of length $b - a + 1$.  Denote the finite permutation of $\{ 1, 2, \ldots , b - a + 1 \}$ corresponding to the linear order by $\pi_\w[a,b]$.  That is $\pi_\w[a,b]$ is the permutation of $\{ 1, 2, \ldots , b - a + 1 \}$ so that for each $0 \leq i,j \leq (b - a)$, $ \pi_\w[a,b](i) < \pi_\w[a,b](j)$ if and only if $\pi_\w(a + i) < \pi_\w(a + j)$.  Say that $p = p_0p_1 \cdots p_n$ is a $\textit{(finite) subpermutation}$ of $\pi_\w$ if $p = \pi_\w[a,a+n]$ for some $a,n \geq 0$.  For the subpermutation $p = \pi_\w[a,a+n]$ of $\{1, 2, \cdots, n+1 \}$, we say the $\textit{length}$ of $p$ is $n+1$.

Denote the set of all subpermutations of $\pi_\w$ by $\mathrm{Perm}^\w$, and for each positive integer $n$ let 
$$\mathrm{Perm}^\w(n) = \{ \hspace{1.0ex} \pi_\w[i,i+n-1] \hspace{1.0ex} \left| \hspace{1.0ex} i \geq 0 \right. \hspace{1.0ex} \}$$
denote the set of distinct finite subpermutations of $\pi_\w$ of length $n$.  The $\textit{permutation complexity function}$ of $\w$ is defined as the total number of distinct subpermutations of $\pi_\w$ of a length $n$, denoted $\tau_\w(n) = \abs{\mathrm{Perm}^\w(n)}$.

\begin{example}  
Let's consider the well-known Fibonacci word, 
$$t = 0100101001001010010100100101\ldots,$$
with the alphabet $\scr{A} = \{0,1 \}$ ordered as $0 < 1$.  We can see that $t[2] = 001010\ldots $ is lexicographically less than $t[1] = 100101\ldots$, and thus $\pi_t(2) < \pi_t(1)$.

Then for a subpermutation, consider the factor $t[3,5] = 010$.  We see that $\pi_t[3,5] = (231)$ because in lexicographic order we have $\pi_t(5) < \pi_t(3) < \pi_t(4)$.
\end{example}

\section{Some General Permutation Properties}
\label{GeneralPermResults}

Initially work has been done with infinite binary words (see $\cite{AvgFriKamSal, FlaFrid, Makar06, Makar09, Makar10}$).  Suppose $\w = \w_0\w_1\w_2\ldots$ is an aperiodic infinite word over the alphabet $\scr{A}=\{ 0,1 \}$.  First let's look at some remarks about permutations generated by binary words where we use the natural order on $\scr{A}$.
\begin{claim}
\emph{($\cite{Makar06}$)}
\label{PCClaim01}
For an infinite aperiodic word $\w$ over $\scr{A} = \{ 0, 1 \}$ with the natural ordering we have:

(1) $\pi_\w(i) < \pi_\w(i+1)$ if and only if $\w_i = 0$.

(2) $\pi_\w(i) > \pi_\w(i+1)$ if and only if $\w_i = 1$.

(3) If $\w_i = \w_j$, then $\pi_\w(i) < \pi_\w(j)$ if and only if $\pi_\w(i+1) < \pi_\w(j+1)$
\end{claim}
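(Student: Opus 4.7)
The plan is to unwind the definition: $\pi_\w(a) < \pi_\w(b)$ means precisely that $\w[a]$ is lexicographically smaller than $\w[b]$. Since the alphabet is binary, the relationship between $\w_i$ and $\pi_\w(i), \pi_\w(i+1)$ should be read off directly by comparing the shifts $\w[i]$ and $\w[i+1]$, which differ only by deleting a single initial letter. The main mild subtlety is to invoke aperiodicity at the right moments; apart from that, everything reduces to short case analyses.

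For parts (1) and (2), I would fix $i$ and consider the maximal constant prefix of $\w[i]$. If $\w_i = 0$, aperiodicity rules out $\w[i] = 0^\infty$, so there is some minimal $k \geq 1$ with $\w_{i+k} = 1$, giving $\w[i] = 0^{k} 1 \w[i+k+1]$. Then $\w[i+1] = 0^{k-1} 1 \w[i+k+1]$ (agreeing with $\w[i]$ from position $k$ onward). The two shifts agree on their first $k-1$ letters, and at position $k-1$ the shift $\w[i]$ has a $0$ while $\w[i+1]$ has a $1$. Hence $\w[i] <_{\mathrm{lex}} \w[i+1]$, i.e.\ $\pi_\w(i) < \pi_\w(i+1)$. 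The symmetric argument with $\w_i = 1$ and a maximal initial run of $1$'s yields $\pi_\w(i) > \pi_\w(i+1)$. Since the two cases $\w_i = 0$ and $\w_i = 1$ are exhaustive and mutually exclusive, the \emph{iff} in both (1) and (2) follows at once from these two implications.

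For part (3), suppose $\w_i = \w_j$ with $i \neq j$. Then the first letters of $\w[i]$ and $\w[j]$ agree, so the lexicographic comparison of $\w[i]$ with $\w[j]$ coincides with the lexicographic comparison of their tails $\w[i+1]$ and $\w[j+1]$. Aperiodicity guarantees $\w[i] \neq \w[j]$ and $\w[i+1] \neq \w[j+1]$ (distinct shifts of an aperiodic word are distinct), so both comparisons are strict, giving $\w[i] <_{\mathrm{lex}} \w[j]$ iff $\w[i+1] <_{\mathrm{lex}} \w[j+1]$, which is exactly the statement $\pi_\w(i) < \pi_\w(j) \iff \pi_\w(i+1) < \pi_\w(j+1)$.

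There is no real obstacle here; the only thing to be careful about is that the constant initial run in parts (1)--(2) must be finite, and that the two tails in part (3) must not coincide. Both are immediate from $\w$ being aperiodic, and without aperiodicity the statements could fail (for instance on $\w = 0^\infty$, where no lexicographic comparison is strict).
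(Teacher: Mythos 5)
Your proof is correct. The paper does not actually prove this claim --- it is quoted from Makarov's work with a citation and no argument is given --- so there is nothing to compare against; your argument (maximal constant prefix for (1)--(2), stripping the common first letter for (3), with aperiodicity invoked exactly where needed to guarantee the runs are finite and distinct shifts are distinct) is the standard and intended one.
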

\begin{lemma}
\emph{($\cite{Makar06}$)}
\label{PermComp01}
Given two infinite binary words u = $u_0u_1\ldots$ and $v=v_0v_1 \ldots$ with $\pi_u[0, n+1] = \pi_v[0, n+1]$, it follows that $u[0,n] = v[0,n]$.
\end{lemma}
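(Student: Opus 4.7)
The plan is to recover the binary prefix $u[0,n]$ letter-by-letter directly from the subpermutation $\pi_u[0,n+1]$ by exploiting parts (1) and (2) of Claim \ref{PCClaim01}. The crucial preliminary observation is that a finite subpermutation $\pi_\w[a,b]$ preserves all pairwise comparisons of $\pi_\w$ on the interval $[a,b]$; this is literally how $\pi_\w[a,b]$ was defined in the introduction.

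First I would note that for each index $i$ with $0 \leq i \leq n$, the relative order of $\pi_u(i)$ and $\pi_u(i+1)$ is recorded inside the subpermutation $\pi_u[0,n+1]$: namely, it agrees with the comparison of $\pi_u[0,n+1](i)$ with $\pi_u[0,n+1](i+1)$. Hence, invoking parts (1) and (2) of Claim \ref{PCClaim01}, the letter $u_i$ is determined by the subpermutation alone: $u_i = 0$ if $\pi_u[0,n+1](i) < \pi_u[0,n+1](i+1)$, and $u_i = 1$ otherwise.

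Applying the identical decoding to $v$ and invoking the hypothesis $\pi_u[0,n+1] = \pi_v[0,n+1]$, we get $u_i = v_i$ for every $0 \leq i \leq n$, which is the claimed equality $u[0,n] = v[0,n]$.

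There is no real obstacle; the argument is a direct ``decoding'' of the subpermutation into its binary word. The only place requiring some care is the bookkeeping on indices: the subpermutation $\pi_u[0,n+1]$ has length $n+2$ and therefore encodes $n+1$ adjacent comparisons between positions $i$ and $i+1$ for $0 \leq i \leq n$, from which exactly the $n+1$ letters $u_0,u_1,\ldots,u_n$ (that is, the prefix of length $n+1$) can be read off. The letter $u_{n+1}$ is \emph{not} recoverable from $\pi_u[0,n+1]$ because that would require the comparison between positions $n+1$ and $n+2$, and position $n+2$ lies outside the range of the subpermutation; this is consistent with the statement of the lemma, which only claims equality up to index $n$.
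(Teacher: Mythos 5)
Your proof is correct: the paper states this lemma as a cited result from Makarov's work and gives no proof of its own, and your letter-by-letter decoding via parts (1) and (2) of Claim \ref{PCClaim01} is exactly the standard argument one would supply. The index bookkeeping is also right --- the subpermutation of length $n+2$ records the $n+1$ adjacent comparisons at positions $0 \leq i \leq n$, which determine precisely the letters $u_0,\ldots,u_n$ and no more.
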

We do have a trivial upper bound for $\tau_\w(n)$ being the number of permutations of length $n$, which is $n!$.  Lemma $\ref{PermComp01}$ directly implies a lower bound for the permutation complexity for a binary aperiodic word $\w$, namely the factor complexity of $\w$.  Thus, initial bounds on the permutation complexity can be seen to be:
$$ \rho_\w(n-1) \leq \tau_\w(n) \leq  n!$$

For $a \in \scr{A} = \{0,1\}$, let $\bar{a}$ denote the $\textit{complement}$ of $a$, that is $\bar{0} = 1$ and $\bar{1} = 0$.  If $u = u_1u_2u_3 \cdots$ is a word over $\scr{A}$, the $\textit{complement}$ of $u$ is defined to be the word composed of the complement of the letters in $u$, that is $\bar{u} = \bar{u}_1\bar{u}_2\bar{u}_3 \cdots$.  The following lemma shows the relationship of the complexity function between an aperiodic binary word $\w$ and its complement $\overline{\w}$.  This lemma will be used when calculating the permutation complexity of the image of Sturmian words under the doubling map in Section $\ref{SecSturmianWords}$.

\begin{lemma}
\label{PermCompOfCompliment}
Let $\w = \w_0\w_1\w_2\cdots$ be an infinite aperiodic binary word, and let $\overline{\w} = \overline{\w_0}\overline{\w_1}\overline{\w_2}\cdots$ be the complement of $\w$.  For each $n \geq 1$, 
$$\tau_\w(n) = \tau_{\overline{\w}}(n).$$
\end{lemma}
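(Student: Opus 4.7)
The plan is to show that complementing the binary alphabet exactly reverses the linear order $\prec_\pi$ on $\N$, and hence induces a length-preserving bijection on subpermutations.

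First I would establish the pointwise statement: for any two positions $a \neq b$, $\pi_\w(a) < \pi_\w(b)$ if and only if $\pi_{\overline{\w}}(a) > \pi_{\overline{\w}}(b)$. This is immediate from the definition of $\pi_\w$: let $c \geq 0$ be the minimal index with $\w_{a+c} \neq \w_{b+c}$. Since $\overline{\w}_i = \overline{\w_i}$, the same index $c$ is the minimal one where $\overline{\w}$ differs on the two shifts, and on a two-letter alphabet $\{0,1\}$ with the natural order we have $\w_{a+c} < \w_{b+c}$ if and only if $\overline{\w}_{a+c} > \overline{\w}_{b+c}$.

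Next I would transfer this to finite subpermutations. Define the involution $r_n$ on permutations of $\{1,2,\ldots,n\}$ by $r_n(p)(i) = n+1 - p(i)$; note $r_n$ is a bijection of the set of permutations of length $n$ onto itself. For any $a \geq 0$ and any $0 \leq i,j \leq n-1$, the previous paragraph gives $\pi_\w[a, a+n-1](i) < \pi_\w[a, a+n-1](j)$ if and only if $\pi_{\overline{\w}}[a, a+n-1](i) > \pi_{\overline{\w}}[a, a+n-1](j)$, which is exactly the statement that $\pi_{\overline{\w}}[a, a+n-1] = r_n\bigl(\pi_\w[a, a+n-1]\bigr)$.

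Finally, I would conclude that $r_n$ restricts to a bijection $\mathrm{Perm}^\w(n) \to \mathrm{Perm}^{\overline{\w}}(n)$: every element of $\mathrm{Perm}^{\overline{\w}}(n)$ has the form $\pi_{\overline{\w}}[a,a+n-1] = r_n(\pi_\w[a,a+n-1])$, so $r_n$ maps onto $\mathrm{Perm}^{\overline{\w}}(n)$, and it is injective because $r_n$ is an involution on the ambient set of all permutations of length $n$. Hence $\tau_\w(n) = |\mathrm{Perm}^\w(n)| = |\mathrm{Perm}^{\overline{\w}}(n)| = \tau_{\overline{\w}}(n)$.

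There is no real obstacle here; the only care needed is to verify cleanly that the ``reversal'' identity at the level of the orders $\prec_\pi$ descends without ambiguity to the finite subpermutations, which follows directly from the definition of $\pi_\w[a,a+n-1]$ in terms of the comparisons $\pi_\w(a+i)$ vs.\ $\pi_\w(a+j)$.
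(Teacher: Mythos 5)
Your proposal is correct and follows essentially the same route as the paper: both establish that complementation reverses the order of shifts, then identify $\pi_{\overline{\w}}[a,a+n-1]$ with the ``reversed'' permutation $i \mapsto n+1-p_i$ (your $r_n$ is the paper's $\tilde{p}$), and conclude the counts agree. The only cosmetic difference is that you invoke $r_n$ being an involution to get a bijection directly, while the paper proves the two inequalities separately.
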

\begin{proof}
For some $a \neq b$, suppose $\w[a] < \w[b]$.  Thus there is some (possibly empty) factor $u$ of $\w$ so that $\w[a] = u0 \cdots $ and $\w[b] = u1 \cdots$.  Then we see $\overline{\w}[a] = \overline{u}1\cdots$ and $\overline{\w}[b] = \overline{u}0\cdots$, so we have $\overline{\w}[a] > \overline{\w}[b]$.

For both $\w$ and $\overline{\w}$ it should be clear $\tau_\w(1) = \tau_{\overline{\w}}(1) = 1$, namely the subpermutation $(1)$.  Let $n \geq 2$.  For a permutation $p$ of $\{1, 2, \ldots, n \}$, define the permutation $\tilde{p}$ of $\{1, 2, \ldots, n \}$ by 
$$\tilde{p}_i = n - p_i +1$$
for each $i$.

Let $p = \pi_\w[a,a+n-1]$ be a subpermutation of $\pi_\w$ and $q = \pi_{\overline{\w}}[a,a+n-1]$ be a subpermutation of $\pi_{\overline{\w}}$.  For each $0 \leq i,j \leq n-1$, $i \neq j$, if $p_i < p_j$ then $q_i > q_j$.

Let $0 \leq i \leq n-1$.  There are $p_i - 1$ many $j$ so that $p_j < p_i$ and there are $n - p_i$ many $j$ so that $p_j > p_i$.  Therefore there are exactly $n - p_i$ many $j$ so that $q_j < q_i$, so $q_i = n - p_i + 1$.  Thus $q = \tilde{p}$ and for any $p \in \mathrm{Perm}^\w(n)$ we have $\tilde{p} \in \mathrm{Perm}^{\overline{\w}}(n)$, so
$$\abs{\mathrm{Perm}^\w(n)} \leq \abs{\mathrm{Perm}^{\overline{\w}}(n)}.$$
By a similar argument we can see $p = \tilde{q}$ and for $q \in \mathrm{Perm}^{\overline{\w}}(n)$ we have $\tilde{q} \in \mathrm{Perm}^\w(n)$, so 
$$\abs{\mathrm{Perm}^{\overline{\w}}(n)} \leq \abs{\mathrm{Perm}^\w(n)}.$$

Therefore $\abs{\mathrm{Perm}^\w(n)} = \abs{\mathrm{Perm}^{\overline{\w}}(n)}$ and $\tau_\w(n) = \tau_{\overline{\w}}(n)$.
$\qed$
\end{proof}

We would like to define some terms that will be used repeatedly in this paper.

\begin{definition}
\label{SameForm}
Two permutations $p$ and $q$ of $\{1, 2, \ldots, n  \}$ have the $\textit{same form}$ if for each $i = 0, 1, \ldots, n-1$, $p_i < p_{i+1}$ if and only if $q_i < q_{i+1}$.  For a binary word $u$ of length $n-1$, say that $p$ $\textit{has}$ $\textit{form}$ $u$ if 
$$p_i<p_{i+1} \Longleftrightarrow u_i = 0$$
for each $i = 0, 1, \ldots, n-2$.
\end{definition}

%
\begin{definition}
Let $p = \pi[a,a+n]$ be a subpermutation of the infinite permutation $\pi$.  The $\textit{left restriction of}$ $p$, denoted by $L(p)$, is the subpermutation of $p$ so that $L(p) = \pi[a, a+n-1]$.  The $\textit{right restriction of}$ $p$, denoted by $R(p)$, is the subpermutation of $p$ so that $R(p) = \pi[a+1, a+n]$.  The $\textit{middle restriction of}$ $p$, denoted by $M(p)$, is the subpermutation of $p$ so that $M(p) = R(L(p)) = L(R(p)) = \pi[a+1, a+n-1]$.
\end{definition}

For each $i$, there are $p_i-1$ terms in $p$ that are less than $p_i$ and there are $n-p_i$ terms that are greater than $p_i$.  Thus consider some $0 \leq i \leq n-1$ and the values of $L(p)_i$ and $R(p)_i$.  If $p_0 < p_{i+1}$ there will be $p_{i+1}-2$ terms in $R(p)$ less than $R(p)_i$ so we have $R(p)_i = p_{i+1}-1$.  In a similar sense, if $p_n < p_i$ we have $L(p)_i = p_i - 1$.  If $p_0 > p_{i+1}$ there will be $p_{i+1}-1$ terms in $R(p)$ less than $R(p)_i$ so we have $R(p)_i = p_{i+1}$.  In a similar sense, if $p_n > p_i$ we have $L(p)_i = p_i $.  

The values in $M(p)$ can be found by finding the values in $R(L(p))$ or $L(R(p))$.  Since $R(L(p))$ or $L(R(p))$ correspond to the same subpermutation of $p$, $R(L(p))_i < R(L(p))_j$ if and only if $L(R(p))_i < L(R(p))_j$.  Therefore $R(L(p)) = L(R(p))$.

It should also be clear that if there are two subpermutations $p= \pi_T[a,a+n]$ and $q = \pi_T[b,b+n]$ so that $p=q$ then $L(p) = L(q)$, $R(p) = R(q)$, and $M(p) = M(q)$.

\section{Uniformly Recurrent Words}
\label{SecUnifRecWords}

Let $\w$ be an aperiodic infinite uniformly recurrent word over $\scr{A} = \{0,1 \}$, and $\pi_\w$ be the infinite permutation associated with $\w$ using the natural order on the alphabet.  We would like to describe the infinite permutation associated with $d(\w)$, the image of $\w$ under the doubling map.  If $u = \w[a,a+n-1]$ is a factor of $\w$ of length $n$, it is helpful to note $d(u) = d(\w)[2a, 2a+2n-1]$ will be a factor of $d(\w)$ of length $2n$.

Since $\w$ is a uniformly recurrent word it will not contain arbitrarily long strings of contiguous $0$ or $1$.  Thus there are $k_0, k_1 \in \N$ so that 
\begin{align*}
10^{k_0}1 \\
01^{k_1}0
\end{align*}
are factors of $\w$, but $0^{k_0+1}$ and $1^{k_1+1}$ are not.  We then define the following classes of words:
\begin{align*}
C_0 &= 0^{k_0} \\
C_1 &= 0^{k_0-1}1 \\
C_2 &= 0^{k_0-2}1 \\
&\vdots \\
C_{k_0-1} &= 01 \\
C_{k_0} &= 10 \\
C_{k_0+1} &= 1^2 0 \\
&\vdots \\
C_{k_0+k_1-1} &= 1^{k_1}.
\end{align*}
For each $i \in \N$, $\w[i] = \w_i \w_{i+1} \cdots$ can have exactly one the above classes of words as a prefix.  It should be clear $C_0 < C_1 < \cdots < C_{k_0+k_1-1}$, and so $d(C_i) < d(C_j)$ for $i < j$ since the doubling map $d$ is order preserving, as shown in Lemma $\ref{ImageOfTheCiClasses}$.  The next lemma will not only show that the doubling map is an order preserving map, but also the order of the image of $\w_i$ under the doubling map.
\begin{lemma}
\label{ImageOfTheCiClasses}
Let $\w$ be as above.  Suppose $\w[a]$ and $\w[b]$ are two shifts of $\w$ for some $a \neq b$ so that $\w[a] < \w[b]$.  Moreover, suppose $C_i$ is a prefix of $\w[a]$ and $C_j$ is a prefix of $\w[b]$ where $i \leq j$.  Then $d(\w[a]) < d(\w[b])$, and 
\begin{itemize}
\item[(a)] If $\w_a = \w_b = 0$ and $i < j$, then $d(\w)[2a] < d(\w)[2a+1] < d(\w)[2b] < d(\w)[2b+1]$.
\item[(b)] If $\w_a = \w_b = 0$ and $i = j$, then $d(\w)[2a] < d(\w)[2b] < d(\w)[2a+1] < d(\w)[2b+1]$.
\item[(c)] If $\w_a = 0$ and $\w_b = 1$, then $d(\w)[2a] < d(\w)[2a+1] < d(\w)[2b+1] < d(\w)[2b]$.
\item[(d)] If $\w_a = \w_b = 1$and $i < j$, then $d(\w)[2a+1] < d(\w)[2a] < d(\w)[2b+1] < d(\w)[2b]$.
\item[(e)] If $\w_a = \w_b = 1$and $i = j$, then $d(\w)[2a+1] < d(\w)[2b+1] < d(\w)[2a] < d(\w)[2b]$.
\end{itemize}
\end{lemma}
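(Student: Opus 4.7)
The plan is to reduce the lemma to two basic ingredients: explicit formulas for $d(\w)[2a]$ and $d(\w)[2a+1]$, and a count (from the prefix $C_i$) of how many leading zeros or ones each of the four shifts has.

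I would begin by writing
$$d(\w)[2a] = \w_a\w_a\w_{a+1}\w_{a+1}\cdots = d(\w[a]), \qquad d(\w)[2a+1] = \w_a \cdot d(\w[a+1]).$$
The opening claim $d(\w[a]) < d(\w[b])$ is immediate from the first identity: if $c$ is the least index with $\w_{a+c} \neq \w_{b+c}$, then $d(\w)[2a]$ and $d(\w)[2b]$ agree through position $2c-1$ and disagree at position $2c$ with the same sign as $\w[a]$ versus $\w[b]$. Next, when $i < k_0$, the prefix $C_i$ forces $\w[a]$ to begin with exactly $m_i := k_0 - i$ zeros followed by a $1$ (the $1$ after $C_0 = 0^{k_0}$ is forced because $\w$ has no $0^{k_0+1}$). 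Hence $d(\w)[2a]$ begins with exactly $2m_i$ zeros followed by a $1$, and $d(\w)[2a+1]$ begins with exactly $2m_i - 1$ zeros followed by a $1$. A symmetric statement with $n_i := i - k_0 + 1$ leading ones applies when $i \geq k_0$.

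With these two ingredients the five cases fall out quickly. In cases $(a)$ and $(d)$ the four leading-letter counts are the four distinct integers $2m_i, 2m_i - 1, 2m_j, 2m_j - 1$ (or their $n$-analogues), and the claimed ordering is read off directly, since more leading zeros means smaller and more leading ones means larger. Case $(c)$ is easy: the two $a$-shifts begin with $0$ and the two $b$-shifts begin with $1$, and within each pair the leading-letter counts settle the comparison. The subtle cases are $(b)$ and $(e)$, where the $a$- and $b$-shifts have equal leading-letter counts; there one obtains $d(\w)[2a] < d(\w)[2b]$ from order-preservation of $d$ applied to $\w[a] < \w[b]$, and obtains $d(\w)[2a+1] < d(\w)[2b+1]$ by first invoking Claim~$\ref{PCClaim01}(3)$ (using $\w_a = \w_b$) to deduce $\w[a+1] < \w[b+1]$ and then applying order-preservation again, while the cross-pair comparison ($d(\w)[2b]$ versus $d(\w)[2a+1]$ in case $(b)$, and $d(\w)[2b+1]$ versus $d(\w)[2a]$ in case $(e)$) is determined by leading-letter counts differing by exactly one.

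No single step is conceptually difficult; the main obstacle is the bookkeeping required to verify in each case that the ordering assembled from leading-letter counts and from order-preserving reductions matches the full chain claimed in the lemma. A minor pitfall worth stating explicitly is the edge behaviour at $C_0 = 0^{k_0}$ and $C_{k_0+k_1-1} = 1^{k_1}$, where the character immediately after the class-prefix is not written in the class symbol but is forced by uniform recurrence; this must be invoked when asserting the precise leading-letter counts.
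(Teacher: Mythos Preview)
Your proposal is correct and follows essentially the same approach as the paper: both proofs compute the exact number of leading $0$'s or $1$'s in each of the four shifts from the class $C_i$, settle cases $(a)$, $(c)$, $(d)$ by comparing those counts, and handle the tied-count cases $(b)$, $(e)$ by combining the count comparison with order-preservation of $d$. Your version is in fact slightly more careful in cases $(b)$ and $(e)$, where you explicitly invoke Claim~\ref{PCClaim01}(3) to pass from $\w[a] < \w[b]$ to $\w[a+1] < \w[b+1]$ before applying order-preservation to conclude $d(\w)[2a+1] < d(\w)[2b+1]$; the paper asserts this inequality directly under the umbrella of ``$d$ is order preserving'' without spelling out that intermediate step.
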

\begin{proof}
Since $\w[a] < \w[b]$, there is some (possibly empty) factor $u$ of $\w$ so that 
$$ \w[a] = u0 \cdots $$
$$ \w[b] = u1 \cdots $$
and thus 
$$ d(\w[a]) = d(u)00 \cdots $$
$$ d(\w[b]) = d(u)11 \cdots $$
so $d(\w[a]) < d(\w[b])$ and $d$ is an order preserving map.

Each of the cases will be looked at independently.

$\vspace{1.0ex}$

$\textbf{(a)}$  Suppose $\w_a = \w_b = 0$ and $i < j$. Since both $\w[a]$ and $\w[b]$ start with 0, $\w[a]$ has $0^{k_0 -i}1$ as a prefix and $\w[b]$ has $0^{k_0 -j}1$ as a prefix.  Thus $d(\w)[2a]$ has $0^{2(k_0 -i)}1$ as a prefix and $d(\w)[2b]$ has $0^{2(k_0 -j)}1$ as a prefix, and 
$$ 0^{2(k_0 -i)}1 < 0^{2(k_0 -i)-1}1 < 0^{2(k_0 -j)}1 < 0^{2(k_0 -j)-1}1. $$

$\vspace{1.0ex}$

$\textbf{(b)}$  Suppose $\w_a = \w_b = 0$ and $i = j$. Since both $\w[a]$ and $\w[b]$ start with 0, for $n = k_0 -i$, $\w[a]$ and $\w[b]$ have $0^{n}1$ as a prefix.  Thus $d(\w)[2a]$ and $d(\w)[2b]$ have $0^{2n}1$ as a prefix.  Since $\w[a] < \w[b]$ is given and $d$ is an order preserving map we have 
\begin{align*}
d(\w)[2a] &< d(\w)[2b] \\
d(\w)[2a+1] &< d(\w)[2b+1] \\
0^{2n}1 &< 0^{2n-1}1.
\end{align*}
Thus $d(\w)[2a] < d(\w)[2b] < d(\w)[2a+1] < d(\w)[2b+1]$.

$\vspace{1.0ex}$

$\textbf{(c)}$  Suppose $\w_a = 0$ and  $\w_b = 1$, so $i < j$. Since $\w[a]$ start with 0 and $\w[b]$ start with 1, there are numbers $n$ and $m$ so that $\w[a]$ has $0^n1$ as a prefix and $\w[b]$ has $1^m0$ as a prefix.  Thus $d(\w)[2a]$ has $0^{2n}1$ as a prefix and $d(\w)[2b]$ has $1^{2m}0$ as a prefix, and 
$$ 0^{2n}1 < 0^{2n-1}1 < 1^{2m-1}0 < 1^{2m}0. $$ 

$\vspace{1.0ex}$

$\textbf{(d)}$  Suppose $\w_a = \w_b = 1$ and $i < j$. Since both $\w[a]$ and $\w[b]$ start with 1, $\w[a]$ has $1^{i-k_0+1}0$ as a prefix and $\w[b]$ has $1^{j-k_0+1}0$ as a prefix.  Thus $d(\w)[2a]$ has $1^{2(i-k_0)+2}0$ as a prefix and $d(\w)[2b]$ has $1^{2(j-k_0)+2}0$ as a prefix, and 
$$ 1^{2(i-k_0)+1}0 < 1^{2(i-k_0)+2}0 < 1^{2(j-k_0)+1}0 < 1^{2(j-k_0)+2}0. $$ 

$\vspace{1.0ex}$

$\textbf{(e)}$  Suppose $\w_a = \w_b = 1$ and $i = j$. Since both $\w[a]$ and $\w[b]$ start with 1, for $n = i-k_0+1$, $\w[a]$ and $\w[b]$ have $1^{n}0$ as a prefix.  Thus $d(\w)[2a]$ and $d(\w)[2b]$ have $1^{2n}0$ as a prefix.  Since $\w[a] < \w[b]$ is given and $d$ is an order preserving map we have 
\begin{align*}
d(\w)[2a] &< d(\w)[2b] \\
d(\w)[2a+1] &< d(\w)[2b+1] \\
1^{2n - 1}0 &< 1^{2n}0.
\end{align*}
Thus $d(\w)[2a+1] < d(\w)[2b+1] < d(\w)[2a] < d(\w)[2b]$.
$\qed$
\end{proof}

For $k = \sup \{ k_0,k_1 \}$, there is an $N_k$ so any factor $u$ of $\w$ of length $n \geq N_k$ will contain all factors of length $k$ as a subword, and so $u$ will have $C_j$ as a subword for each $j$.  One note about the factors of $d(\w)$.  For $n \geq N_k$ and two factors $u = d(\w)[2x,2x+2n]$ and $v = d(\w)[2y+1,2y+2n+1]$ of $d(\w)$, then $u \neq v$.  This is because a prefix of $u$ will begin with an even number of one letter (either $0^{2m}1$ or $1^{2m}0$ for some $m$), and a prefix of $v$ will begin with an odd number of one letter (either $0^{2m+1}1$ or $1^{2m+1}0$ for some $m$).

Let $u$ be a factor of $\w$ of length $n \geq N_k$.  There is an $a$ so that $u = \w[a,a+n-1]$.  For each $0 \leq i \leq n-1$ there is one $j$ so that $\w[a+i]$ has $C_j$ as a prefix.  In the factor $\w[a,a+n+k-2]$ of length $n+k-1$, we will know explicitly which $C_j$ is a prefix of the shift $\w[a+i]$ for each $0 \leq i \leq n-1$.  Let $p = \pi_\w[a,a+n+k-1]$ be a subpermutation of $\pi_\w$ of length $n+k$.  The factor $\w[a,a+n+k-2]$ of length $n+k-1$ is the form of $p$, and has $u$ as a prefix.  

For each $j \in \{0, 1, \ldots, k_0+k_1-1  \}$ define
$$ \gamma_j = \left\{ \hspace{0.5ex} 0 \leq i \leq n-1 \hspace{0.5ex} \left| \hspace{0.5ex} C_j \text{ is a prefix of } \w[a+i] \hspace{0.5ex} \right.\right\}. $$
So $\abs{\gamma_0} + \abs{\gamma_1} + \cdots + \abs{\gamma_{k_0+k_1-1}} = n$ and $\gamma_i \cap \gamma_j = \emptyset$ for $i \neq j$.  Since $\abs{u} \leq N_k$, we know $\abs{\gamma_j} \geq 1$ for each $j$.  We can see $d(u) = d(\w)[2a, 2a+2n-1]$, and let $p'$ be the subpermutation $p'= \pi_{d(\w)}[2a, 2a+2n-1]$.  Using Lemma $\ref{ImageOfTheCiClasses}$ and the size of each of the $\gamma_j$ sets we can determine the values of $p'$ based on the values of $L^k(p)$, the $k$-left restriction of $p$.  For each $j \in \{0, 1, \ldots, k_0+k_1-1  \}$ define
$$ S_j = \sum_{i=0}^j \abs{\gamma_i} $$
and say $S_{-1} = 0$.

\begin{proposition}
\label{CalcTheFwdImage}
Let $\w$, $u$, $p$, and $p'$ be as above.  For each $0 \leq i \leq n-1$, there is a $j$ so $\w[a+i]$ has $C_j$ as a prefix. 
\begin{itemize}
\item[(a)] If $p_i < p_{i+1}$ then $p'_{2i} = L^{k}(p)_i + S_{j-1}$ and $p'_{2i+1} = L^{k}(p)_i + S_{j}$
\item[(b)] If $p_i > p_{i+1}$ then $p'_{2i} = L^{k}(p)_i + S_{j} $ and $p'_{2i+1} = L^{k}(p)_i + S_{j-1}$
\end{itemize}
\end{proposition}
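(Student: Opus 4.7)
The plan is to read $p'_{2i}$ and $p'_{2i+1}$ as lexicographic ranks of $d(\w)[2a+2i]$ and $d(\w)[2a+2i+1]$ among the $2n$ shifts $\{d(\w)[2a+\ell]:0\leq\ell\leq 2n-1\}$, and to use Lemma \ref{ImageOfTheCiClasses} to sort those shifts into ordered blocks indexed by the class $C_j$ of the underlying $\w$-shift. Since the form of $p$ has length $n+k-1$, Claim \ref{PCClaim01} recovers $\w[a,a+n+k-2]$ from $p$, which is long enough to determine the class $C_{j(\ell)}$ of $\w[a+\ell]$ for every $0\leq\ell\leq n-1$; hence the sets $\gamma_0,\dots,\gamma_{k_0+k_1-1}$ and the partial sums $S_j$ are intrinsic to $p$.

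The structural step, which I expect to be the main obstacle, is to verify via Lemma \ref{ImageOfTheCiClasses} that the total lex-order on these $2n$ shifts respects the block decomposition by class. The cross-class cases (a), (c), (d) force every shift coming from a position in $\gamma_{j'}$ to precede every shift coming from a position in $\gamma_j$ whenever $j'<j$. The intra-class cases (b) and (e) give the finer structure: for $j<k_0$, all $\abs{\gamma_j}$ even-indexed shifts $d(\w)[2a+2\ell]$ with $\ell\in\gamma_j$ precede all $\abs{\gamma_j}$ odd-indexed ones, and inside each half the relative order matches the $\pi_\w$-order on $\{\w[a+\ell]:\ell\in\gamma_j\}$; for $j\geq k_0$ the roles of even and odd swap but the intra-half $\pi_\w$-order is still preserved. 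The analogous cross-class argument restricted to the $n$ original positions also yields the identity $L^k(p)_i = S_{j-1}+r$, where $j$ is the class of $\w[a+i]$ and $r$ is the $\pi_\w$-rank of $a+i$ inside $\{a+\ell:\ell\in\gamma_j\}$.

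Granted this block picture, the counting is pure bookkeeping. In case (a), $\w_{a+i}=0$ and $j<k_0$, so at or below $d(\w)[2a+2i]$ sit the $2S_{j-1}$ shifts coming from $C_0,\dots,C_{j-1}$ together with the $r$ even-half shifts of $C_j$ up to and including $d(\w)[2a+2i]$, giving $p'_{2i}=2S_{j-1}+r=S_{j-1}+L^k(p)_i$; at or below $d(\w)[2a+2i+1]$ one additionally picks up every one of the $\abs{\gamma_j}$ even-half shifts of $C_j$, yielding $p'_{2i+1}=2S_{j-1}+\abs{\gamma_j}+r=S_j+L^k(p)_i$. Case (b), where $\w_{a+i}=1$ and $j\geq k_0$, is the exact mirror: the odd half of $C_j$ now leads the even half, and the same tally with the roles of $2i$ and $2i+1$ interchanged produces $p'_{2i+1}=S_{j-1}+L^k(p)_i$ and $p'_{2i}=S_j+L^k(p)_i$. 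Once the block structure is correctly identified, both cases collapse to summing $\abs{\gamma_\ell}$'s, so the arithmetic is routine and the only place genuine work is needed is in pinning down that the intra-half order is the $\pi_\w$-order and not its reverse — which is exactly what the equality-class statements in Lemma \ref{ImageOfTheCiClasses} deliver.
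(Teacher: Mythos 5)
Your proposal is correct and follows essentially the same route as the paper: both rest on Lemma \ref{ImageOfTheCiClasses} to compare even- and odd-indexed shifts of $d(\w)$ via the classes $C_j$, and both reduce the rank of $p'_{2i}$ and $p'_{2i+1}$ to $L^k(p)_i$ plus the partial sums $S_{j-1}$ or $S_j$ of the $\abs{\gamma_m}$. The only difference is presentational: you first assemble the full sorted order of all $2n$ shifts into class-and-parity blocks and read off ranks, whereas the paper counts, for each target index separately, the even-indexed and odd-indexed shifts lying below it; the two tallies agree.
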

\begin{proof}
Let $0 \leq i \leq n-1$ and suppose that $C_j$ is a prefix of $\w[a+i]$ for some $0 \leq j \leq k_0+k_1-1$.  

$\vspace{1.0ex}$

$\textbf{(a)}$  Suppose $p_i < p_{i+1}$, and so $\w_{a+i} = u_i = 0$.  

For $p'_{2i}$, there are $L^k(p)_i -1$ many $h$ so that $p_i > p_h$, and thus $L^k(p)_i - 1$ many $h$ so that $p'_{2i} > p'_{2h}$.  Likewise there are $n - L^k(p)_i$ many $h$ so that $p'_{2i} < p'_{2h}$.  By Lemma $\ref{ImageOfTheCiClasses}$ if $m < j$ and $h \in \gamma_m$ then $p'_{2i} > p'_{2h+1}$, and if $m \geq j$ and $h \in \gamma_m$ then $p'_{2i} < p'_{2h+1}$.  Thus there are $S_{j-1}$ many $h$ so that $p'_{2i} > p'_{2h+1}$, and likewise there are $n-S_{j-1}$ many $h$ so that $p'_{2i} < p'_{2h+1}$.  Therefore there are exactly $L^k(p)_i - 1 + S_{j-1}$ many $h$ so that $p'_{2i} > p'_h$, and
$$p'_{2i} = L^k(p)_i - 1 + S_{j-1} + 1 = L^k(p)_i + S_{j-1}.$$

For $p'_{2i+1}$, since there are $L^k(p)_i -1$ many $h$ so that $p_i > p_h$, there are $L^k(p)_i - 1$ many $h$ so that $p'_{2i} > p'_{2h}$ and $p'_{2i+1} > p'_{2h+1}$.  Likewise there are $n - L^k(p)_i$ many $h$ so that $p'_{2i+1} < p'_{2h+1}$.  By Lemma $\ref{ImageOfTheCiClasses}$ if $m \leq j$ and $h \in \gamma_m$ then $p'_{2i+1} > p'_{2h}$, and if $m > j$ and $h \in \gamma_m$ then $p'_{2i+1} < p'_{2h}$.  Thus there are $S_{j}$ many $h$ so that $p'_{2i+1} > p'_{2h}$, and likewise there are $n-S_{j}$ many $h$ so that $p'_{2i+1} < p'_{2h}$.  Therefore there are exactly $L^k(p)_i - 1 + S_{j}$ many $h$ so that $p'_{2i} > p'_h$, so
$$p'_{2i+1} = L^k(p)_i - 1 + S_{j} + 1 = L^k(p)_i + S_{j}.$$

$\vspace{1.0ex}$

$\textbf{(b)}$  Suppose $p_i > p_{i+1}$, and so $\w_{a+i} = u_i = 1$.

For $p'_{2i}$, there are $L^k(p)_i -1$ many $h$ so that $p_i > p_h$, and thus $L^k(p)_i - 1$ many $h$ so that $p'_{2i} > p'_{2h}$.  Likewise there are $n - L^k(p)_i$ many $h$ so that $p'_{2i} < p'_{2h}$.  By Lemma $\ref{ImageOfTheCiClasses}$ if $m \leq j$ and $h \in \gamma_m$ then $p'_{2i} > p'_{2h+1}$, and if $m > j$ and $h \in \gamma_m$ then $p'_{2i} < p'_{2h+1}$.  Thus there are $S_{j}$ many $h$ so that $p'_{2i} > p'_{2h+1}$, and likewise there are $n-S_{j}$ many $h$ so that $p'_{2i} < p'_{2h+1}$.  Therefore there are exactly $L^k(p)_i - 1 + S_{j}$ many $h$ so that $p'_{2i} > p'_h$, and
$$p'_{2i} = L^k(p)_i - 1 + S_{j} + 1 = L^k(p)_i + S_{j}.$$

For $p'_{2i+1}$, since there are $L^k(p)_i -1$ many $h$ so that $p_i > p_h$, there are $L^k(p)_i - 1$ many $h$ so that $p'_{2i} > p'_{2h}$ and $p'_{2i+1} > p'_{2h+1}$.  Likewise there are $n - L^k(p)_i$ many $h$ so that $p'_{2i+1} < p'_{2h+1}$.  By Lemma $\ref{ImageOfTheCiClasses}$ if $m < j$ and $h \in \gamma_m$ then $p'_{2i+1} > p'_{2h}$, and if $m \geq j$ and $h \in \gamma_m$ then $p'_{2i+1} < p'_{2h}$.  Thus there are $S_{j-1}$ many $h$ so that $p'_{2i+1} > p'_{2h}$, and likewise there are $n-S_{j-1}$ many $h$ so that $p'_{2i+1} < p'_{2h}$.  Therefore there are exactly $L^k(p)_i - 1 + S_{j-1}$ many $h$ so that $p'_{2i+1} > p'_h$, and
$$p'_{2i+1} = L^k(p)_i - 1 + S_{j-1} + 1 = L^k(p)_i + S_{j-1}.$$
$\qed$
\end{proof}

The following corollaries show some nice properties that follow from Proposition $\ref{CalcTheFwdImage}$.  The first corollary ($\ref{Cor_pNEQq_SameFormAndSameRest}$) gives an example of when distinct subpermutations of $\pi_\w$ will lead to the same subpermutation of $\pi_{d(\w)}$.  The next corollary ($\ref{Corr_uNEQv}$) shows when two subpermutations of $\pi_\w$ will definitely lead to distinct subpermutations of $\pi_{d(\w)}$.
\begin{corollary}
\label{Cor_pNEQq_SameFormAndSameRest}
Let $\w$ be as defined above.  If $\pi_\w[a,a+n+k-1]$ and $\pi_\w[b,b+n+k-1]$, $a \neq b$, are subpermutations of $\pi_\w$ where $\pi_\w[a,a+n-1] = \pi_\w[b,b+n-1]$ and for each $0 \leq i \leq n-1$, there is some $j$ so that both $\w[a+i]$ and $\w[b+i]$ have $C_j$ as a prefix.  Then $\pi_{d(\w)}[2a,2a+2n-1] = \pi_{d(\w)}[2b,2b+2n-1]$.
\end{corollary}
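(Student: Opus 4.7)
The plan is to apply Proposition~\ref{CalcTheFwdImage} twice, once to each subpermutation, and observe that every input to the proposition's formulas is the same for both. Set $p = \pi_\w[a,a+n+k-1]$ and $q = \pi_\w[b,b+n+k-1]$, and let $p' = \pi_{d(\w)}[2a,2a+2n-1]$ and $q' = \pi_{d(\w)}[2b,2b+2n-1]$. By hypothesis $L^k(p) = \pi_\w[a,a+n-1]$ equals $\pi_\w[b,b+n-1] = L^k(q)$, so coordinate-wise $L^k(p)_i = L^k(q)_i$ for every $0 \le i \le n-1$.

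Next I would check that the $C_j$-class bookkeeping is identical on both windows. The assumption says that for each $0 \le i \le n-1$ there is a single $j = j(i)$ such that both $\w[a+i]$ and $\w[b+i]$ have $C_j$ as a prefix. Consequently the index sets $\gamma_j$ defined in Section~\ref{SecUnifRecWords} agree for $p$ and $q$, and therefore so do the partial sums $S_j$. The class $C_j$ also determines its initial letter, so $\w_{a+i} = \w_{b+i}$ for all such $i$, and by Claim~\ref{PCClaim01} this gives $p_i < p_{i+1}$ if and only if $q_i < q_{i+1}$.

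Finally I would plug these facts into Proposition~\ref{CalcTheFwdImage}. Its formulas express $p'_{2i}$ and $p'_{2i+1}$ purely in terms of $L^k(p)_i$, the neighbouring $S_{j-1}$ and $S_j$, and the direction of $p_i$ vs.\ $p_{i+1}$; since all four data match for $q$, the same formulas yield $p'_{2i} = q'_{2i}$ and $p'_{2i+1} = q'_{2i+1}$ for every $0 \le i \le n-1$. Two permutations of $\{1, 2, \ldots, 2n\}$ whose values agree at every position are equal, so $\pi_{d(\w)}[2a,2a+2n-1] = \pi_{d(\w)}[2b,2b+2n-1]$, as claimed.

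I do not anticipate any real obstacle; the corollary is essentially a clean repackaging of Proposition~\ref{CalcTheFwdImage}. The only mild subtlety is that the sign of $p_i$ vs.\ $p_{i+1}$ invoked by the proposition may compare a coordinate $i+1$ lying in the ``extra $k$'' portion of the window, which is not part of $L^k(p)$. This causes no problem, because what the comparison actually detects is the single letter $\w_{a+i}$, and that letter is pinned down by the $C_{j(i)}$-class hypothesis rather than by $L^k(p)$.
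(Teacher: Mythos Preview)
Your proposal is correct and follows essentially the same approach as the paper: both proofs simply observe that the formulas of Proposition~\ref{CalcTheFwdImage} depend only on $L^k(p)_i$, the class index $j$ (hence the sums $S_{j-1}$, $S_j$), and the sign of $p_i$ versus $p_{i+1}$, all of which are forced to agree by the hypotheses. Your write-up is in fact more explicit than the paper's terse one-sentence proof, spelling out why the $\gamma_j$ sets, the $S_j$ sums, and the up/down pattern match; the remark about the ``extra $k$'' portion is a nice clarification but not strictly needed.
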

\begin{proof}
Let $p = \pi_\w[a,a+n+k-1]$ and $q = \pi_\w[b,b+n+k-1]$, $a \neq b$, with $p$ and $q$ as in the hypothesis.  For each $0 \leq i \leq n-1$, $L^k(p)_i = L^k(q)_i$ and each of $\w[a+i]$ and $\w[b+i]$ have the same $C_j$ as a prefix for some $j$, so $p'_{2i} = q'_{2i}$ and $p'_{2i+1} = q'_{2i+1}$ by Proposition $\ref{CalcTheFwdImage}$ so $p' = q'$.
$\qed$
\end{proof}

\begin{corollary}
\label{Corr_uNEQv}
Let $\w$ be as defined above.  If $p = \pi_\w[a,a+n+k-1]$ and $q = \pi_\w[a,a+n+k-1]$ are subpermutations of $\pi_\w$ where one of the following conditions is true
\begin{itemize}
\item[(a)] $\w[a,a+n-1] \neq \w[b,b+n - 1]$
\item[(b)] $L^k(p) \neq L^k(q)$
\end{itemize}
then $p' \neq q'$.
\end{corollary}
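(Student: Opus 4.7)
The plan is to prove the contrapositive: assuming $p' = q'$, I will show that both conditions fail, i.e., $\w[a,a+n-1] = \w[b,b+n-1]$ and $L^k(p) = L^k(q)$. Both facts will be extracted directly from Proposition \ref{CalcTheFwdImage}, applied separately at the same index $i$ to $p$ at position $a+i$ and to $q$ at position $b+i$.

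To negate (a), I read off Proposition \ref{CalcTheFwdImage} that $\{p'_{2i}, p'_{2i+1}\} = \{L^k(p)_i + S_{j-1},\ L^k(p)_i + S_j\}$, where $C_j$ is the class that prefixes $\w[a+i]$, and the order between the two values is determined by $\w_{a+i}$: the smaller value sits at position $2i$ when $\w_{a+i}=0$, and at position $2i+1$ when $\w_{a+i}=1$. Because every $C_j$ appears as a prefix of some shift $\w[a+i]$ with $0 \leq i \leq n-1$ (the hypothesis $n \geq N_k$ forces $\abs{\gamma_j} \geq 1$ for each $j$), the inequality $S_{j-1} < S_j$ is strict, so the local comparison $p'_{2i}$ versus $p'_{2i+1}$ identifies $\w_{a+i}$ unambiguously. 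The same reasoning identifies $\w_{b+i}$ from the local comparison of $q'_{2i}$ versus $q'_{2i+1}$, and then $p' = q'$ forces $\w_{a+i} = \w_{b+i}$ for all $i = 0, 1, \ldots, n-1$.

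To negate (b), I invoke the counting identity that appears in the proof of Proposition \ref{CalcTheFwdImage}: for each $0 \leq i \leq n-1$, there are exactly $L^k(p)_i - 1$ indices $h \in \{0, 1, \ldots, n-1\}$ with $p'_{2i} > p'_{2h}$ (this uses that the doubling map $d$ is order preserving by Lemma \ref{ImageOfTheCiClasses}, so the relative ordering of the even-indexed entries of $p'$ matches the relative ordering of $L^k(p)$). Since $p' = q'$, the set $\{h : p'_{2i} > p'_{2h}\}$ coincides with $\{h : q'_{2i} > q'_{2h}\}$, so $L^k(p)_i = L^k(q)_i$ for every $i$, giving $L^k(p) = L^k(q)$.

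The only delicate point I anticipate is justifying the strict inequality $S_{j-1} < S_j$ that underpins the first half of the argument; this is precisely what the standing assumption $n \geq N_k$ buys us, since it guarantees that each of the finitely many prefix-classes $C_0, C_1, \ldots, C_{k_0+k_1-1}$ occurs at least once within the window $\w[a,a+n-1]$. Everything else is a direct application of the formulas and counts already established in Proposition \ref{CalcTheFwdImage}, so I expect the write-up to be short.
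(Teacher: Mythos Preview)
Your proposal is correct and follows essentially the same approach as the paper, just framed as the contrapositive: the paper directly exhibits a disagreement between $p'$ and $q'$ at a single index (the comparison $p'_{2i}$ versus $p'_{2i+1}$ for (a), and $p'_{2i}$ versus $p'_{2j}$ for (b)), while you argue that $p'=q'$ forces agreement everywhere. Incidentally, your ``delicate point'' about $S_{j-1}<S_j$ is automatic, since the class $C_j$ prefixing $\w[a+i]$ has $i\in\gamma_j$ by definition, so $\abs{\gamma_j}\ge 1$ without any appeal to $n\ge N_k$.
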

\begin{proof}

$\textbf{(a)}$  Since $\w[a,a+n-1] \neq \w[b,b+n-1]$, then there is an $0 \leq i \leq n-1$ so that, without loss of generality, $\w_{a+i} = 0$ and $\w_{b+i} = 1$.  Thus $d(\w)[2a+2i, 2a+2i+1] = 00$ and $d(\w)[2b+2i, 2b+2i+1] = 11$ so $p'_{2i} < p'_{2i+1}$ and $q'_{2i} > q'_{2i+1}$, and $p' \neq q'$.


$\textbf{(b)}$  Since $L^k(p) \neq L^k(q)$, then there are $0 \leq i,j \leq n-1$, $i \neq j$, so that, without loss of generality, $L^k(p)_i < L^k(p)_j$ and $L^k(q)_i > L^k(q)_j$, so $\w[a+i] < \w[a+j]$ and $\w[b+i] > \w[b+j]$.  Thus $d(\w)[2a+2i] < d(\w)[2a+2j]$ and $d(\w)[2b+2i] > d(\w)[2b+2j]$ so $p'_{2i} < p'_{2j}$ and $q'_{2i} > q'_{2j}$, and $p' \neq q'$.
$\qed$
\end{proof}

Fix a subpermutation $p = \pi_\w[a,a+n+k-1]$, and let $p' = \pi_{d(\w)}[2a,2a+2n-1]$.  The terms of $p'$ can be defined using the method given in Proposition $\ref{CalcTheFwdImage}$.  Let $q=\pi_\w[b,b+n+k-1]$, $b \neq a$, be a subpermutation of $\pi_\w$ and let $q'=\pi_{d(\w)}[2b,2b+2n-1]$ as in Proposition $\ref{CalcTheFwdImage}$.  The following lemma shows that if $p=q$ we know $p' = q'$, but the converse of this is not necessarily true.  The objective here is using the idea of $p'$ to define a map from the set of subpermutations of $\pi_\w$ to the set of subpermutations of $\pi_{d(\w)}$, and this map will be well-defined by Proposition $\ref{CalcTheFwdImage}$.

\begin{lemma}
\label{pISqTHENppISqp}
If $p = q$, then $p' = q'$.
\end{lemma}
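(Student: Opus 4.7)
The strategy is to show that both ingredients feeding Proposition \ref{CalcTheFwdImage} --- the left restriction $L^k(\cdot)$ and the class $C_j$ attached to each relevant shift --- are forced to coincide for $p$ and $q$ once $p=q$. Since Proposition \ref{CalcTheFwdImage} expresses $p'_{2i}$ and $p'_{2i+1}$ purely in terms of $L^k(p)_i$, the comparison $p_i$ vs.\ $p_{i+1}$, and the partial sums $S_{j-1},S_j$ of the class sizes, matching both ingredients will immediately give $p'=q'$.

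First I would note that $p=q$ trivially yields $L^k(p)=L^k(q)$, and that for every $0\le i\le n+k-2$ we have $p_i<p_{i+1}\Leftrightarrow q_i<q_{i+1}$. Applying Claim \ref{PCClaim01} in both directions, this means $\w_{a+i}=\w_{b+i}$ for all such $i$, i.e., the factors $\w[a,a+n+k-2]$ and $\w[b,b+n+k-2]$ are identical. This is the step that justifies carrying the extra ``$+k$'' positions in the subpermutation $p$: they are exactly what is needed to determine the class prefixes.

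Next, for each $0\le i\le n-1$, the class $C_j$ that prefixes $\w[a+i]$ depends only on the first $k$ letters of $\w[a+i]$, because every $C_j$ has length at most $k=\sup\{k_0,k_1\}$. Those $k$ letters, namely $\w[a+i,a+i+k-1]$, lie inside the already-matched factor $\w[a,a+n+k-2]$ and agree with $\w[b+i,b+i+k-1]$. Consequently the same $C_j$ is a prefix of $\w[a+i]$ and of $\w[b+i]$ for every $i$, so the sets $\gamma_0,\dots,\gamma_{k_0+k_1-1}$ defined from $p$ have the same cardinalities as those defined from $q$, and therefore $S_{j-1}$ and $S_j$ are the same in both cases.

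Finally I would apply Proposition \ref{CalcTheFwdImage} pointwise: for each $0\le i\le n-1$, the relation between $p_i$ and $p_{i+1}$ agrees with that between $q_i$ and $q_{i+1}$ (selecting the same case (a) or (b)), $L^k(p)_i=L^k(q)_i$, and the relevant $S$-values are equal, so $p'_{2i}=q'_{2i}$ and $p'_{2i+1}=q'_{2i+1}$. Hence $p'=q'$. The only subtlety is the bookkeeping in the previous paragraph confirming that $k$ extra letters suffice to pin down the class $C_j$; once that observation is made, the rest is a direct appeal to results already proved.
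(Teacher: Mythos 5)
Your proposal is correct and follows essentially the same route as the paper: from $p=q$ you get $L^k(p)=L^k(q)$ and, via the form, equality of the underlying factors, hence matching $C_j$-prefixes for each shift, and then you conclude by the formula of Proposition \ref{CalcTheFwdImage}. The only cosmetic difference is that the paper packages the final step as an appeal to Corollary \ref{Cor_pNEQq_SameFormAndSameRest} while you apply Proposition \ref{CalcTheFwdImage} directly (and you spell out, slightly more carefully than the paper, why the $k$ extra positions suffice to pin down each class prefix).
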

\begin{proof}
Suppose that $p=q$.  So $p_i = q_i$ and thus $L^k(p)_i = L^k(q)_i$ for each $0 \leq i \leq n-1$.  Since $p=q$, $p$ and $q$ have the same form, so $\w[a,a+n+k-1] = \w[b,b+n+k-1]$ and if $\w[a+i]$ has $C_j$ as a prefix, for some $j$, then $\w[b+i]$ has $C_j$ as a prefix as well.  Thus by Corollary $\ref{Cor_pNEQq_SameFormAndSameRest}$, $p' = q'$.
$\qed$
\end{proof}

%
%
Thus there is a well-defined function from the set of subpermutations of $\pi_\w$ to the set of subpermutations of $\pi_{d(\w)}$.  Let $p = \pi_\w[a,a+n+k-1]$, and define $\delta(p) = p' = \pi_{d(\w)}[2a,2a+2n-1]$ using the formula in Proposition $\ref{CalcTheFwdImage}$.  Thus we have the map 
$$\delta : \mathrm{Perm}^\w(n+k) \rightarrow \mathrm{Perm}^{d(\w)}(2n)$$
Not all subpermutations of $\pi_\w$ will be the image under $\delta$ of another subpermutation.

Let $n > 2N_k$ and $a$ be natural numbers.  Then $n$ and $a$ can be either even or odd, and for the subpermutation $\pi_{d(\w)}[a,a+n-1]$, there exist natural numbers $b$ and $m$ so that one of 4 cases hold:
\begin{enumerate}
\item $\pi_{d(\w)}[a,a+n] = \pi_{d(\w)}[2b,2b+2m]$, even starting position with odd length.
\item $\pi_{d(\w)}[a,a+n] = \pi_{d(\w)}[2b,2b+2m-1]$, even starting position with even length.
\item $\pi_{d(\w)}[a,a+n] = \pi_{d(\w)}[2b+1,2b+2m]$, odd starting position with even length.
\item $\pi_{d(\w)}[a,a+n] = \pi_{d(\w)}[2b+1,2b+2m-1]$, odd starting position with odd length.
\end{enumerate}

Consider two subpermutations $\pi_{d(\w)}[2c, 2c+n]$ and $\pi_{d(\w)}[2d+1, 2d+n+1]$, with $n > 2N_k$.  The subpermutation $\pi_{d(\w)}[2c, 2c+n]$ will have form $d(\w)[2c, 2c+n-1]$, and $\pi_{d(\w)}[2d+1, 2d+n+1]$ will have form $d(\w)[2d+1, 2d+n]$.  Since the length of these factors is at least $2N_k$, we know $d(\w)[2c, 2c+n-1] \neq d(\w)[2d+1, 2d+n]$, and thus $\pi_{d(\w)}[2c, 2c+n] \neq \pi_{d(\w)}[2d+1, 2d+n+1]$ because they do not have the same form.  Thus we can break up the set $\mathrm{Perm}^{d(\w)}(n)$ into two classes of subpermutations, namely the subpermutations that start at an even position or an odd position.  So say that $\mathrm{Perm}^{d(\w)}_{ev}(n)$ is the set of subpermutations $p$ of length $n$ so that $p = \pi_{d(\w)}[2b,2b+n-1]$ for some $b$, and that $\mathrm{Perm}^{d(\w)}_{odd}(n)$ is the set of subpermutations $p$ of length $n$ so that $p = \pi_{d(\w)}[2b+1,2b+n]$ for some $b$.  Thus
$$\mathrm{Perm}^{d(\w)}(n) = \mathrm{Perm}^{d(\w)}_{ev}(n) \cup \mathrm{Perm}^{d(\w)}_{odd}(n),$$
where   
$$\mathrm{Perm}^{d(\w)}_{ev}(n) \cap \mathrm{Perm}^{d(\w)}_{odd}(n) = \emptyset.$$

Thus for $n \geq N_k$, $\mathrm{Perm}^{d(\w)}_{ev}(2n)$ is the set of all subpermutations of length $2n$ starting at an even position.  So for $\pi_{d(\w)}[2a,2a+2n-1]$, we have the subpermutation $p = \pi_\w[a,a+n+k-1]$, and $\delta(p) = p' = \pi_{d(\w)}[2a,2a+2n-1]$.  Thus the map 
$$\delta : \mathrm{Perm}^\w(n+k) \mapsto \mathrm{Perm}^{d(\w)}_{ev}(2n)$$
is a surjective map.  

For $p=\pi_{d(\w)}[a,a+n+k-1]$, we can then define three additional maps by looking at the left, right, and middle restrictions of $\delta(p) = p'$.  These maps are
\begin{align*}
\delta_L  :  \mathrm{Perm}^\w(n+k) &\mapsto \mathrm{Perm}^{d(\w)}_{ev}(2n-1) \\
\delta_R : \mathrm{Perm}^\w(n+k) &\mapsto \mathrm{Perm}^{d(\w)}_{odd}(2n-1) \\
\delta_M : \mathrm{Perm}^\w(n+k) &\mapsto \mathrm{Perm}^{d(\w)}_{odd}(2n-2) 
\end{align*}
and are defined by
\begin{align*}
\delta_L(p) &= L(\delta(p)) = L(p')\\
\delta_R(p) &= R(\delta(p)) = R(p')\\
\delta_M(p) &= M(\delta(p)) = M(p')
\end{align*}
It can be readily verified that these three maps are surjective.  To see an example of this, consider the map $\delta_L$, and let $\pi_{d(\w)}[2b,2b+2n-2]$ be a subpermutation of $\pi_{d(\w)}$ in $\mathrm{Perm}^{d(\w)}_{ev}(2n-1)$.  Then for the subpermutation $p=\pi_\w[b,b+n+k-1]$, $\delta_L(p) = L(p') = \pi_{d(\w)}[2b,2b+2n-2]$ so $\delta_L$ is surjective.  A similar argument will show that $\delta_R$ and $\delta_M$ are also surjective.  

\begin{lemma}
\label{UpperBoundForTau}
For $n \geq N_k$:
\begin{align*}
\tau_{d(\w)}(2n-1) &\leq  2(\tau_\w(n+k)) \\
\tau_{d(\w)}(2n) &\leq \tau_\w(n+k) + \tau_\w(n+k+1) 
\end{align*}
\end{lemma}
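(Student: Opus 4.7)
The plan is to exploit the disjoint decomposition $\mathrm{Perm}^{d(\w)}(m) = \mathrm{Perm}^{d(\w)}_{ev}(m) \cup \mathrm{Perm}^{d(\w)}_{odd}(m)$ that was established just before the lemma (valid for $m \geq 2N_k$), and then bound each piece separately using the surjectivity of the four maps $\delta, \delta_L, \delta_R, \delta_M$ built from Proposition \ref{CalcTheFwdImage}. Since all four maps have domain $\mathrm{Perm}^\w(n+k)$, any surjection out of this set gives a target cardinality at most $\tau_\w(n+k)$, which is exactly what we want on the right-hand side.

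For the odd-length bound $\tau_{d(\w)}(2n-1)$, I would simply apply the two surjections whose codomains have length $2n-1$: the map $\delta_L : \mathrm{Perm}^\w(n+k) \to \mathrm{Perm}^{d(\w)}_{ev}(2n-1)$ gives $|\mathrm{Perm}^{d(\w)}_{ev}(2n-1)| \leq \tau_\w(n+k)$, and $\delta_R : \mathrm{Perm}^\w(n+k) \to \mathrm{Perm}^{d(\w)}_{odd}(2n-1)$ gives the same bound on the odd-start part. Adding the two contributions and using disjointness yields $\tau_{d(\w)}(2n-1) \leq 2\tau_\w(n+k)$ immediately.

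For the even-length bound $\tau_{d(\w)}(2n)$, the even-start part is handled by the surjection $\delta : \mathrm{Perm}^\w(n+k) \to \mathrm{Perm}^{d(\w)}_{ev}(2n)$, which contributes at most $\tau_\w(n+k)$. The small subtlety is the odd-start part $\mathrm{Perm}^{d(\w)}_{odd}(2n)$: the map $\delta_M$ as stated sends $\mathrm{Perm}^\w(n+k)$ into $\mathrm{Perm}^{d(\w)}_{odd}(2n-2)$, which is the wrong length by $2$. The fix is to re-index: replacing $n$ by $n+1$ in the definition of $\delta_M$ produces a surjection $\mathrm{Perm}^\w(n+k+1) \to \mathrm{Perm}^{d(\w)}_{odd}(2n)$, giving $|\mathrm{Perm}^{d(\w)}_{odd}(2n)| \leq \tau_\w(n+k+1)$. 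Summing the two contributions over the disjoint decomposition produces $\tau_{d(\w)}(2n) \leq \tau_\w(n+k) + \tau_\w(n+k+1)$.

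There is no real obstacle here beyond the bookkeeping of matching lengths correctly; the entire content is contained in the already-established surjectivity of $\delta, \delta_L, \delta_R, \delta_M$ and the disjointness of the even-start and odd-start classes. The one place to be careful is ensuring $n$ is large enough that the even/odd split is genuinely disjoint, which is guaranteed by the hypothesis $n \geq N_k$ (so that images of length $2n \geq 2N_k$ can be distinguished by whether their form starts with an even or odd run of a single letter, as argued in the discussion preceding the lemma).
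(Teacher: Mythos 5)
Your proposal is correct and takes essentially the same route as the paper: split $\mathrm{Perm}^{d(\w)}(m)$ into the even-start and odd-start classes and bound each piece by $\tau_\w(n+k)$ (or $\tau_\w(n+k+1)$) using the surjectivity of $\delta$, $\delta_L$, $\delta_R$, and $\delta_M$. The reindexing of $\delta_M$ that you flag is exactly what the paper does implicitly when it writes $\abs{\mathrm{Perm}^{d(\w)}_{odd}(2n)} \leq \abs{\mathrm{Perm}^\w(n+k+1)}$.
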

\begin{proof}
Let $n \geq N_k$.  We have:
\begin{align*}
\abs{\mathrm{Perm}^{d(\w)}_{ev}(2n-1)} &\leq \abs{\mathrm{Perm}^\w(n+k)} \\
\abs{\mathrm{Perm}^{d(\w)}_{odd}(2n-1)} &\leq \abs{\mathrm{Perm}^\w(n+k)} \\
\\
\abs{\mathrm{Perm}^{d(\w)}_{ev}(2n)} &\leq \abs{\mathrm{Perm}^\w(n+k)} \\
\abs{\mathrm{Perm}^{d(\w)}_{odd}(2n)} &\leq \abs{\mathrm{Perm}^\w(n+k+1)} 
\end{align*}
since the maps $\delta$, $\delta_L$, $\delta_R$, and $\delta_M$ are all surjective.  Thus we have the following inequalities:
\begin{align*}
\tau_{d(\w)}(2n-1) &= \abs{\mathrm{Perm}^{d(\w)}(2n-1)} = \abs{\mathrm{Perm}^{d(\w)}_{ev}(2n-1)} + \abs{\mathrm{Perm}^{d(\w)}_{odd}(2n-1)} \\
 &\leq \abs{\mathrm{Perm}^\w(n+k)} + \abs{\mathrm{Perm}^\w(n+k)} = 2(\tau_\w(n+k)) \\
\\
\tau_{d(\w)}(2n) &= \abs{\mathrm{Perm}^{d(\w)}(2n)} = \abs{\mathrm{Perm}^{d(\w)}_{ev}(2n)} + \abs{\mathrm{Perm}^{d(\w)}_{odd}(2n)} \\
 &\leq \abs{\mathrm{Perm}^\w(n+k)} + \abs{\mathrm{Perm}^\w(n+k+1)} = \tau_\w(n+k) + \tau_\w(n+k+1) 
\end{align*}
$\qed$
\end{proof}

The maps $\delta$, $\delta_L$, $\delta_R$, and $\delta_M$ can be, but are not necessarily, injective maps.  To see this, consider the following example.  For this example we will use the Thue-Morse word $T$, defined in Section $\ref{SecThueMorseWord}$, and subpermutations of $\pi_T$, the infinite permutation associated with $T$.  There will be 4 classes of $C_j$ words for the Thue-Morse word (namely $C_0 = 00$, $C_1 = 01$, $C_2 = 10$, and $C_3 = 11$), and any factor of length $n \geq 9$ will contain each of these 4 classes.  The following example will use subpermutations of length 9, with $n=7$ and $k=2$, to keep the example subpermutations short.  Examples like this (as in Corollary $\ref{Cor_pNEQq_SameFormAndSameRest}$) can be found for subpermutations of $\pi_T$ of length $2^r+1$ for any $r \geq 3$.

Let $p=\pi_T[0,8] = (4 \hspace{.5ex} 9 \hspace{.5ex} 7 \hspace{.5ex} 2 \hspace{.5ex} 6 \hspace{.5ex} 1 \hspace{.5ex} 3 \hspace{.5ex} 8 \hspace{.5ex} 5)$ and $q=\pi_T[12,20] = (5 \hspace{.5ex} 9 \hspace{.5ex} 7 \hspace{.5ex} 2 \hspace{.5ex} 6 \hspace{.5ex} 1 \hspace{.5ex} 3 \hspace{.5ex} 8 \hspace{.5ex} 4)$.  So $p \neq q$ and both of these subpermutations have form $T[0,7] = T[12,19] = 01101001$.  Then applying the map $\delta$ we see:
$$ p' = \delta(p) = (5 \hspace{.5ex} 8 \hspace{.5ex} 14 \hspace{.5ex} 13 \hspace{.5ex} 12 \hspace{.5ex} 10 \hspace{.5ex} 3 \hspace{.5ex} 6 \hspace{.5ex} 11 \hspace{.5ex} 9 \hspace{.5ex} 1 \hspace{.5ex} 2 \hspace{.5ex} 4 \hspace{.5ex} 7) = \delta(q) = q' $$
So $p' = q'$ which implies $\delta_L(p) = \delta_L(q)$, $\delta_R(p) = \delta_R(q)$, and $\delta_M(p) = \delta_M(q)$.  Thus these 4 maps are not injective in general and the values in Lemma $\ref{UpperBoundForTau}$ are only an upper bound.

\section{Injective Restriction Mappings}
\label{SecRestrictions}

In this section we will investigate when the restriction mappings are injective.  If $\delta$ is not injective, then $\delta_R$, $\delta_L$, and $\delta_M$ will not be injective.  But when $\delta$ is injective it implies $\delta_R$ and $\delta_L$ are injective in general, as shown by Proposition $\ref{DubRestAreBijection}$.  Unfortunately, this does not imply that the map $\delta_M$ is injective, as can be seen in Lemma $\ref{ThueMorseDubRestAreBijection}$.

%
%
%
%
%
%

\begin{lemma}
\label{DiffCjSuffixSameLk}
For the word $\w$, let $p = \pi_\w[a,a+n+k-1]$, $q = \pi_\w[b,b+n+k-1]$, $p'$, and $q'$ be as above.  Suppose $L^k(p) = L^k(q)$, but $\w[a+n-1]$ and $\w[b+n-1]$ each have a different $C_j$ class as a prefix and $\w[a+n-1] < \w[b+n-1]$.  Then there is a $j$ so that $\w[a+n-1]$ has $C_j$ as a prefix and $\w[b+n-1]$ has $C_{j+1}$ as a prefix.  Moreover, $\abs{p'_{2n-2} - q'_{2n-2}} \geq 1$ and $\abs{p'_{2n-1} - q'_{2n-1}} \geq 1$.
\end{lemma}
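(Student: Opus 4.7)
The plan is to split the proof into two stages: first I would show that the $C$-class indices are consecutive (so $j' = j+1$), and then I would derive the two difference bounds from the explicit formulas in Proposition \ref{CalcTheFwdImage}.

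Stage 1 (consecutive classes). Arguing by contradiction, suppose there exists an index $\ell$ with $j < \ell < j'$, where $j'$ is the class index of $\w[b+n-1]$. Because $L^k(p) = L^k(q)$, Lemma \ref{PermComp01} yields $\w[a, a+n-2] = \w[b, b+n-2]$, so the letters $\w_{a+h}$ and $\w_{b+h}$ agree for all $0 \leq h \leq n-2$. For $n$ in the regime where every length-$(n-1)$ factor of $\w$ contains each $C$-class as a subword (uniform recurrence), I pick $i \in [0, n-1-|C_\ell|]$ at which $\w[a+i]$ begins with $C_\ell$; the letter agreement forces $\w[b+i]$ to begin with $C_\ell$ as well. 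Then $C_\ell > C_j$ gives $\w[a+i] > \w[a+n-1]$ while $C_\ell < C_{j'}$ gives $\w[b+i] < \w[b+n-1]$, so the rank orderings between positions $i$ and $n-1$ disagree between $L^k(p)$ and $L^k(q)$, the desired contradiction.

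Stage 2 (numerical bounds). Write $m = L^k(p)_{n-1} = L^k(q)_{n-1}$. Since $\w[a+n-1] \in C_j$, the rank $m$ lies in the block $[S^p_{j-1}+1, S^p_j]$ of ranks reserved for $C_j$-shifts under $p$; similarly $m \in [S^q_j+1, S^q_{j+1}]$. Thus
\[
S^p_{j-1} < m \leq S^p_j \quad \text{and} \quad S^q_j < m \leq S^q_{j+1}.
\]
Proposition \ref{CalcTheFwdImage} expresses each of $p'_{2n-2}, p'_{2n-1}, q'_{2n-2}, q'_{2n-1}$ as $m$ plus one of the partial sums $S^p_{j-1}, S^p_j, S^q_j, S^q_{j+1}$, with the choice dictated by the first letters of $\w[a+n-1]$ and $\w[b+n-1]$. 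When these first letters disagree (forcing $j = k_0-1$), the relevant comparisons pit a quantity at most $2m-1$ against one at least $2m$ and conversely, immediately producing both bounds $|p'_{2n-2} - q'_{2n-2}| \geq 1$ and $|p'_{2n-1} - q'_{2n-1}| \geq 1$.

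When the first letters agree, the two differences reduce to $|S^p_{j-1} - S^q_j|$ and $|S^p_j - S^q_{j+1}|$, and the $m$-inequalities by themselves are not sharp enough. Here I would track the class memberships of $\w[a+i]$ and $\w[b+i]$ for $i$ in the boundary range $[n-k, n-2]$---the only positions where these can differ once $\w_{a+n-1} = \w_{b+n-1}$ is known---and then use uniform recurrence (for $n$ sufficiently large) to verify that the partial sums entering the formulas for $p'_{2n-2}, q'_{2n-2}$ and $p'_{2n-1}, q'_{2n-1}$ cannot coincide. The main obstacle is exactly this last step: once $\w_{a+n-1} = \w_{b+n-1}$, the $m$-bounds alone do not force strict distinctness, and one must carry out the delicate boundary count, invoking uniform recurrence to rule out accidental equalities between the relevant partial sums.
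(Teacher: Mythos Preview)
Your Stage 1 and the different-first-letter subcase of Stage 2 are correct and broadly match the paper's argument (your rank-inequality trick $S^p_{j-1} < m \leq S^p_j$, $S^q_j < m \leq S^q_{j+1}$ is in fact a cleaner way to get the bounds in that subcase than the paper's explicit computation). One caveat: the paper actually extracts $\abs{p'_{2n-2}-q'_{2n-2}}\geq 2$ when the first letters differ, and this sharper bound is invoked later in Claim~\ref{RestEqualThenduISdv}; your inequality argument only gives $\geq 1$, so you would have to revisit this case anyway.

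The genuine gap is in the equal-first-letter subcase, which you explicitly flag as ``the main obstacle'' but do not resolve. Saying you would ``track class memberships in the boundary range and invoke uniform recurrence'' is the right direction, but the key idea is missing. What the paper does is this: since $u=v$ ends in $\beta\alpha^m$ for some $m\geq 1$, the tail positions $n-m,\dots,n-1$ sit in classes $C_{j-m+1},\dots,C_j$ on the $a$-side but in $C_{j-m+2},\dots,C_{j+1}$ on the $b$-side, so exactly one $\abs{U_i}$ (namely $\abs{U_{j-m+1}}$) exceeds the corresponding $\abs{V_i}$ by one, and exactly $\abs{U_{j+1}}=\abs{V_{j+1}}-1$. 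The decisive step is then to observe that $\abs{V_{j-m+1}}\geq 1$ (uniform recurrence) forces $\abs{U_{j-m+1}}\geq 2$, and---because every occurrence of $C_{j-m+1}=0^{k_0-j+m-1}1$ contains an occurrence of $C_j=0^{k_0-j}1$ shifted $m-1$ letters to the right---this propagates to $\abs{U_j}\geq 2$. Plugging into Proposition~\ref{CalcTheFwdImage} then gives
\[
q'_{2n-2}-p'_{2n-2}=\abs{U_j}-1\geq 1,\qquad q'_{2n-1}-p'_{2n-1}=\abs{U_{j+1}}\geq 1.
\]
Without the nesting observation $\abs{U_j}\geq\abs{U_{j-m+1}}$ (or its analogue when $\alpha=1$), the boundary count alone does not close the argument, and your sketch does not supply this step.
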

\begin{proof}
Let $u = \w[a,a+n-1]$ and $v = \w[b,b+n-1]$.  Since $L^k(p) = L^k(q)$ we know for each $0 \leq i \leq n-2$ $L^k(p)_i < L^k(p)_{i+1}$ if and only if $L^k(q)_i < L^k(q)_{i+1}$, so $u_i = v_i$ and thus $u[0, n-2] = v[0, n-2]$.  

We will use the following notation
$$ U_j = \{ \hspace{0.5ex} 0 \leq i \leq n-1 \hspace{0.5ex} | \hspace{0.5ex} \w[a+i] \text{ has } C_j \text{ as a prefix.} \hspace{0.5ex} \} $$
$$ V_j = \{ \hspace{0.5ex} 0 \leq i \leq n-1 \hspace{0.5ex} | \hspace{0.5ex} \w[b+i] \text{ has } C_j \text{ as a prefix.} \hspace{0.5ex} \} $$
and due to the length of $u$ and $v$ we know $\abs{U_j} \geq 1$ and $\abs{V_j} \geq 1$ for each $j$.

%
First we will show there is a $j$ so that $\w[a+n-1]$ has $C_j$ as a prefix and $\w[b+n-1]$ has $C_{j+1}$ as a prefix.  Assume there is an $i > 1$ so that $\w[a+n-1]$ has $C_j$ as a prefix and $\w[b+n-1]$ has $C_{j+i}$ as a prefix.  Then there is an $l$ so that $\w[a + l]$ and $\w[b+l]$ each have $C_{j+1}$ as a prefix.  Thus we see that $L^k(p)_l > L^k(p)_{n-1}$ and $L^k(q)_l < L^k(q)_{n-1}$, which implies $L^k(p) \neq L^k(q)$.  Thus we find a contradiction to the assumption, so there is some $j$ so that $\w[a+n-1]$ has $C_j$ as a prefix and $\w[b+n-1]$ has $C_{j+1}$ as a prefix.

%
$\vspace{0.5ex}$

We now show that the difference in the values in $p'$ and $q'$ differs by at least 1.  Since $\w[a+n-1]$ has $C_j$ as a prefix for each $l \in U_{j+1}$ we know $p_{n-1} < p_l$, and so $q_{n-1} < q_l$ since $L^k(p) = L^k(q)$.  Likewise, since $\w[b+n-1]$ has $C_{j+1}$ as a prefix, for each $l \in V_{j}$ we know $q_{n-1} > q_l$ and so $p_{n-1} > p_l$.  Thus $p_{n-1}$ is the greatest of all occurrences of $C_{j}$ and $q_{n-1}$ is the least of all occurrences of $C_{j+1}$.  So $L^k(p)_{n-1} = L^k(q)_{n-1}$ and we see
\begin{align*}
L^k(p)_{n-1} &= \abs{U_0} + \cdots + \abs{U_{j}} \\
L^k(q)_{n-1} &= \abs{V_0} + \cdots + \abs{V_{j}} + 1.
\end{align*}
We will now consider the cases when $u_{n-1} \neq v_{n-1}$ and when $u_{n-1} = v_{n-1}$.

$\vspace{0.5ex}$

\noindent $\textbf{(a)}$ First, suppose $u_{n-1} \neq v_{n-1}$.  Thus we have that $u_{n-1} = 0$ and $v_{n-1} = 1$, because $\w[a+n-1] < \w[b+n-1]$ is given.  

Recall that $\abs{u}_0$ is the number of occurrences of the letter $0$ in the word $u$.  Thus $\abs{u}_0 = \abs{v}_0 + 1$, and we also have $p'_{2n-2} < p'_{2n-1}$ and $q'_{2n-2} > q'_{2n-1}$.  Thus there are exactly $\abs{v}_0$ many $j$ so that $q_{n-1} > q_j$, thus $L^k(p)_{n-1} = L^k(q)_{n-1} = \abs{v}_0 + 1 = \abs{u}_0$.  By Proposition $\ref{CalcTheFwdImage}$ we see
\begin{align*}
p'_{2n-1} &= L^k(p)_{n-1} + \abs{u}_0 = 2 \abs{u}_0 = 2 \abs{v}_0 + 2 \\
q'_{2n-1} &= L^k(q)_{n-1} + \abs{v}_0 = 2 \abs{v}_0 + 1
\end{align*}
and thus $\abs{p'_{2n-1} - q'_{2n-1}} \geq 1$.

Fix an $0 \leq i \leq n-3$ so that $u[i, i+1] = v[i, i+1] = 01$ and an $0 \leq \hat{i} \leq n-3$ so that $u[\hat{i}, \hat{i}+1] = v[\hat{i}, \hat{i}+1] = 10$.  Because $\w[a+i]$ and $\w[a+n-1]$ both have $01$ as a prefix and $p_{n-1} > p_i$, and $\w[b+\hat{i}]$ and $\w[b+n-1]$ both have $10$ as a prefix and $q_{n-1} < q_i$, we have
$$p'_{2n-2} < p'_{2i+1} < p'_{2n-1} = 2 \abs{v}_0 + 2, \hspace{3.0ex} q'_{2n-2} > q'_{2\hat{i}+1} > q'_{2n-1} = 2 \abs{v}_0 + 1 $$
and thus
$$ p'_{2n-2} < 2 \abs{v}_0 + 1 = 2 \abs{v}_0 + 2 - 1 < q'_{2n-2} - 1. $$
Therefore $\abs{p'_{2n-2} - q'_{2n-2}} \geq 2$ which satisfies the lemma.  The fact that this difference is at least 2 will be used again in Claim $\ref{RestEqualThenduISdv}$.

$\vspace{0.5ex}$

\noindent $\textbf{(b)}$ Now suppose $u = v$, so there is an $\alpha \in \{ 0, 1\}$ so that $u_{n-1} = v_{n-1} = \alpha$.

Now we investigate how the size of the $U_i$ sets are related to the size of the $V_i$ sets.  Since $u = v$, and these words have $\alpha$ as a suffix, there is some $m \geq 1$ so that $\beta \alpha^m$, where $\beta = \overline{\alpha}$, is a suffix of both $u$ and $v$.  Thus for each $0 \leq h \leq n-m-1$, there is some $i$ so $\w[a+h]$ and $\w[b+h]$ have $C_i$ as a prefix, so $h \in U_i$ and $h \in V_i$.  Moreover, this prefix is totally contained within $u$ and $v$, respectively, because $\w[a+n-m-1]$ and $\w[b+n-m-1]$ begin with the class $\beta \alpha$.  

The proofs for when $\alpha = 0$ or $\alpha = 1$ are slightly different.  We now consider the case when $\alpha = 0$, and will then give justification why the other case is also true.  Because $n-1 \in U_j$ and $n-1 \in V_{j+1}$ and $\alpha = 0$, the organization of the $U_i$ and $V_i$ sets is as follows
$$ \text{
\begin{tabular}{ l  l c l l }
 $n-m \in U_{j-m+1},$ & $n-m+1 \in U_{j-m+2},$ & $\cdots,$ & $n-2 \in U_{j-1},$ & $n-1 \in U_j$ \\
 $n-m \in V_{j-m+2},$ & $n-m+1 \in V_{j-m+3},$ & $\cdots,$ & $n-2 \in V_{j},$ & $n-1 \in V_{j+1}.$
\end{tabular}
} $$
For example, if $m = 1$ we have $n-1 \in U_j$ and $n-1 \in V_{j+1}$, so 
$$\abs{U_j} = \abs{V_j}+1 \hspace{3.0ex} \abs{U_{j+1}} = \abs{V_{j+1}}-1$$
and $\abs{U_i} = \abs{V_i}$ for all other $i$.  Since $\abs{V_j} \geq 1$ we see $\abs{U_j} \geq 2$.

If $m=2$ we have $n-2 \in U_{j-1}$, $n-2 \in V_j$, $n-1 \in U_j$, and $n-1 \in V_{j+1}$, so 
$$\abs{U_{j-1}} = \abs{V_{j-1}}+1 \hspace{3.0ex} \abs{U_j} = \abs{V_j} \hspace{3.0ex} \abs{U_{j+1}} = \abs{V_{j+1}}-1$$
and $\abs{U_i} = \abs{V_i}$ for all other $i$.  

Thus for a general $m \geq 1$,
$$\abs{U_{j-m+1}} = \abs{V_{j-m+1}}+1 \hspace{3.0ex} \abs{U_{j+1}} = \abs{V_{j+1}}-1$$
and $\abs{U_i} = \abs{V_i}$ for all other $i$.  Since $\abs{V_{j-m+1}} \geq 1$ we see $\abs{U_{j-m+1}} \geq 2$.  Each occurrence of $C_{j-m+1}$ which is contained in $u$ will have $C_j$ as a suffix, and since $n-1 \in U_j$ we have $\abs{U_j} \geq \abs{U_{j-m+1}} \geq 2$.  Thus by Proposition $\ref{CalcTheFwdImage}$ 
\begin{align*}
p'_{2n-2} &= L^k(p)_{n-1} + \abs{U_0} + \cdots + \abs{U_{j-1}} \\
q'_{2n-2} &= L^k(q)_{n-1} + \abs{V_0} + \cdots + \abs{V_{j-1}} + \abs{V_{j}} = L^k(p)_{n-1} + \abs{U_0} + \cdots + \abs{U_{j-1}} + \abs{U_{j}} - 1 \\
 &= p'_{2n-2} + \abs{U_j} - 1 \geq p'_{2n-2} + 1 \\
\\
p'_{2n-1} &= L^k(p)_{n-1} + \abs{U_0} + \cdots + \abs{U_j} \\
q'_{2n-1} &= L^k(q)_{n-1} + \abs{V_0} + \cdots + \abs{V_{j}} + \abs{V_{j+1}} = L^k(p)_{n-1} + \abs{U_0} + \cdots + \abs{U_j} + \abs{U_{j+1}} \\
 &= p'_{2n-1} + \abs{U_{j+1}} \geq p'_{2n-1} + 1
\end{align*}
and we see $\abs{p'_{2n-2} - q'_{2n-2}} \geq 1$ and $\abs{p'_{2n-1} - q'_{2n-1}} \geq 1$.

When considering the case when $\alpha = 1$ we see $u$ and $v$ will have $01^m$ as a suffix, so we find $\abs{V_{j+m}} = \abs{U_{j+m}}+1$, $\abs{V_{j}} = \abs{U_{j}}-1$, and $\abs{U_i} = \abs{V_i}$ for all other $i$.  We then find $\abs{V_j} \geq \abs{V_{j+1}} \geq \abs{V_{j+m}} \geq 2$, $q'_{2n-2} = p'_{2n-2} + \abs{V_{j+1}} - 1$, and $q'_{2n-1} = p'_{2n-1} + \abs{V_{j}}$.  Thus again, $\abs{p'_{2n-2} - q'_{2n-2}} \geq 1$ and $\abs{p'_{2n-1} - q'_{2n-1}} \geq 1$.
$\qed$
\end{proof}

The following definitions describe patterns which can occur within a set of subpermutations.

\begin{definition}
A subpermutation $p = \pi[a,a+n]$ is of $\textit{type $k$}$, for $k \geq 1$, if $p$ can be decomposed as
$$ p = ( \alpha_1 \cdots \alpha_k \lambda_1 \cdots \lambda_l \beta_1 \cdots \beta_k ) $$
where $\alpha_i = \beta_i + \varepsilon$ for each $i = 1, 2, \ldots, k$ and an $\varepsilon \in \{ -1, 1 \}$.
\end{definition}

Some examples of subpermutations of type $1$, 2, and 3 (respectively) are:
$$ (2 \hspace{.5ex} 3 \hspace{.5ex} 5 \hspace{.5ex} 4 \hspace{.5ex} 1) \hspace{4.0ex} (2 \hspace{.5ex} 5 \hspace{.5ex} 4 \hspace{.5ex} 1 \hspace{.5ex} 3 \hspace{.5ex} 6) \hspace{4.0ex} (3 \hspace{.5ex} 7 \hspace{.5ex} 5 \hspace{.5ex} 1 \hspace{.5ex} 2 \hspace{.5ex} 6 \hspace{.5ex} 4) $$

\begin{definition}
Suppose that the subpermutation $p = \pi[a,a+n]$ is of type $k$ so that for $\varepsilon \in \{-1, 1 \}$, $\alpha_i = \beta_i + \varepsilon$ for each $i = 1, 2, \ldots, k$.  If there exists a subpermutation $q = \pi[b,b+n]$ of type $k$ so that $p$ and $q$ can be decomposed as:
\begin{align*}
p &= \pi_T[a,a+n] = (\alpha_1 \cdots \alpha_k \lambda_1 \cdots \lambda_l \beta_1 \cdots \beta_k) \\ 
q &= \pi_T[b,b+n] = (\beta_1 \cdots \beta_k \lambda_1 \cdots \lambda_l \alpha_1 \cdots \alpha_k)  
\end{align*}
then $p$ and $q$ are said to be a $\textit{complementary pair of type $k$}$.  If $p$ and $q$ are a complementary pair of type $k \leq 0$ then $p = q$.
\end{definition}

The subpermutations 
$$  (2 \hspace{.5ex} 3 \hspace{.5ex} 5 \hspace{.5ex} 4 \hspace{.5ex} 1) \hspace{4.0ex} (1 \hspace{.5ex} 3 \hspace{.5ex} 5 \hspace{.5ex} 4 \hspace{.5ex} 2) $$
are a complementary pair of type 1.

\begin{lemma}
\label{NoType1PairsInDW}
For the word $\w$, let $p$, $q$, $p'$, and $q'$ be as above, then $p'$ and $q'$ are not a complementary pair of type 1.
\end{lemma}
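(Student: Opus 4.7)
The plan is to proceed by contradiction: assume $p' = \delta(p)$ and $q' = \delta(q)$ form a type 1 complementary pair, so there are adjacent integers $\alpha_1, \beta_1$ with $\alpha_1 = \beta_1 + \varepsilon$, $\varepsilon \in \{-1, +1\}$, satisfying $p'_0 = \alpha_1 = q'_{2n-1}$, $p'_{2n-1} = \beta_1 = q'_0$, and $p'_i = q'_i$ for every $1 \leq i \leq 2n-2$.

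First, I would show that $p'$ and $q'$ share the same form as length-$(2n-1)$ factors of $d(\w)$. Because $\alpha_1$ and $\beta_1$ are consecutive integers, no permutation value distinct from them can lie strictly between them; in particular $p'_1 = q'_1$ is either less than $\min(\alpha_1, \beta_1)$ or greater than $\max(\alpha_1, \beta_1)$, so the ordering of $p'_0, p'_1$ agrees with that of $q'_0, q'_1$, and the same reasoning works at positions $2n-2, 2n-1$. Combined with the equalities on the interior, this gives $d(\w)[2a, 2a+2n-2] = d(\w)[2b, 2b+2n-2]$, which by the doubled structure reduces to $\w[a, a+n-1] = \w[b, b+n-1]$; call this common length-$n$ word $u$.

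Next, let $U_j$ (resp.\ $V_j$) be the set of positions $0 \leq i \leq n-1$ for which $\w[a+i]$ (resp.\ $\w[b+i]$) has $C_j$ as prefix, exactly as in Lemma \ref{DiffCjSuffixSameLk}. Every $C_j$ has length at most $k$, so for $0 \leq i \leq n-k$ the class $j^p_i$ of $\w[a+i]$ is determined by $u[i, i+k-1]$ and equals $j^q_i$; write $j_i$ for the common value. For each $1 \leq i \leq n-k$, the pair of identities $p'_{2i} = q'_{2i}$ and $p'_{2i+1} = q'_{2i+1}$, inserted into Proposition \ref{CalcTheFwdImage}, yields two linear equations whose difference cancels the $L^k$-terms and leaves $|U_{j_i}| = |V_{j_i}|$. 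Since $\w$ is uniformly recurrent, for $n$ sufficiently large the inner window $u[1, n-k]$ already contains every length-$k$ factor of $\w$, so every class index occurs as some $j_i$ with $1 \leq i \leq n-k$; therefore $|U_j| = |V_j|$ for all $j$.

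Finally, I would apply the same cancellation at $i = 0$. The class $j_0 = j^p_0 = j^q_0$ is determined by the shared first letters of $u$, and the constraints $p'_0 - q'_0 = \varepsilon$ and $p'_1 - q'_1 = 0$ plugged into Proposition \ref{CalcTheFwdImage} collapse to $\pm(|U_{j_0}| - |V_{j_0}|) = \varepsilon \neq 0$, contradicting the previous step. The main obstacle is verifying the coverage step — making sure the restricted window $u[1, n-k]$ still witnesses every class, which forces $n$ to exceed $N_k$ by a controlled additive amount depending on $k$ — while the remaining bookkeeping is a short case check of the four sign configurations of Proposition \ref{CalcTheFwdImage}, all of which reduce to the same cancellation.
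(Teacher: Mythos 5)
Your proof is correct, and it takes a genuinely different route from the paper's. Both arguments begin the same way, reducing a putative type-1 pair to $d(u)=d(v)$ and $u=v$; but the paper then works at the \emph{right-hand} end: it uses Lemma \ref{ImageOfTheCiClasses} to show that $\w[a+n-1]$ and $\w[b+n-1]$ must begin with different classes $C_j$ while $L^k(p)=L^k(q)$, and then invokes Lemma \ref{DiffCjSuffixSameLk} to get $p'_{2n-2}\neq q'_{2n-2}$, contradicting the fact that position $2n-2$ is a shared interior entry of the pair. You instead work at the \emph{left-hand} end with a pure counting argument: differencing the two formulas of Proposition \ref{CalcTheFwdImage} at a doubled position cancels the $L^k$ term and gives $p'_{2i+1}-p'_{2i}=\pm\abs{U_{j_i}}$, so the interior equalities force $\abs{U_j}=\abs{V_j}$ for every class witnessed in the inner window, while at $i=0$ the relations $p'_1=q'_1$ and $p'_0=q'_0\pm 1$ force $\abs{U_{j_0}}\neq\abs{V_{j_0}}$. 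This bypasses Lemma \ref{DiffCjSuffixSameLk} and the $\alpha=0$ versus $\alpha=1$ case split entirely, which is a real simplification. Two small remarks: (i) you only need the class $j_0$ of $\w[a]$ to recur at some position $1\le i\le n-k$, not every class, which lightens the coverage step you flag as the main obstacle; (ii) as you note, that step requires the inner window $u[1,n-k]$, rather than $u$ itself, to witness the class, so your argument establishes the lemma only for $n$ exceeding the paper's $N_k$ by an additive constant depending on $k$ --- harmless for the asymptotic complexity computations downstream, but strictly a slightly smaller range of $n$ than the lemma as stated.
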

\begin{proof}
This proof will be done by contradiction, so we assume that $p'$ and $q'$ are a complementary pair of type 1.  Thus there is an $x$, where $1 \leq x \leq 2n-1$, so $p'$ and $q'$ can be decomposed as 
\begin{align*}
p' &= \pi_{d(w)}[2a,2a+2n - 1] = ( x \hspace{0.5ex} \lambda_1 \cdots \lambda_{2n-3} \hspace{0.5ex} (x+1) ) \\ 
q' &= \pi_{d(w)}[2b,2b+2n - 1] = ( (x+1) \hspace{0.5ex} \lambda_1 \cdots \lambda_{2n-3} \hspace{0.5ex} x )  
\end{align*}
where each $\lambda_i \in \{1, 2, \ldots, 2n\}$.  

Note that for each $i$, $\lambda_i$ is not $x$ or $x+1$.  So for each $0 \leq i,j \leq n-1$ we have $p'_{2i} < p'_{2j}$ if and only if $q'_{2i} < q'_{2j}$.  Because the letter doubling map is order preserving, we then see $L^k(p)_{i} < L^k(p)_{j}$ if and only if $L^k(q)_{i} < L^k(p)_{j}$, so $L^k(p) = L^k(q)$.

Let $u = \w[a,a+n-1]$, and $v = \w[b,b+n-1]$.  Since $p'$ and $q'$ are a complementary pair they have the same form, and since $d(u)_{2n-1} = d(u)_{2n-2} = d(v)_{2n-2} = d(v)_{2n-1}$ we find that $d(u) = d(v)$ and $u = v$.  Let $\alpha \in \{ 0, 1\}$ so that $u_0 = v_0 = \alpha$.  If $d(u)_{2n-1} = d(v)_{2n-1} \neq \alpha$ we find $p'_0 < p'_{2n-1}$ if and only if $q'_0 < q'_{2n-1}$, so $u_{n-1} = v_{n-1} = \alpha$.  

Then $\alpha$ will either be 0 or 1.  We will now consider the case when $\alpha$ is 0.

%

Suppose $\alpha = 0$, so $q'_0 < q'_1$ and $q'_{2n-2} < q'_{2n-1}$ (likewise $p'_0 < p'_1$ and $p'_{2n-2} < p'_{2n-1}$).  Thus from the decomposition of $q'$ above we see 
$$ q'_{2n-2} < q'_{2n-1} < q'_0 < q'_1 $$
and by Lemma $\ref{ImageOfTheCiClasses}$ we know there is a $i \neq j$ so that $\w[b]$ has $C_j$ as a prefix and $\w[b+n-1]$ has $C_i$ as a prefix.  Since $u = v$, we know that $\w[a]$ has $C_j$ as a prefix as well.  Then from Lemma $\ref{ImageOfTheCiClasses}$ the ordering of these same terms from $p'$ must be 
$$ p'_{2n-2} < p'_0 < p'_{2n-1} < p'_1 $$
because $\alpha = 0$ and $p'_0 < p'_{2n-1}$, so both $\w[a]$ and $\w[a+n-1]$ have $C_j$ as a prefix.  

Thus $L^k(p) = L^k(q)$ and each of $\w[a+n-1]$ and $\w[b+n-1]$ have different $C_j$ classes as a prefix.  Thus we know that $p'_{2n-2} \neq q'_{2n-2}$ from Lemma $\ref{DiffCjSuffixSameLk}$ which is a contradiction to the assumption.  A similar contradiction is found if $\alpha = 1$.  In this case we see $p'_1 < p'_0 < p'_{2n-1} < p'_{2n-2}$ and $q'_1 < q'_{2n-1} < q'_0 < q'_{2n-2}$, so $\w[a]$, $\w[b]$, and $\w[b+n-1]$ each have $C_j$ as a prefix while $\w[a+n-1]$ has $C_{j+1}$ as a prefix.
$\qed$
\end{proof}

%
\begin{claim}
\label{RestEqualThenduISdv}
Suppose $f$ is a restriction map, so either $f = R$, $f = L$, or $f = M$.  If $f(p') = f(q')$ then $d(u) = d(v)$.
\end{claim}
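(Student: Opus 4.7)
The plan is to case-split on $f \in \{L, R, M\}$ and reduce each case to showing $u = v$ (equivalently $d(u) = d(v)$ by injectivity of $d$). The common starting point is that, by Claim $\ref{PCClaim01}$ applied to $d(\w)$, the form of $p' = \pi_{d(\w)}[2a, 2a+2n-1]$ is the binary word $d(u)[0, 2n-2] = u_0 u_0 u_1 u_1 \cdots u_{n-2} u_{n-2} u_{n-1}$ of length $2n-1$, and the form of $q'$ is the analogous word for $v$. The forms of $L(p')$, $R(p')$, and $M(p')$ are obtained from this by dropping the last letter, the first letter, or both, respectively, and the hypothesis $f(p') = f(q')$ forces the corresponding forms to agree.

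For $f = R$, the form $u_0 \, u_1 u_1 \, u_2 u_2 \, \cdots \, u_{n-2} u_{n-2} \, u_{n-1}$ of $R(p')$ encodes every letter of $u$ (reading $u_0$ at position $0$, each intermediate $u_i$ from the matching pair at positions $2i-1, 2i$, and $u_{n-1}$ at position $2n-3$), so form equality alone yields $u = v$. For $f = L$ and $f = M$, only $u[0, n-2] = v[0, n-2]$ can be read from the form, and an additional argument is needed to pin down the equality $u_{n-1} = v_{n-1}$.

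For $f = L$, I would first upgrade $L(p') = L(q')$ to the permutation equality $L^k(p) = L^k(q)$: for any $0 \leq i, j \leq n-1$, the comparison $L(p')_{2i} < L(p')_{2j}$ equals $p'_{2i} < p'_{2j}$, which by Lemma $\ref{ImageOfTheCiClasses}$ and the order-preservation of $d$ is equivalent to $L^k(p)_i < L^k(p)_j$. Assuming toward contradiction that $u_{n-1} = 0$ and $v_{n-1} = 1$ (the reverse being symmetric), the shifts $\w[a+n-1]$ and $\w[b+n-1]$ then lie in distinct $C_j$-classes with $\w[a+n-1] < \w[b+n-1]$, so Lemma $\ref{DiffCjSuffixSameLk}$(a) applies and yields $\abs{p'_{2n-2} - q'_{2n-2}} \geq 2$. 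However, $u_{n-1} = 0$ forces $p'_{2n-1} > p'_{2n-2}$ by Claim $\ref{PCClaim01}$, giving $L(p')_{2n-2} = p'_{2n-2}$, while similarly $L(q')_{2n-2} = q'_{2n-2} - 1$; the equality $L(p')_{2n-2} = L(q')_{2n-2}$ thus forces $\abs{p'_{2n-2} - q'_{2n-2}} = 1$, contradicting the lower bound.

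The case $f = M$ is the main obstacle, because $M(p') = M(q')$ gives only $R(L^k(p)) = R(L^k(q))$ on positions $1, \ldots, n-1$ rather than the full $L^k(p) = L^k(q)$. My plan is to run the contradiction one position earlier: under the assumption $u_{n-1} \neq v_{n-1}$, a short case check using that at least one of $k_0, k_1$ is at least $2$ (since $\w$ is aperiodic binary) shows that the shifts $\w[a+n-2]$ and $\w[b+n-2]$ already lie in distinct $C_j$-classes, because the length-two suffixes $u_{n-2} u_{n-1}$ and $v_{n-2} v_{n-1}$ place them in different classes. Both positions $2n-3$ and $2n-4$ of $p'$ and $q'$ are retained by $M$, so I would establish a variant of Lemma $\ref{DiffCjSuffixSameLk}$ using only $R(L^k(p)) = R(L^k(q))$ to obtain a lower bound of the form $\abs{p'_{2n-3} - q'_{2n-3}} \geq 2$, then contradict it by direct computation from $M(p') = M(q')$. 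The delicate step will be adapting the lemma to this weaker hypothesis and accounting carefully for how $p'_0$ and $p'_{2n-1}$ (both dropped by $M$) affect the resulting rank differences.
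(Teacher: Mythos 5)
Your cases $f=R$ and $f=L$ are correct and essentially the paper's own argument: for $R$ the form of $R(p')$ already encodes every letter of $u$, and for $L$ you recover $L^k(p)=L^k(q)$ from the even positions, reduce to $u_{n-1}\neq v_{n-1}$, and contradict $L(p')_{2n-2}=L(q')_{2n-2}$ using the bound $\abs{p'_{2n-2}-q'_{2n-2}}\geq 2$ from part (a) of the proof of Lemma \ref{DiffCjSuffixSameLk} (the paper explicitly records that stronger bound for exactly this purpose).

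The case $f=M$ is where your proposal has a genuine gap, and the detour that creates it is unnecessary. The middle restriction deletes only positions $0$ and $2n-1$ of $p'$; in particular position $2n-2$ --- the first letter of the doubled block $d(u)_{2n-2}d(u)_{2n-1}=u_{n-1}u_{n-1}$ --- survives as $M(p')_{2n-3}$. So after reading $u[0,n-2]=v[0,n-2]$ off the form of $M(p')$, the paper runs the same contradiction as in the $L$ case at that surviving position: if $u_{n-1}\neq v_{n-1}$, the computation behind Lemma \ref{DiffCjSuffixSameLk} gives a gap between $p'_{2n-2}$ and $q'_{2n-2}$ large enough to survive the two unit rank adjustments caused by deleting $p'_0$ and $p'_{2n-1}$, forcing $M(p')_{2n-3}\neq M(q')_{2n-3}$. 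Your plan instead retreats to position $n-2$ of $u$ and calls for ``a variant of Lemma \ref{DiffCjSuffixSameLk} using only $R(L^k(p))=R(L^k(q))$,'' which you leave unproven; and that variant is not a routine adaptation, because the lemma's quantitative estimate rests on the two distinguished shifts lying in \emph{consecutive} classes $C_j$ and $C_{j+1}$, which fails at position $n-2$ (e.g.\ $\w[a+n-2]$ may begin $0^m1$ for any $2\leq m\leq k_0$ while $\w[b+n-2]$ begins $01$, so the classes can be far apart and the whole bookkeeping of the $U_i,V_i$ sets must be redone). Your worry that the full hypothesis $L^k(p)=L^k(q)$ is unavailable under $M$ is a fair reading of the paper's terse ``as in part (b),'' but the resolution is that separating $p'_{2n-2}$ from $q'_{2n-2}$ only requires comparisons among the retained positions $1,\ldots,2n-2$ (counting how many retained entries lie below each already does it, using that $d(\w)[2a+2n-2]$ begins $00$ while $d(\w)[2b+2n-2]$ begins $11$), not a new lemma at position $n-2$.
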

\begin{proof}
Again we have $p = \pi_\w[a,a+n+k-1]$, $q = \pi_\w[b,b+n+k-1]$, $u = \w[a,a+n-1]$, and $v = \w[b,b+n-1]$.

%
%
$\vspace{0.5ex}$

$\bf{(a)}$ Suppose $f = R$, so $R(p') = R(q')$.  For each $0 \leq i \leq 2n-3$, $R(p')_i < R(p')_{i+1}$ is and only if $R(q')_i < R(q')_{i+1}$ and thus $d(u)_{i+1} = d(v)_{i+1}$.  Since $d(u)_0 = d(u)_1$ and $d(v)_0 = d(v)_1$, we see $d(u)_0 = d(u)_1 = d(v)_1 = d(v)_0$.  In a similar fashion we see $d(u)_{2n-1} = d(u)_{2n-2} = d(v)_{2n-2} = d(v)_{2n-1}$.  Thus $d(u) = d(v)$ and $u=v$.

%
%
$\vspace{0.5ex}$

$\bf{(b)}$ Suppose $f = L$, so $L(p') = L(q')$ and assume $d(u) \neq d(v)$.  Thus for each $0 \leq i, j \leq n-1$, $L(p')_{2i} < L(p')_{2j}$ if and only if $L(q')_{2i} < L(q')_{2j}$, so $L^k(p)_{i} < L^k(p)_{j}$ if and only if $L^k(q)_{i} < L^k(q)_{j}$ so $L^k(p) = L^k(q)$.  Thus $\w[a,a+n-2] = \w[b,b+n-2]$ and so $d(\w)[2a,2a+2n-3] = d(\w)[2b,2b+2n-3]$.  Thus $d(\w)[2a+2n-2,2a+2n-1] \neq d(\w)[2b+2n-2,2b+2n-1]$, and so $u_{n-1} \neq v_{n-1}$.  Thus $\w[a+n-1]$ and $\w[b+n-1]$ not only have different $C_j$ classes as a prefix, but they begin with different letters.  So as seen in Lemma $\ref{DiffCjSuffixSameLk}$, $\abs{p'_{2n-2} - q'_{2n-2}} \geq 2$ and thus $L(p')_{2n-2}$ and $L(p')_{2n-2}$ can not be equal which is a contradiction to the assumption.  Therefore $d(u) = d(v)$ and $u = v$.

%
%
$\vspace{0.5ex}$

$\bf{(c)}$ Suppose $f = M$, so $M(p') = M(q')$.  For $0 \leq i \leq 2n-3$, $d(u)_i = d(v)_i$ as in part $\bf{(a)}$.  Then assuming $d(u) \neq d(v)$ we find a contradiction as in part $\bf{(b)}$, so $d(u) = d(v)$.  Therefore if $M(p') = M(q')$ then $d(u) = d(v)$, and $u = v$.
$\qed$
\end{proof}

%
We are now to the main result of this section.  We show that when $\delta$ is injective we find that both of $\delta_L$ and $\delta_R$ are injective.

\begin{proposition}
\label{DubRestAreBijection}
For the word $\w$, let $p$, $q$, $p'$, and $q'$ be as above.  Then
\begin{itemize}
\item[(a)] $p' = q'$ if and only if $R(p') = R(q')$.
\item[(b)] $p' = q'$ if and only if $L(p') = L(q')$.
\end{itemize}
\end{proposition}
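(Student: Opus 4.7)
The forward direction in both (a) and (b) is immediate: equal subpermutations automatically have equal restrictions. For the converse of (a), I plan to argue by contradiction: assume $R(p') = R(q')$ yet $p' \neq q'$. Claim~\ref{RestEqualThenduISdv} immediately gives $u = v$. Since $R(p') = R(q')$ pins the relative ranks of positions $1, \ldots, 2n-1$, the remaining discrepancy must sit at position $0$: set $r_p := p'_0$ and $r_q := q'_0$ and, without loss of generality, take $r_p < r_q$. A direct reconstruction then shows that the full difference between $p'$ and $q'$ is a cyclic shift of the values $\{r_p, r_p+1, \ldots, r_q\}$ along positions $\{0, i_1, \ldots, i_{r_q - r_p}\}$, where $i_1 < i_2 < \cdots$ are the positions $i \geq 1$ with $p'_i \in \{r_p+1, \ldots, r_q\}$; in particular $|p'_i - q'_i| \leq 1$ for every $i \geq 1$.

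Next I would extract the equality $L^k(p) = L^k(q)$. The indices $0, 2, 4, \ldots, 2n-2$ of $R(p')$ correspond to the odd-index shifts $d(\w)[2a+2s+1]$ for $s = 0, \ldots, n-1$, and by Lemma~\ref{ImageOfTheCiClasses} the lexicographic order of these odd-index shifts mirrors that of the underlying shifts $\w[a+s]$; thus the restriction of $R(p') = R(q')$ to these indices is precisely $L^k(p) = L^k(q)$. Combining $u = v$ and $L^k(p) = L^k(q)$ with Proposition~\ref{CalcTheFwdImage}, the classes $j(p, i)$ and $j(q, i)$ must agree whenever the class-defining window $\w[a+i, a+i+k-1]$ lies inside $u$, i.e., for all $i \leq n-k$. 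Any remaining discrepancy between $p'$ and $q'$ must therefore be driven by classes differing at positions in the trailing constant run of $u$. A short run-length analysis of how the continuation past $u$ shifts the class indices uniformly across that trailing run shows that the discrepancy necessarily reaches $i = n-1$, placing us in the hypothesis of Lemma~\ref{DiffCjSuffixSameLk}; that lemma then supplies $|p'_{2n-2} - q'_{2n-2}| \geq 1$ and $|p'_{2n-1} - q'_{2n-1}| \geq 1$.

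Because the cyclic-shift structure already enforces $|p'_i - q'_i| \leq 1$ for $i \geq 1$, both inequalities above are tight, so $2n-2$ and $2n-1$ both lie in $\{i_1, \ldots, i_{r_q - r_p}\}$. A case check using the Proposition~\ref{CalcTheFwdImage} formula $p'_{2n-1} - p'_{2n-2} = \pm\,|\gamma^p_{j(p, n-1)}|$ (and its analogue for $q'$) then collapses the remaining configurations to $r_q - r_p = 1$ with the unique differing intermediate position at $2n-1$: exactly the format of a complementary pair of type~$1$. Lemma~\ref{NoType1PairsInDW} explicitly forbids this, producing the desired contradiction. Part~(b) is handled symmetrically: Claim~\ref{RestEqualThenduISdv}(b) again gives $u = v$, the analogous extraction from the even-index positions of $L(p') = L(q')$ yields $L^k(p) = L^k(q)$, and the same class-discrepancy analysis---now with position $2n-1$ playing the role of position $0$---once more reduces to a type~$1$ complementary pair forbidden by Lemma~\ref{NoType1PairsInDW}.

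The main obstacle is the final collapse step: reconciling the absolute-difference lower bounds from Lemma~\ref{DiffCjSuffixSameLk} with the unit-bound cyclic-shift structure and the explicit $|\gamma^p_j|$, $|\gamma^q_j|$ formulas so as to rule out $r_q - r_p \geq 2$. For longer trailing runs of $u$, where the class shift moves several positions uniformly and the class counts $|\gamma^p_j|$ and $|\gamma^q_j|$ can both differ and still produce a valid $L^k(p) = L^k(q)$, the bookkeeping becomes delicate and will require careful cases on the trailing-run length $m$ and on whether $u_{n-1}$ equals $0$ or $1$.
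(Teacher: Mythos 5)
Your setup is sound and partly mirrors the paper: the forward direction is trivial, Claim~\ref{RestEqualThenduISdv} gives $u=v$, and your observation that $R(p')=R(q')$ determines everything except the rank of position $0$ (so that $q'_i=p'_i-1$ exactly at the $r_q-r_p$ positions $i\geq 1$ with $p'_i\in\{r_p+1,\dots,r_q\}$, and $q'_i=p'_i$ elsewhere) is correct and is essentially the paper's dichotomy in disguise. The gap is in the final step. Once you have forced both $2n-2$ and $2n-1$ into the differing set via Lemma~\ref{DiffCjSuffixSameLk}, you have $r_q-r_p\geq 2$ and $p'_{2n-2}\neq q'_{2n-2}$; but a complementary pair of type $1$ by definition satisfies $p'_i=q'_i$ for all $1\leq i\leq 2n-2$. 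So the configuration you have derived is \emph{not} a type-$1$ pair, Lemma~\ref{NoType1PairsInDW} does not apply to it, and your claimed ``collapse to $r_q-r_p=1$ with the unique differing position at $2n-1$'' contradicts what you just established. The proposed rescue via the identity $p'_{2n-1}-p'_{2n-2}=\pm\abs{\gamma_{j}}$ only yields $\abs{U_{j_p}}=\abs{V_{j_q}}$, which is not obviously impossible; you flag this bookkeeping as unresolved, and it is exactly where the proof is missing.

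The missing idea is the paper's Case~a.1, which kills your troublesome configuration in two lines and makes the $\gamma$-arithmetic unnecessary. If some position $1\leq i\leq 2n-2$ satisfies $p'_0<p'_i$ and $q'_0>q'_i$ (equivalently, $i$ is a differing position other than $2n-1$ --- and you have produced such an $i$, namely $i=2n-2$), then one first checks $d(u)_i=d(u)_{i+1}=d(u)_0=d(u)_1=\alpha$ (otherwise Lemma~\ref{ImageOfTheCiClasses} would force the two inequalities to agree), and then Claim~\ref{PCClaim01}(3) applied to the doubled word propagates the inversion one step to the right: $p'_1<p'_{i+1}$ while $q'_1>q'_{i+1}$, i.e.\ $R(p')_0<R(p')_i$ while $R(q')_0>R(q')_i$, contradicting $R(p')=R(q')$ directly. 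The type-$1$-pair lemma is needed only in the residual case where $2n-1$ is the \emph{unique} differing position among $1,\dots,2n-1$, which is precisely when the pair genuinely has type-$1$ form. Without this propagation argument (or a completed version of your $\abs{\gamma_j}$ case analysis), the converse direction is not proved; part~(b) has the same hole.
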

\begin{proof}
Again we have $p = \pi_\w[a,a+n+k-1]$, $q = \pi_\w[b,b+n+k-1]$, $u = \w[a,a+n-1]$, and $v = \w[b,b+n-1]$.  For both of these cases it should be clear that if $p' = q'$ then each of $R(p') = R(q')$ and $L(p') = L(q')$.  
%

We will again use the notation
$$ U_j = \{ \hspace{0.5ex} 0 \leq i \leq n-1 \hspace{0.5ex} | \hspace{0.5ex} \w[a+i] \text{ has } C_j \text{ as a prefix.} \hspace{0.5ex} \} $$
$$ V_j = \{ \hspace{0.5ex} 0 \leq i \leq n-1 \hspace{0.5ex} | \hspace{0.5ex} \w[b+i] \text{ has } C_j \text{ as a prefix.} \hspace{0.5ex} \} $$
and due to the length of $u$ and $v$ we know $\abs{U_j} \geq 1$ and $\abs{V_j} \geq 1$ for each $j$.

$\vspace{1.0ex}$

$\textbf{(a)}$  Suppose $p' \neq q'$, and assume $R(p') = R(q')$.  So by Claim $\ref{RestEqualThenduISdv}$ we know $d(u) = d(v)$ and $u = v$.

For each pair of real numbers $i \neq j$ where $0 \leq i,j \leq 2n-2$, 
$$R(p')_i < R(p')_j \iff R(q')_i < R(q')_j$$
and thus
$$p'_{i+1} < p'_{j+1} \iff q'_{i+1} < q'_{j+1}.$$
Since $p' \neq q'$ there must be some $1 \leq i \leq 2n-1$ so, without loss of generality, 
$$ p'_0 < p'_i \text{ and } q'_0 > q'_i. $$
There is an $\alpha \in \{ 0,1 \}$ so $d(u)_1 = d(v)_1 = \alpha$, and so $d(u)_0 = d(v)_0 = \alpha$.  If $d(u)_i = d(v)_i \neq \alpha$ we have $p'_0 < p'_i$ if and only if $q'_0 < q'_i$, which would be a contradiction.  So $d(u)_i = d(v)_i = \alpha$.

$\vspace{0.5ex}$

$\textbf{Case a.1:}$  Suppose for $1 \leq i \leq 2n-2$ we have $ p'_0 < p'_i$ and $q'_0 > q'_i$.  If $d(u)_{i+1} = d(v)_{i+1} \neq \alpha$ we have $d(u)[0,1] = \alpha \alpha$ and $d(u)[i,i+1] = \alpha \beta$, so $p'_0 < p'_i$ if and only if  $q'_0 < q'_i$, which is a contradiction, so $d(u)_{i+1} = d(v)_{i+1} = \alpha$.  Thus $d(u)[i,i+1] = d(v)[i,i+1] = \alpha \alpha$ and 
$$ p'_0 < p'_i \implies p'_1 < p'_{i+1} \implies R(p')_0 < R(p')_i $$
$$ q'_0 > q'_i \implies q'_1 > q'_{i+1} \implies R(q')_0 > R(q')_i $$
by Claim $\ref{PCClaim01}$ which contradicts the assumption.  Therefore $R(p') \neq R(q')$.


$\vspace{0.5ex}$

$\textbf{Case a.2:}$  Suppose $p'_0 < p'_{2n-1}$ and $q'_0 > q'_{2n-1}$, and for each other $i$ we have $p'_0 < p'_i$ if and only if $q'_0 < q'_i$.  So as above, we have $d(u)[0,1] = d(v)[0,1] = \alpha \alpha$ and $d(u)[2n-2,2n-1] = d(v)[2n-2,2n-1] = \alpha \alpha$.  Thus $p'_0 < p'_{2n-1}$ and $q'_0 > q'_{2n-1}$ imply
$$p'_{2n-1}-1 = R(p')_{2n-2} = R(q')_{2n-2} = q'_{2n-1}.$$

For each $1 \leq j \leq 2n-2$ we know 
$$p'_0 < p'_j \iff q'_0 < q'_j \hspace{2.0ex} \implies \hspace{2.0ex} R(p')_{j-1} = p'_j \iff R(q')_{j-1} = q'_j$$
and so $p'_j = q'_j$ for each $1 \leq j \leq 2n-2$.  So only $p'_0 \neq q'_0$ and $p'_{2n-1} \neq q'_{2n-1}$.  Since $p'_{2n-1} = q'_{2n-1} +1$, it must be 
$$p'_0  = p'_{2n-1} - 1 \text{ and } q'_0  = q'_{2n-1} + 1 .$$
Let $1 \leq x \leq 2n$ so that $p'_0 = q'_{2n-1} = x$ and $q'_0 = p'_{2n-1} = x+1$.  Thus $p'$ and $q'$ can be decomposed as $p' = ( x \hspace{0.5ex} \lambda_1 \cdots \lambda_{2n-3} \hspace{0.5ex} (x+1) )$ and $q' = ( (x+1) \hspace{0.5ex} \lambda_1 \cdots \lambda_{2n-3} \hspace{0.5ex} x )$.  So we have that $p'$ and $q'$ are a complementary pair of type 1, which is a contradiction by Lemma $\ref{NoType1PairsInDW}$.  Thus $R(p') \neq R(q')$.

Therefore $p' = q'$ if and only if $R(p') = R(q')$.

$\vspace{1.0ex}$

$\textbf{(b)}$  Suppose $p' \neq q'$, and assume $L(p') = L(q')$.  So by Claim $\ref{RestEqualThenduISdv}$ we know $d(u) = d(v)$ and $u = v$.

For each pair of real numbers $i \neq j$ where $0 \leq i,j \leq 2n-2$, 
$$L(p')_i < L(p')_j \iff L(q')_i < L(q')_j$$
and thus
$$p'_{i} < p'_{j} \iff q'_{i} < q'_{j}.$$
As in Claim $\ref{RestEqualThenduISdv}$ we can also see that $L^k(p) = L^k(q)$.

Since $p' \neq q'$ there must be some $0 \leq i \leq 2n-2$ so, without loss of generality, 
$$ p'_{2n-1} < p'_i \text{ and } q'_{2n-1} > q'_i. $$
There is an $\alpha \in \{ 0,1 \}$ so $d(u)_{2n-2} = d(v)_{2n-2} = \alpha$, so $d(u)[2n-2, 2n-1] = d(v)_[2n-2, 2n-1] = \alpha \alpha$.  If $d(u)_i = d(v)_i \neq \alpha$ we have $p'_{2n-2} < p'_i$ if and only if $q'_{2n-2} < q'_i$, which would be a contradiction, so $d(u)_i = d(v)_i = \alpha$.  It should be noted that $i \neq 2n-2$, because $d(u)_{2n-2} = d(v)_{2n-2} = \alpha$ so $p'_{2n-2} < p'_{2n-1}$ if and only if $q'_{2n-2} < q'_{2n-1}$.  

$\vspace{0.5ex}$

$\textbf{Case b.1:}$  Suppose for $1 \leq i \leq 2n-2$ we have $ p'_{2n-1} < p'_i$ and $q'_{2n-1} > q'_i$.  If $d(u)_{i-1} = d(v)_{i-1} = \alpha$ we have $d(u)[2n-2,2n-1] = \alpha \alpha$ and $d(u)[i-1,i] = \alpha \alpha$, so 
$$ p'_{2n-1} < p'_i \implies p'_{2n-2} < p'_{i-1} $$
$$ q'_{2n-1} > q'_i \implies q'_{2n-1} > q'_{i-1} $$
which contradicts the assumption.  So $d(u)_{i-1} = d(v)_{i-1} \neq \alpha$, say $d(u)_{i-1} = d(v)_{i-1}  = \beta$.  Thus $d(u)[i-1,i+1] = \beta \alpha \alpha$ and $i$ is an even number, so rather than using $i$ we will use $2c$.  

Because $d(u)[2n-2,2n-1] = d(v)[2n-2,2n-1] = \alpha \alpha$ we know $p'_{2n-2} < p'_{2n-1}$ if and only if $q'_{2n-2} < q'_{2n-1}$, and thus $p'_{2n-2} = L(p')_{2n-2}$ if and only if $L(q')_{2n-2} = q'_{2n-2}$, so $p'_{2n-2} = q'_{2n-2}$ because $L(p') = L(q')$.

If $\alpha = 0$ we find 
$$p'_{2n-2} < p'_{2n-1} < p'_{2c} < p'_{2c+1} \hspace{2.0ex} \text{ and } \hspace{2.0ex} q'_{2n-2} < q'_{2c} < q'_{2n-1} < q'_{2c+1}, $$
and if $\alpha = 1$ we find 
$$q'_{2c+1} < q'_{2c} < q'_{2n-1} < q'_{2n-2} \hspace{2.0ex} \text{ and } \hspace{2.0ex} p'_{2c+1} < p'_{2n-1} < p'_{2c} < p'_{2n-2}.$$
In either case, $\w[a+n-1]$ and $\w[b+n-1]$ each have a different $C_j$ class as a prefix.  So by Lemma $\ref{DiffCjSuffixSameLk}$ we have $\abs{p'_{2n-2} - q'_{2n-2}} \geq 1$, which is a contradiction to the assumption, and $L(p') \neq L(q')$.

$\vspace{0.5ex}$

$\textbf{Case b.2:}$  Suppose $p'_{2n-1} < p'_0$ and $q'_{2n-1} > q'_0$, and for each other $i$ we have $p'_0 < p'_i$ if and only if $q'_0 < q'_i$.  So as above we have $d(u)[0,1] = d(v)[0,1] = \alpha \alpha$ and $d(u)[2n-2,2n-1] = d(v)[2n-2,2n-1] = \alpha \alpha$.  Thus $p'_{2n-1} < p'_0$ and $q'_{2n-1} > q'_0$ imply $p'_0 = L(p')_0 = L(q')_0 = q'_0-1$.  For each $1 \leq j \leq 2n-2$ we know the following
$$p'_{2n-1} < p'_j \iff q'_{2n-1} < q'_j, \hspace{4.0ex} L(p')_{j} = p'_j \iff L(q')_{j} = q'_j,$$
and thus $p'_j = q'_j$ for each $1 \leq j \leq 2n-2$, because $L(p')_{j} = L(q')_{j}$.  

So only $p'_0 \neq q'_0$ and $p'_{2n-1} \neq q'_{2n-1}$.  Since $q'_0 = p'_0 +1$, it must be $p'_{2n-1}  = p'_0 - 1 \text{ and } q'_{2n-1}  = q'_0 + 1$.  Let $1 \leq x \leq 2n$ so that $p'_{2n-1} = q'_0 = x$ and $q'_{2n-1} = p'_0 = x+1$.  So once again we have that $p'$ and $q'$ are a complementary pair of type 1, which is a contradiction by Lemma $\ref{NoType1PairsInDW}$.  Thus $L(p') \neq L(q')$.

Therefore $p' = q'$ if and only if $L(p') = L(q')$.
$\qed$
\end{proof}

Therefore when $\delta$ is injective, $\delta_R$ and $\delta_L$ are both injective as well.  A troubling fact is the map $\delta$ being injective does not imply $\delta_M$ is injective.  As will be shown for the Thue-Morse word $T$, there are cases of distinct subpermutations $p$ and $q$ where $\delta(p) \neq \delta(q)$ but $\delta(p)_M = \delta_M(q)$.  The following sections deal with some different words and we will show when $\delta$ and $\delta_M$ are injective, but these proofs will use special properties of the words considered.

\section{Sturmian Words}
\label{SecSturmianWords}

In this section we will investigate the permutation complexity of Sturmian words under the doubling map.  An infinite word $s$ is a $\textit{Sturmian word}$ if for each $n \geq 0$, $s$ has exactly $n+1$ distinct factors of length $n$, or $\rho_s(n) = n+1$ (the only factor of length $n=0$ being the empty-word).  Thus since $\rho_s(1) = 2$, it should be clear that Sturmian words are binary words.  The class of Sturmian words have been a topic of much study (see $\cite{Berstel96, CovHed73, AlgCombOnWords}$).  An equivalent definition for Sturmian words is that they are the class of aperiodic balanced binary words.  A word is $\textit{balanced}$ if for all factors $u$ and $v$ with $\abs{u} = \abs{v}$, $\abs{ \abs{u}_a - \abs{v}_a } \leq 1$ for each $a$ in the alphabet.

First we will show when the map $\delta$ is applied to permutations from a Sturmian word, $\delta$ is injective and thus a bijection.  Then we show the maps $\delta_R$, $\delta_L$, and $\delta_M$ are injective as well and thus also bijections.  First we look at the permutation complexity of the Sturmian words which has been calculated.
\begin{lemma}
\emph{($\cite{Makar09}$)}
\label{PermComp02}
Let $s$ be a Sturmian word.  For natural numbers $a_1$ and $a_2$ we have $\pi_{s}[a_1,a_1 + n + 1] = \pi_{s}[a_2,a_2 + n+1]$ if and only if $s[a_1, a_1+n] = s[a_2, a_2+n]$.
\end{lemma}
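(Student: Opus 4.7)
My plan is to handle the two directions of the biconditional separately. The forward direction ($\Longrightarrow$) is immediate: rewrite the hypothesis $\pi_s[a_1, a_1+n+1] = \pi_s[a_2, a_2+n+1]$ as $\pi_{s[a_1]}[0, n+1] = \pi_{s[a_2]}[0, n+1]$ applied to the two infinite binary shifts $s[a_1]$ and $s[a_2]$, then invoke Lemma~\ref{PermComp01} to obtain $s[a_1, a_1+n] = s[a_2, a_2+n]$. This direction holds for any aperiodic binary word and uses nothing about the Sturmian property.

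For the converse ($\Longleftarrow$), assume $s[a_1, a_1+n] = s[a_2, a_2+n] = u$ and set $p = \pi_s[a_1, a_1+n+1]$, $q = \pi_s[a_2, a_2+n+1]$. Parts (1)--(2) of Claim~\ref{PCClaim01} already force $p_k < p_{k+1}$ iff $q_k < q_{k+1}$ (both dictated by the common form $u$), so it remains to verify $p_i < p_j \iff q_i < q_j$ for every pair $0 \leq i < j \leq n+1$. To this end I iterate Claim~\ref{PCClaim01}(3): as long as $u_{i+k} = u_{j+k}$, the relative order of $\pi_s(a_1+i)$ and $\pi_s(a_1+j)$ equals the order of $\pi_s(a_1+i+1)$ and $\pi_s(a_1+j+1)$, and analogously at $a_2$. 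If the first $k$ with $u_{i+k} \neq u_{j+k}$ satisfies $j+k \leq n$, the comparison is decided by two letters of the common factor $u$ and therefore agrees at $a_1$ and $a_2$.

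The main obstacle is the remaining case, in which $u_{i+k} = u_{j+k}$ holds for all $0 \leq k \leq n-j$, so that $u$ carries $r := j-i$ as a local period on its suffix $[i, n]$. After iterating off the right end of $u$ the comparison walks to positions $a+n+1-r$ and $a+n+1$, where $a+n+1$ lies just beyond the known factor, and a priori the continuation of $s$ to the right of $u$ could differ between $a_1$ and $a_2$. To close this case I plan to invoke the balance property of Sturmian words, together with the fact that a Sturmian word has exactly one right-special factor of each length, so that all but one factor $u$ of length $n+1$ admits a unique right extension. When $u$ is not right-special, $s_{a+n+1}$ is uniquely determined by $u$ and a further application of Claim~\ref{PCClaim01}(3) closes the comparison. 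When $u$ is right-special, the assumed local period $r$ on the suffix $[i, n]$ together with the balance property constrains the two possible extensions to produce the same ordering on the pair $(i,j)$, which is the delicate check.

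As a conceptual sanity check I would keep in mind the rotation representation of Sturmian words: for slope $\alpha$, a Sturmian word arises from the orbit of a point on the circle under rotation by $\alpha$, and both $s[a, a+n]$ and $\pi_s[a, a+n+1]$ are functions of which arc, in a natural partition of the circle into $n+2$ arcs, contains the orbit point associated to position $a$. The two partitions coincide for Sturmian words, so equal factors at $a_1$ and $a_2$ force equal subpermutations. I expect the combinatorial argument sketched above to be the primary presentation, with the rotation picture as a fallback if the ``beyond $u$'' case proves recalcitrant.
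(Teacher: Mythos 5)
This lemma is not proved in the paper at all --- it is quoted from \cite{Makar09} --- so there is no in-paper argument to compare yours against; I can only assess your proposal on its own terms. Your forward direction is fine (it is exactly Lemma~\ref{PermComp01} applied to the shifts $s[a_1]$ and $s[a_2]$, and needs nothing Sturmian). The converse, however, has a genuine gap: the case you label ``delicate'' is precisely where the entire content of the lemma lives, and you do not actually carry it out. Note that this case is not a rare boundary situation. For every pair $(i,\,n+1)$ the condition ``$u_{i+k}=u_{j+k}$ for all $k\le n-j$'' is vacuous, so all $n+1$ pairs involving the last position of the subpermutation escape the window immediately (the letter $s_{a+n+1}$ is not part of the common factor $u$ at all), and more pairs escape whenever $u$ has a long periodic suffix. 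For all of these pairs the comparison of $\pi_s(a+i)$ and $\pi_s(a+j)$ depends on letters of $s$ beyond position $a+n$, which a priori differ between the two occurrences. Saying that balance plus the uniqueness of the right-special factor ``constrains the two possible extensions to produce the same ordering'' names the right ingredients but supplies no argument; this is exactly the step where the Sturmian hypothesis must be used in an essential way, since the statement is false for general aperiodic binary words (the Thue--Morse analysis later in this paper exhibits distinct subpermutations with the same form, i.e.\ equal factors with unequal permutations).

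The rotation-representation ``fallback'' you mention is indeed the standard and cleanest route to this lemma, but as written it is also only an assertion: you would need to justify (i) that the lexicographic order of the shifts $s[m]$ agrees with the circular order of the orbit points $\{m\alpha+\rho\}$ relative to a fixed cut, and (ii) that the partition of the circle into $n+2$ arcs indexing factors of length $n+1$ coincides with the partition determined by the points $\{-i\alpha\}$, $0\le i\le n+1$, that governs the relative order of the $n+2$ orbit points --- whence factor determines arc determines subpermutation. Alternatively, a short counting argument is available given the other quoted facts: Lemma~\ref{PermComp01} shows the map $\mathrm{Perm}^s(n+2)\to\scr{F}_s(n+1)$ sending a subpermutation to its form is well defined and injective, and since $\tau_s(n+2)=n+2=\rho_s(n+1)$ by Theorem~\ref{PermCompOfSturmian} it is a bijection, which is exactly the lemma --- though this is only legitimate if one has an independent proof that $\tau_s(n)=n$. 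As it stands, your write-up identifies the crux correctly but does not close it.
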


\begin{theorem}
\emph{($\cite{Makar09}$)}
\label{PermCompOfSturmian}
Let $s$ be a Sturmian word.  For each $n \geq 2$, $\tau_s(n) = n$ .
\end{theorem}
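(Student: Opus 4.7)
The plan is to reduce the theorem immediately to Lemma \ref{PermComp02} together with the defining factor-complexity formula $\rho_s(n) = n+1$ for Sturmian words. Lemma \ref{PermComp02} asserts, for every $n \geq 0$ and all $a_1, a_2 \geq 0$,
$$\pi_s[a_1, a_1+n+1] = \pi_s[a_2, a_2+n+1] \iff s[a_1, a_1+n] = s[a_2, a_2+n].$$
The subpermutation on the left-hand side has length $n+2$ while the factor on the right-hand side has length $n+1$. Thus the assignment $\pi_s[a, a+n+1] \mapsto s[a,a+n]$ descends to a well-defined bijection $\mathrm{Perm}^s(n+2) \longleftrightarrow \scr{F}_s(n+1)$.

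Given this bijection, I would compute
$$\tau_s(n+2) = \abs{\mathrm{Perm}^s(n+2)} = \abs{\scr{F}_s(n+1)} = \rho_s(n+1) = n+2,$$
and then re-index by setting $m = n+2$ to obtain $\tau_s(m) = m$ for every $m \geq 2$, which is the desired conclusion. The case $m = 2$ is covered because it corresponds to $n = 0$, where Lemma \ref{PermComp02} still applies.

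There is essentially no obstacle here once Lemma \ref{PermComp02} is in hand. The forward implication of that lemma is nothing but Lemma \ref{PermComp01} applied to shifts of $s$ (and in fact holds for every aperiodic binary word), while the converse is the genuinely Sturmian content and is where balancedness enters. Any real work therefore sits inside the cited Sturmian lemma and the definition of Sturmian, not in the present argument, so the proof is essentially a bookkeeping combination of these two facts.
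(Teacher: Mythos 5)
Your derivation is correct: Lemma \ref{PermComp02} says exactly that the equivalence relation "same subpermutation of length $n+2$" on starting positions coincides with "same factor of length $n+1$," so $\tau_s(n+2)=\rho_s(n+1)=n+2$, and the re-indexing (including the trivial $m=2$ case) is fine. Note that the paper does not actually prove this theorem --- both it and Lemma \ref{PermComp02} are imported from \cite{Makar09} without proof --- so there is no in-paper argument to compare against; your reduction is the standard one, and you correctly locate the genuinely Sturmian content in the converse direction of the cited lemma rather than in the bookkeeping.
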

If $s$ is a Sturmian word, then $d(s)$ is not Sturmian.  The word $d(s)$ will contain both $00$ and $11$ as factors and is not balanced.  Thus we know $\tau_{d(s)}(n) > n$.

Fix a Sturmian word $s$ over $\{ 0, 1 \}$.  Since $s$ is balanced, there is some $k >0$ so that for $\alpha, \beta \in \{ 0, 1 \}$, with $\alpha \neq \beta$, every $\alpha$ is followed by either $k$ or $k-1$ $\beta$'s.  So consecutive $\alpha$'s will look like either $\alpha \beta^k \alpha$ or $\alpha \beta^{k-1} \alpha$.  For example consider the Fibonacci word, $t= 01001010010010100101\cdots$, where consecutive $1$'s look like either $1001$ or $101$.

Let $d(s)$ be the image of $s$ under the doubling map.  Let $\pi_s$ be the infinite permutation associated to $s$, and $\pi_{d(s)}$ be the infinite permutation associated to $d(s)$.  We will now calculate the permutation complexity of $d(s)$.  By Lemma $\ref{PermCompOfCompliment}$ we may assume there is a natural number $k > 0$ so that each $1$ is followed by either $0^k1$ or $0^{k-1}1$, because $d(s)$ and $d(\overline{s})$ have the same permutation complexity.  There will be $k+1$ classes of factors of $s$, which are $C_0 = 0^{k}$, $C_1 = 0^{k-1}1$, $\cdots$, $C_{k-1} = 01$, $C_{k} = 10$.  For each $i \in \N$, $s[i] = s_i s_{i+1} \cdots$ will have exactly one the above classes of words as a prefix.  Since Sturmian words are uniformly recurrent ($\cite{CovHed73}$), there is an $N \in \N$ so that each factor of $s$ of length $n \geq N$ will contain each of $C_0$, $C_1$, $\ldots$, $C_k$.

Let $u = s[a,a+n-1]$ and $v = s[b,b+n-1]$, $a \neq b$, be factors of $s$ of length $n \geq N$, so $C_j$ is a factor of both $u$ and $v$ for each $0 \leq j \leq k$.  For $0 \leq j \leq k$ define 
$$ U_j = \{ \hspace{0.5ex} 0 \leq i \leq n-1 \hspace{0.5ex} | \hspace{0.5ex} T[a+i] \text{ has } C_j \text{ as a prefix.} \hspace{0.5ex} \} $$
$$ V_j = \{ \hspace{0.5ex} 0 \leq i \leq n-1 \hspace{0.5ex} | \hspace{0.5ex} T[b+i] \text{ has } C_j \text{ as a prefix.} \hspace{0.5ex} \} $$
and $\abs{U_0} + \abs{U_1} + \cdots + \abs{U_k} = \abs{V_0} + \abs{V_1} + \cdots + \abs{V_k} = n$.  Since $\abs{u} = \abs{v} \geq N$ we know for each $j$ there is an occurrence of $C_j$ in both $u$ and $v$ so $\abs{U_j} \geq 1$ and $\abs{V_j} \geq 1$.  Let $p = \pi_s[a,a+n+k-1]$ and $q = \pi_s[b,b+n+k-1]$ be subpermutations of $\pi_s$.  Then define subpermutations $\delta(p) = p' = \pi_{d(s)}[2a,2a+2n-1]$ and $\delta(q) = q' = \pi_{d(s)}[2b,2b+2n-1]$ as in Proposition $\ref{CalcTheFwdImage}$.  The following lemma concerns the relationship of $p$ and $q$ to $p'$ and $q'$.

\begin{lemma}
\label{ForSturm_pISqIFFppISqp}
For the Sturmian word $s$, let $p$, $q$, $p'$, and $q'$ be as above.  Then $p = q$ if and only if $p' = q'$.
\end{lemma}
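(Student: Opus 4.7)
The forward implication follows immediately from Lemma \ref{pISqTHENppISqp} applied with $\w = s$, so the plan focuses on the converse. Assuming $p' = q'$, the strategy is to show that the forms $s[a, a+n+k-2]$ and $s[b, b+n+k-2]$ of $p$ and $q$ coincide, since then Lemma \ref{PermComp02} delivers $p = q$.

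I would first extract two structural consequences of $p' = q'$. The common form of $p'$ and $q'$ is the length-$(2n-1)$ prefix of $d(u)$, and since doubling duplicates every letter this prefix uniquely recovers $u$; hence $u = v$, giving $s[a, a+n-1] = s[b, b+n-1]$. Second, each $p'_{2i}$ ranks the shift $d(s)[2a+2i] = d(s[a+i])$, and since doubling is order-preserving by Lemma \ref{ImageOfTheCiClasses}, the relative order of the even-indexed entries of $p'$ agrees with that of $L^k(p)$; coordinate equality of $p'$ with $q'$ then forces $L^k(p) = L^k(q)$.

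To promote the factor equality from length $n$ to length $n+k-1$, I would track the $C_j$ class of each shift $s[a+i]$ for $i \in \{0, \ldots, n-1\}$, and likewise for $s[b+i]$. For $i \leq n - k$ the class is encoded entirely within $u$ (each $C_j$ has length at most $k$), so $u = v$ already forces the classes at position $i$ to agree. For the remaining $k-1$ tail positions I would invoke Proposition \ref{CalcTheFwdImage}: combining $p'_{2i} = q'_{2i}$ and $p'_{2i+1} = q'_{2i+1}$ with $L^k(p) = L^k(q)$ yields at every $i$ cumulative identities linking the $S_j$-sums built from $(U_j)$ for $p$ to those built from $(V_j)$ for $q$. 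Provided $n$ is large enough that every class $C_j$ appears at some position $i \leq n - k$ (guaranteed by uniform recurrence of Sturmian words), the matched contributions at those positions force $|U_j| = |V_j|$ for all $j$, so the $S_j$ sequences for $p$ and $q$ coincide termwise. Strict monotonicity of these sequences, provided by $|U_j| \geq 1$, then forces the classes to agree at the tail positions as well. Knowing all classes together with $u = v$ reconstructs $s[a, a+n+k-2] = s[b, b+n+k-2]$, and Lemma \ref{PermComp02} closes the argument.

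The main obstacle I expect is the last step of the previous paragraph: one must rule out the degenerate case in which some class $C_j$ has all of its occurrences concentrated in the final $k-1$ positions, as otherwise the matched positions $i \leq n - k$ do not pin down $|U_j|$. This is exactly where the Sturmian hypothesis enters most essentially, via uniform recurrence (guaranteeing each class appears early enough) and via Lemma \ref{PermComp02} (converting the resulting factor identity into the permutation identity $p = q$). Lemma \ref{PermCompOfCompliment} is invoked implicitly to reduce to the normalization $k_1 = 1$ adopted in this section.
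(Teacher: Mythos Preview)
Your approach is correct in outline but takes a genuinely different route from the paper's. The paper argues by contrapositive: if $p \neq q$, then by Lemma~\ref{PermComp02} their forms $s[a,a+n+k-2]$ and $s[b,b+n+k-2]$ differ, and the least index of disagreement lies either among the first $n$ positions (so $u \neq v$ and Corollary~\ref{Corr_uNEQv} gives $p' \neq q'$) or among the last $k-1$ positions. In the latter case $u=v$ forces $u_{n-1}=0$ (since $u_{n-1}=1$ would make both tails equal to $0^{k-1}$ by the Sturmian gap condition), whence $s[a+n-1]$ and $s[b+n-1]$ carry different $C_j$ prefixes, and Lemma~\ref{DiffCjSuffixSameLk} immediately yields $p'_{2n-2} \neq q'_{2n-2}$.

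You instead work directly, reconstructing the full form from $p'$ by first establishing $|U_j|=|V_j|$ for every $j$ via Proposition~\ref{CalcTheFwdImage} at the positions $i \leq n-k$, and only then reading off the tail classes. This works, but the obstacle you flag---that every class must occur at some position $i \leq n-k$---is not quite discharged by the $N$ already fixed in the setup; you would need something like $n \geq N+k-1$, a harmless but unstated strengthening. More to the point, the detour through $|U_j|=|V_j|$ is unnecessary: the contrapositive of Lemma~\ref{DiffCjSuffixSameLk} already says that $p'_{2n-2}=q'_{2n-2}$ together with $L^k(p)=L^k(q)$ forces the classes at position $n-1$ to agree, which (with the Sturmian gap structure) is all one needs to match the tails. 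The paper's argument is shorter because it exploits that prepared lemma and the $u_{n-1}=1$ observation directly, whereas your route effectively re-derives similar content through the explicit formulas of Proposition~\ref{CalcTheFwdImage}.
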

\begin{proof}
If $p=q$, then it follows from Lemma $\ref{pISqTHENppISqp}$ that $p' = q'$.

Then suppose that $p \neq q$.  Thus $p$ and $q$ have a different form by Lemma $\ref{PermComp02}$.  Thus there is an $0 \leq i \leq n+k-2$ so that, without loss of generality, $p_i < p_{i+1}$ and $q_i > q_{i+1}$.  We will look at the least $i$ where this happens and it will be handled in two cases.  First when $0 \leq i \leq n-1$, and then when $n \leq i \leq n+k-2$.


$\textbf{Case a:}$  Suppose $0 \leq i \leq n-1$ is the least $i$ where $p_i < p_{i+1}$ and $q_i > q_{i+1}$.  Then $p' \neq q'$ follows from Corollary $\ref{Corr_uNEQv}$.


$\textbf{Case b:}$  Suppose $n \leq i \leq n+k-2$ is the least $i$ where $p_i < p_{i+1}$ and $q_i > q_{i+1}$.  Thus we know $u = s[a,a+n-1] = s[b,b+n-1] = v$, and so $L^k(p) = L^k(q)$ by Lemma $\ref{PermComp02}$.  

If $u_{n-1} = v_{n-1} = 1$, then both $u$ and $v$ are followed by $0^{k-1}$ so $s[a,a+n+k-2] = s[b,b+n+k-2] = u0^{k-1}$ and $p = q$ contradicting the assumption.  Thus $u_{n-1} = v_{n-1} = 0$, and letting $m = i-n+1$ 
$$ s[a] = u0^{m}1 \cdots, \hspace{4.0ex} s[b] = u0^{m-1}1 \cdots. $$
Thus we can see that $s[a+n-1]$ and $s[b+n-1]$ each have a different $C_j$ class as a prefix.  So by Lemma $\ref{DiffCjSuffixSameLk}$, $\abs{p'_{2n-2} - q'_{2n-2}} \geq 1$ and $\abs{p'_{2n-1} - q'_{2n-1}} \geq 1$ and thus $p' \neq q'$.

Therefore $p = q$ if and only if $p = q$.
$\qed$
\end{proof}
Thus the map $\delta$ is injective when applied to permutations associated with a Sturmian word, and is therefore bijective.  When Lemma $\ref{ForSturm_pISqIFFppISqp}$ is used with Proposition $\ref{DubRestAreBijection}$ we see the maps $\delta_L$ and $\delta_R$ are also injective, and thus are bijections.  We will now show the map $\delta_M$ is also injective when applied to permutations associated with a Sturmian word.


\begin{lemma}
\label{SturmianDubRestAreBijection}
For the Sturmian word $s$, let $p$, $q$, $p'$, and $q'$ be as above.  Then $p' = q'$ if and only if $M(p') = M(q')$.
\end{lemma}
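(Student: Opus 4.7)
The forward direction is trivial: if $p' = q'$, then by definition $M(p') = M(q')$.

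For the reverse direction, suppose $M(p') = M(q')$. Applying Claim~$\ref{RestEqualThenduISdv}$ with $f = M$ gives $d(u) = d(v)$, and hence $u = v$. Argue by contradiction: suppose $p' \neq q'$. Then by Lemma~$\ref{ForSturm_pISqIFFppISqp}$, $p \neq q$, and the situation must match Case~b of that lemma's proof. That is, $u_{n-1} = 0$, the word $u$ has a maximal $0$-suffix of some length $t$ with $1 \leq t \leq k-1$, and WLOG the shifts $s[a+n-1]$ and $s[b+n-1]$ have $C_{t-1}$ and $C_t$ as prefixes respectively. Since $u = v$, Lemma~$\ref{PermComp02}$ yields not only $L^k(p) = L^k(q)$ but in fact the stronger $L^{k-1}(p) = L^{k-1}(q)$, a Sturmian-specific identity we will call upon in the edge case below.

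The next step is to apply Proposition~$\ref{CalcTheFwdImage}$ to compute the differences $q'_l - p'_l$ explicitly. For $0 \leq i \leq n-1$, the $C_j$-class of $s[a+i]$ coincides with that of $s[b+i]$ whenever $i$ lies in the ``body'' $\{0, \ldots, n-t-1\}$ of $u$, while for $i$ in the $0$-tail they differ by exactly one in index. Combined with $L^k(p) = L^k(q)$, this yields: the differences $q'_l - p'_l$ vanish at most middle positions, are $\pm 1$ at ``exception'' middle positions (corresponding to body positions $i$ with $c_i \in \{0, 1, \ldots, t-1, t\}$ under my doubling map), and have magnitude $|A_{t-1}|$ and $|A_t|$ at positions $2n-2$ and $2n-1$ respectively, where $A_j$ denotes the set of body positions whose shifts have $C_j$ as prefix.

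The equality $M(p') = M(q')$ forces the relative order of every pair of middle positions to be preserved. Pairing position $2n-2$ with each other middle position and imposing this preservation yields, for every middle $l \neq 2n-2$, that $p'_l$ must avoid a forbidden interval of integers of size $|A_{t-1}|$ immediately above $p'_{2n-2}$ (with an extra value forbidden for the exception positions, which is taken precisely by $p'_{2n-1}$). Since the values in this forbidden interval all lie in $\{1,\ldots,2n\}$ and must be realized by some position of $p'$, but only positions $0$ and $2n-1$ lie outside the middle, a counting argument delivers a contradiction whenever $|A_{t-1}| \geq 2$. By the uniform recurrence of Sturmian words, $|A_{t-1}| \geq 2$ holds for $n$ large enough, completing this branch of the argument.

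The main obstacle is the edge case $|A_{t-1}| = 1$, where the pigeonhole step leaves one value in the forbidden interval, which would have to equal $p'_0$ and force $p'_0 = p'_{2n-2} + 1$. Here I would bring in the stronger rank identity $L^{k-1}(p) = L^{k-1}(q)$: this controls the rank of $\pi_s(a+n)$ relative to $\pi_s(a+i)$ for each $i$, and when combined with the Sturmian balance condition together with Claim~$\ref{PCClaim01}$ applied at position $a+n-1$ (noting $s_{a+n-1} = 0$), it rules out the specific equality $p'_0 = p'_{2n-2} + 1$ that $M(p') = M(q')$ would require in this case. Hence in all cases we reach a contradiction, establishing $p' = q'$.
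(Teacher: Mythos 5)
Your argument goes down a genuinely different road from the paper's, and it has a real gap at exactly the hard point. The paper's proof never computes the numerical differences $q'_l - p'_l$ at all: it uses $M = R\circ L$ together with Proposition \ref{DubRestAreBijection} to locate the unique discrepancy between $L(p')$ and $L(q')$ at the pair of positions $(0,2n-2)$, and then converts that single inversion into $L^k(p)_0 < L^k(p)_{n-1}$ versus $L^k(q)_0 > L^k(q)_{n-1}$, contradicting $L^k(p)=L^k(q)$ (which follows from $u=v$ via Lemma \ref{PermComp02}). Your plan instead tries to compute all the differences explicitly from Proposition \ref{CalcTheFwdImage} and run a pigeonhole argument, and that plan does not close. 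First, your claimed difference structure is not right: for tail positions $i$ other than $n-1$, the class index of $s[b+i]$ is shifted by one relative to $s[a+i]$, so $q'_{2i}-p'_{2i}$ involves a whole term $\abs{V_{c_i}}$ of the partial sums and need not be in $\{0,\pm 1\}$ (compare the computation in Lemma \ref{DiffCjSuffixSameLk}(b), where the jump at $2n-2$ is $\abs{U_j}-1$ and at $2n-1$ is $\abs{U_{j+1}}$). Second, and decisively, the case where the relevant jump equals $1$ is not an avoidable ``edge case'': Lemma \ref{DiffCjSuffixSameLk} only guarantees the differences at $2n-2$ and $2n-1$ are at least $1$, and your treatment of this case consists of naming ingredients ($L^{k-1}(p)=L^{k-1}(q)$, balance, Claim \ref{PCClaim01}) and asserting they ``rule out'' $p'_0 = p'_{2n-2}+1$, with no actual derivation. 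That is the entire content of the lemma in this branch, so the proof is not complete.

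The fallback ``$\abs{A_{t-1}}\geq 2$ for $n$ large enough'' does not rescue it either: the lemma must hold for all $n\geq N$ with $N$ as fixed before Theorem \ref{PermCompOfDS} (each class occurring at least once, not twice), and enlarging $N$ silently would change the range of validity of the complexity formula $\tau_{d(s)}(n)=n+2k+1$. If you want a correct proof along your own lines you would have to carry out the omitted computation honestly; the cleaner route is the paper's, which reduces everything to the already-proved restriction analysis and to the injectivity of the form map for Sturmian words.
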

\begin{proof}
It should be clear that if $p' = q'$ then $M(p') = M(q')$.

Suppose $p' \neq q'$, and assume $M(p') = M(q')$.  For each pair of real numbers $i \neq j$ where $0 \leq i,j \leq 2n-3$, 
$$M(p')_i < M(p')_j \iff M(q')_i < M(q')_j \implies p'_{i+1} < p'_{j+1} \iff q'_{i+1} < q'_{j+1}. $$
From Claim $\ref{RestEqualThenduISdv}$ we know $d(u) = d(v)$ and $u=v$, so $L^k(p) = L^k(q)$ because they have the same form.  

From Proposition $\ref{DubRestAreBijection}$ we know $R(p') \neq R(q')$ and $L(p') \neq L(q')$ because $p' \neq q'$, but 
$$R(L(p')) = M(p') = M(q') = R(L(q')).$$
Thus there is an $1 \leq i \leq 2n-2$ so that $L(p')_0 < L(p')_i$ and $L(q')_0 > L(q')_i$.

If $1 \leq i \leq 2n-3$, we find a contradiction in the same fashion as in Proposition $\ref{DubRestAreBijection}$, case $\bf{(a.1)}$.  Thus we can assume that $i = 2n-2$ is the only $i$ so that $L(p')_0 < L(p')_i$ and $L(q')_0 > L(q')_i$.  Thus 
$$ L(p')_0 < L(p')_{2n-2} \implies p'_0 < p'_{2n-2} \implies p_0 < p_{n-1} \implies L^k(p)_0 < L^k(p)_{n-1} $$
$$ L(q')_0 > L(q')_{2n-2} \implies q'_0 > q'_{2n-2} \implies q_0 > q_{n-1} \implies L^k(q)_0 > L^k(q)_{n-1}, $$
and $L^k(p) \neq L^k(q)$, and so by Lemma $\ref{PermComp02}$ we see $u \neq v$ and $d(u) \neq d(v)$ which is a contradiction to the assumption.  Therefore $M(p') \neq M(q')$.

Therefore $p' = q'$ if and only if $M(p') = M(q')$.
$\qed$
\end{proof}

Thus we see, for a Sturmian word $s$, 
$$ p = q \iff \delta(p) = \delta(q) \iff \delta_M(p) = \delta_M(q) $$
and thus the map  $\delta_M$ is also injective, and thus is a bijection.  The following theorem will give the permutation complexity of the image of a Sturmian word under the letter doubling map.

\begin{theorem}
\label{PermCompOfDS}
Let $s$ be a Sturmian word over $\scr{A}$, where for $\alpha, \beta \in \scr{A}$, $\alpha \neq \beta$, there are strings of either $k$ or $k-1$ $\alpha$ between each $\beta$.  There is an $N$ so that each factor of $s$ of length at least $N$ will contain each of $\alpha^k$, $\alpha^{k-1}\beta$, \ldots, $\alpha \beta$, $\beta$.  For each $n \geq 2N$ the permutation complexity of $d(s)$ is
$$ \tau_{d(s)}(n) = n+2k+1 $$
\end{theorem}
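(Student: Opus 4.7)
The plan is to compute $\tau_{d(s)}(n) = |\mathrm{Perm}^{d(s)}_{ev}(n)| + |\mathrm{Perm}^{d(s)}_{odd}(n)|$ using the disjoint decomposition into even- and odd-starting subpermutations set up in Section $\ref{SecUnifRecWords}$, and then transport each piece back to a set of subpermutations of $\pi_s$ via one of the four maps $\delta, \delta_L, \delta_R, \delta_M$. For Sturmian input these maps are bijections by Lemma $\ref{ForSturm_pISqIFFppISqp}$, Proposition $\ref{DubRestAreBijection}$, and Lemma $\ref{SturmianDubRestAreBijection}$, so each count reduces immediately to a value of $\tau_s$, which Theorem $\ref{PermCompOfSturmian}$ identifies as the window length. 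Once this setup is in place, the theorem becomes a bookkeeping calculation split by the parity of $n$.

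First I would check that the hypothesis $n \geq 2N$ forces $m \geq N$ in the decompositions $n = 2m-1$ and $n = 2m$, so every application of a $\delta$-map below is in the range where the bijection lemmas apply. Then I would handle the odd case $n = 2m - 1$: the maps
\begin{align*}
\delta_L &: \mathrm{Perm}^s(m+k) \to \mathrm{Perm}^{d(s)}_{ev}(2m-1) \\
\delta_R &: \mathrm{Perm}^s(m+k) \to \mathrm{Perm}^{d(s)}_{odd}(2m-1)
\end{align*}
are bijections, so each target set has cardinality $\tau_s(m+k) = m+k$, giving
$$\tau_{d(s)}(n) = 2(m+k) = (2m-1) + 2k + 1 = n + 2k + 1.$$

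For the even case $n = 2m$ I would first use the bijection $\delta : \mathrm{Perm}^s(m+k) \to \mathrm{Perm}^{d(s)}_{ev}(2m)$ to count the even-starting subpermutations as $m+k$. The odd-starting subpermutations of length $2m$ are reached by shifting the window by one and invoking $\delta_M$ with parameter $m+1$, yielding the bijection $\delta_M : \mathrm{Perm}^s(m+k+1) \to \mathrm{Perm}^{d(s)}_{odd}(2(m+1)-2) = \mathrm{Perm}^{d(s)}_{odd}(2m)$, which contributes $\tau_s(m+k+1) = m+k+1$. Summing,
$$\tau_{d(s)}(n) = (m+k) + (m+k+1) = 2m + 2k + 1 = n + 2k + 1.$$

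The main technical obstacle has already been resolved in the preceding sections, where injectivity of all four maps was verified for Sturmian input; what remains here is essentially bookkeeping. The only mild asymmetry to flag is that the odd-starting, even-length piece $\mathrm{Perm}^{d(s)}_{odd}(2m)$ cannot be hit by $\delta_R$ (since $\delta_R$ always lands in odd lengths) and must instead be reached by $\delta_M$ applied to a window of size $m+k+1$; this is precisely what produces the $+1$ that distinguishes the even-$n$ count from twice the odd-$n/2$ count, and it is exactly absorbed by the uniform formula $n + 2k + 1$.
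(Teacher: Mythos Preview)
Your proposal is correct and is essentially identical to the paper's own proof: both split $\mathrm{Perm}^{d(s)}(n)$ into the even- and odd-starting pieces, invoke the bijectivity of $\delta$, $\delta_L$, $\delta_R$, $\delta_M$ established in Lemmas~\ref{ForSturm_pISqIFFppISqp} and~\ref{SturmianDubRestAreBijection} and Proposition~\ref{DubRestAreBijection}, reduce each piece to $\tau_s(m+k)$ or $\tau_s(m+k+1)$, and then apply Theorem~\ref{PermCompOfSturmian} to finish the arithmetic. Your added remark that the odd-starting even-length piece must be handled via $\delta_M$ at window $m+1$ rather than $\delta_R$ is a helpful clarification, but the substance matches the paper exactly.
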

\begin{proof}
Let $s$ be a Sturmian word as in the hypothesis, and let $n \geq 2N$.  Then there is $m \geq N$ so that either $n = 2m$ or $n = 2m-1$.  Since $s$ is Sturmian, each of $\delta$, $\delta_L$, $\delta_R$, and $\delta_M$ 
are bijections, and so

$$ \text{
\begin{tabular}{ l  l }
 $\abs{\mathrm{Perm}^{d(s)}_{ev}(2m-1)} = \abs{\mathrm{Perm}^s(m+k)}$, & $\abs{\mathrm{Perm}^{d(s)}_{ev}(2m)} = \abs{\mathrm{Perm}^s(m+k)}$, \\ \\
 $\abs{\mathrm{Perm}^{d(s)}_{odd}(2m-1)} = \abs{\mathrm{Perm}^s(m+k)}$, & $\abs{\mathrm{Perm}^{d(s)}_{odd}(2m)} = \abs{\mathrm{Perm}^s(m+k+1)}.$
\end{tabular}
} $$
%

Thus 
\begin{align*}
\tau_{d(s)}(2m-1) &= \abs{\mathrm{Perm}^{d(s)}(2m-1)} = \abs{\mathrm{Perm}^{d(s)}_{ev}(2m-1)} + \abs{\mathrm{Perm}^{d(s)}_{odd}(2m-1)} \\
& = (m+k) + (m+k) = (2m-1) + 2k + 1 \\
\\
\tau_{d(s)}(2m) &= \abs{\mathrm{Perm}^{d(s)}(2m)} = \abs{\mathrm{Perm}^{d(s)}_{ev}(2m)} + \abs{\mathrm{Perm}^{d(s)}_{odd}(2m)} \\
& = (m+k) + (m+k+1) = 2m + 2k + 1 
\end{align*}
Therefore for either $n = 2m$ or $n = 2m-1$, $\tau_{d(s)}(n) = n + 2k + 1$.
$\qed$
\end{proof}

\section{Thue-Morse Word}
\label{SecThueMorseWord}

In this section we will investigate the permutation complexity of $d(T)$, the image of the Thue-Morse word, $T$, under the doubling map, $d$.  The Thue-Morse word is:
$$ T = 0110 1001 1001 0110 1001 0110 0110 1001 \cdots,$$
and the Thue-Morse morphism is:
$$\mu_T:0 \rightarrow 01, \hspace{1.5ex} 1 \rightarrow 10. $$
This word was introduced by Axel Thue in his studies of repetitions in words ($\cite{Thue12}$).  For a more in depth look at further properties, independent discoveries, and applications of the Thue-Morse word see $\cite{AllSha99}$.  

A nice property of the factors of $T$ is that any factor of length 5 or greater contains either $00$ or $11$.  Another interesting property is that for any $i \in \N$, $T[2i,2i+1]$ will be either 01 or 10.  Thus any occurrence of $00$ or $11$ must be a factor of the form $T[2i+1,2i+2]$ for some $i \in \N$.  Therefore any factors $T[2i,2i+n]$ and $T[2j+1,2j+1+n]$ where $n \geq 4$ cannot be equal based on the location of the factors $00$ or $11$.

The factor complexity of the Thue-Morse word was computed independently by two groups in 1989 ($\cite{Brlek89}$ and $\cite{LucaVarr89}$).  The calculation of the permutation complexity of $d(T)$ will use the formula for the factor complexity of $T$.  We will use the formula calculated by S. Brlek.

\begin{proposition}
\emph{($\cite{Brlek89}$)}
\label{TMFactComplexity}
For $n \geq 3$, the function $\rho_T(n)$ is given by 
$$ \rho_T(n) = \begin{cases} 
6 \cdot 2^{r-1} + 4p & 0 < p \leq 2^{r-1} \\
8 \cdot 2^{r-1} + 2p & 2^{r-1} < p \leq 2^{r}
\end{cases} $$
where $r$ and $p$ are uniquely determined by the equation 
$$ n = 2^r + p + 1, \hspace{4.0ex} 0 < p \leq 2^r $$
\end{proposition}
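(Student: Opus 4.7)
The plan is to derive Brlek's closed form by exploiting the substitutive structure of $T$ as a fixed point of $\mu_T$. The key structural observation, proved by inspection of the image patterns $\mu_T(0) = 01$ and $\mu_T(1) = 10$, is that every factor $w$ of $T$ with $|w| \geq 2$ admits a decomposition $w = x\,\mu_T(v)\,y$ with $x,y \in \{\varepsilon, 0, 1\}$ and $v$ a uniquely determined shorter factor of $T$; moreover, the parity of the starting position of $w$ dictates whether $x$ is empty. In words, $\mu_T$ is a recognizable morphism, and this is the linchpin of the counting.

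From this recognizability I would extract the two-scale recurrence
\[
\rho_T(2n) = \rho_T(n) + \rho_T(n+1), \qquad \rho_T(2n+1) = 2\rho_T(n+1),
\]
valid for $n \geq 3$. For even lengths, factors of length $2n$ split according to whether they begin at an even or an odd position in $T$: even-aligned occurrences are in bijection with factors $v$ of length $n$ via $v \mapsto \mu_T(v)$, while odd-aligned occurrences correspond to $\mu_T(v)$ with the first and last letter removed, requiring a $v$ of length $n+1$. A parallel parity analysis yields the odd-length formula. The base cases $\rho_T(3)=6$, $\rho_T(4)=10$, $\rho_T(5)=12$ are verified by direct enumeration using $T = 0110100110010110\cdots$.

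With the recurrence in hand, I would finish by induction on $r$ in the unique decomposition $n = 2^r + p + 1$ with $0 < p \leq 2^r$. One checks that doubling $n$ (or $n$ and adding $1$) lands in the next level $r+1$ with a parameter $p'$ explicitly determined by $p$, and that the dichotomy $p \leq 2^{r-1}$ versus $p > 2^{r-1}$ at level $r$ pulls back from the analogous dichotomy at level $r+1$. Substituting the inductive expression for $\rho_T(n)$ and $\rho_T(n+1)$ into the recurrence then reproduces $6 \cdot 2^{r} + 4p'$ in the first case and $8 \cdot 2^{r} + 2p'$ in the second.

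The main obstacle is establishing the recognizability of $\mu_T$ carefully enough to make the counts sharp. One must verify that no factor admits two genuinely distinct parsings $x\,\mu_T(v)\,y = x'\,\mu_T(v')\,y'$, and that the boundary cases $x = \varepsilon$ versus $x \in \{0,1\}$ are handled without double-counting. This is standard for Thue-Morse-type morphisms and follows from the synchronization fact that within $T$ each block $T[2i]\,T[2i+1]$ consists of two distinct letters, so an occurrence of $00$ or $11$ pins down the $\mu_T$-parsing. Once synchronization is secured the rest of the argument is bookkeeping.
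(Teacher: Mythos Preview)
The paper does not prove this proposition at all: it is quoted verbatim from Brlek's paper and carries only the citation, with no argument supplied. So there is no ``paper's own proof'' to compare against; the result is imported as background.

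Your sketch is essentially Brlek's original argument and is correct in outline. The synchronization property of $\mu_T$ (that every occurrence of $00$ or $11$ in $T$ straddles a block boundary $T[2i+1,2i+2]$) does give unique desubstitution, and the two recurrences
\[
\rho_T(2n) = \rho_T(n) + \rho_T(n+1), \qquad \rho_T(2n+1) = 2\rho_T(n+1)
\]
follow exactly as you describe: factors of length $2n$ split into those of the form $\mu_T(v)$ with $|v|=n$ and those obtained by stripping the outer letters from $\mu_T(v')$ with $|v'|=n+1$, and these two families are disjoint once the length forces an occurrence of $00$ or $11$. The odd-length case is analogous. Your base cases are correct, and the induction on $r$ is routine. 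One small point worth tightening: the recurrences already hold for $n \geq 2$, not just $n \geq 3$, and you will want this slightly wider range when unwinding the induction down to the smallest values of $r$.
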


Let $\pi_T$ be the infinite permutation associated to the Thue-Morse word $T$.  In $\cite{Widmer10}$, the permutation complexity of $T$ was calculated.  
\begin{theorem}
\emph{($\cite{Widmer10}$)}
\label{PermCompIsTheFormula}
For any $n \geq 6$, where $n = 2^r + p$ with $0 < p \leq 2^r$,
$$ \tau_T(n) = 2(2^{r+1}+p-2).$$
\end{theorem}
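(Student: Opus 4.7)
The plan is to combine the lower bound from Lemma~\ref{PermComp01} with an explicit count of when two distinct subpermutations of length $n$ share a common form. The form of a subpermutation of $\pi_T$ of length $n$ is a factor of $T$ of length $n-1$, so Lemma~\ref{PermComp01} already yields $\tau_T(n) \geq \rho_T(n-1)$. The problem then reduces to determining, for each $w \in \scr{F}_T(n-1)$, how many subpermutations of length $n$ have $w$ as their form, and summing these multiplicities.

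The multiplicity attached to a factor $w = T[a,a+n-2]$ is controlled by iterating Claim~\ref{PCClaim01}(3): the order of $\pi_T(a+i)$ and $\pi_T(a+j)$ is decided by the smallest $k$ at which $T_{a+i+k} \neq T_{a+j+k}$. If for every pair of positions this minimal $k$ lies inside the window covered by $w$, the subpermutation is uniquely determined by $w$ and contributes exactly $1$. Otherwise the comparison depends on how the particular occurrence of $w$ in $T$ is extended to the right, so $w$ may carry two or more subpermutations of length $n$. Counting $\tau_T(n)$ therefore splits into counting factors of $T$ of length $n-1$, plus counting the ``ambiguous'' factors whose right extensions in $T$ resolve their undetermined pairs in genuinely different ways.

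The essential structural input is the self-similarity $T = \mu_T(T)$ together with Proposition~\ref{TMFactComplexity}. Using the level-$r$ block decomposition of $T$ into images of $\mu_T^r(0)$ and $\mu_T^r(1)$, I would show that the ambiguous factors of length $n-1$ are precisely certain bispecial extensions at scale $2^r$, enumerate them as $p$ ranges over $(0,2^r]$, and verify that each such $w$ contributes exactly two subpermutations --- the $2$ in front of $2^{r+1}+p-2$ reflecting this binary branching. Induction on $r$, together with the explicit action of $\mu_T$ on subpermutations inherited from the doubling-map analysis of earlier sections, should then propagate both counts between scales.

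The main obstacle is the precise description of these ambiguous factors: identifying which bispecial factors of $T$ produce a genuine branch, confirming that the branching is always binary rather than larger, and handling the transition at $p = 2^{r-1}$ where Proposition~\ref{TMFactComplexity} itself switches between its two cases. Once the ambiguous-factor count is shown to equal $2(2^{r+1}+p-2) - \rho_T(n-1)$, the closed form $\tau_T(n) = 2(2^{r+1}+p-2)$ follows by direct arithmetic from Proposition~\ref{TMFactComplexity}.
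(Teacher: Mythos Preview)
The paper does not prove this theorem: it is quoted as an external result from \cite{Widmer10}, so there is no in-paper proof to compare against. What the present paper does import from \cite{Widmer10} is exactly the structural machinery you are gesturing at --- Theorem~\ref{SameFormIFFCompPair} (distinct subpermutations with the same form are complementary pairs) and Proposition~\ref{LengthOfAlphaForAGBForN} (for which lengths such pairs exist). Your ``each ambiguous factor contributes exactly two subpermutations'' is precisely the content of Theorem~\ref{SameFormIFFCompPair}, and the identification of which factors are ambiguous is precisely Proposition~\ref{LengthOfAlphaForAGBForN}. So the shape of your plan matches the actual argument in \cite{Widmer10}, but you have not supplied proofs of those two ingredients; you have only named the obstacle.

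There is also a circularity in your sketch. You propose to use ``the explicit action of $\mu_T$ on subpermutations inherited from the doubling-map analysis of earlier sections''. The doubling map $d$ studied in Sections~\ref{SecUnifRecWords}--\ref{SecRestrictions} sends $a\mapsto aa$; it is not the Thue--Morse morphism $\mu_T$, and the two do not interact in any direct way here. More importantly, the results of Section~\ref{SecThueMorseWord} about $d(T)$ \emph{use} Theorem~\ref{PermCompIsTheFormula} as input (see the proof of Theorem~\ref{DubTMPermComp}), so they cannot be invoked to establish it. If you want to carry out your plan, you must prove the complementary-pair structure for $\pi_T$ from scratch, via the recursive structure of $\mu_T$ itself, as in \cite{Widmer10}.
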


We will now investigate the permutation complexity of $d(T)$.  To begin, we consider complementary pairs which occur in $\pi_T$.

\begin{theorem}
\emph{($\cite{Widmer10}$)}
\label{SameFormIFFCompPair}
Let $p$ and $q$ be distinct subpermutations of $\pi_T$.  Then $p$ and $q$ have the same form if and only if $p$ and $q$ are a complementary pair of type $k$, for some $k \geq 1$.
\end{theorem}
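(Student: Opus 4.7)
The biconditional divides into an easy reverse direction and a harder forward direction. For the reverse direction (complementary pair implies same form), the argument is purely combinatorial and uses no specific property of $T$. Let $p = (\alpha_1 \cdots \alpha_k \lambda_1 \cdots \lambda_l \beta_1 \cdots \beta_k)$ and $q = (\beta_1 \cdots \beta_k \lambda_1 \cdots \lambda_l \alpha_1 \cdots \alpha_k)$ with $\alpha_i = \beta_i + \varepsilon$ and $\varepsilon \in \{-1,1\}$. I would verify the descent condition $p_i < p_{i+1} \iff q_i < q_{i+1}$ at each index. Inside an $\alpha$- or $\beta$-block the translation $x \mapsto x + \varepsilon$ preserves the integer ordering, so all internal descents match; the $\lambda$-block contributes identical descents to both permutations. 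The two interface descents (between $\alpha_k$ or $\beta_k$ and $\lambda_1$, and between $\lambda_l$ and $\alpha_1$ or $\beta_1$) rest on the key observation that $\alpha_i$ and $\beta_i$ are consecutive integers, so no $\lambda_j$ can lie strictly between them and $\lambda_j < \alpha_i \iff \lambda_j < \beta_i$.

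For the forward direction, suppose $p = \pi_T[a, a+n]$ and $q = \pi_T[b, b+n]$ are distinct and share the same form. By the Thue-Morse analog of Lemma \ref{PermComp01}, the shared form forces $T[a, a+n-1] = T[b, b+n-1] = u$, so both windows cover the same factor of $T$. I would then analyze precisely how $p$ and $q$ can differ. For any pair $i \neq j$ with $T_{a+i} = T_{a+j}$ (equivalently $T_{b+i} = T_{b+j}$), the relative order of $p_i$ and $p_j$ is decided by the lexicographic comparison of the continuations $T[a+i+1,\infty]$ and $T[a+j+1,\infty]$, and similarly for $q$. Any discrepancy between $p$ and $q$ must therefore be traced to differences in the right tails $T[a+n+1,\infty]$ and $T[b+n+1,\infty]$ and, via internal shifts, to left-boundary behaviour. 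The goal is to identify the disagreement set $D = \{i : p_i \neq q_i\}$ and show that it decomposes as two matched end-blocks of a common size $k$, whose values are related by a single shift of $\pm 1$---this is exactly the type-$k$ structure.

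The main obstacle is the forward direction, specifically ruling out irregular disagreement patterns. The leverage is Thue-Morse specific: $T$ is the fixed point of $\mu_T$, it is overlap-free, and two distinct occurrences of a sufficiently long factor are tightly constrained through the orbit of $\mu_T$. A natural route is induction on $n$, using $\mu_T$ to reduce the window to a strictly shorter one and peeling off the boundary contributions; alternatively, one may pursue a direct case analysis on the first position at which the continuations $T[a+n,\infty]$ and $T[b+n,\infty]$ diverge, using Thue-Morse combinatorics to propagate the consequences symmetrically to the left end of the window. The delicate point throughout is to force a single shared sign $\varepsilon$ for the shift, rather than allowing the matched blocks at the two ends to shift independently; the formula $\tau_T(n) = 2(2^{r+1}+p-2)$ from Theorem \ref{PermCompIsTheFormula} serves as a quantitative consistency check that only such coherent pairs can occur.
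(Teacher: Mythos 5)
There is a genuine gap. First, note that the paper does not prove this theorem at all: it is quoted from \cite{Widmer10}, so there is no in-paper argument to match. Judged on its own, your reverse direction (complementary pair $\Rightarrow$ same form) is correct and essentially complete --- the observation that $\alpha_i$ and $\beta_i$ are consecutive integers, so no other entry of the permutation separates them and every comparison against a $\lambda_j$ (or against an entry of the other end-block) is unchanged by the shift $x \mapsto x + \varepsilon$, is exactly the right point, and it needs no property of $T$. But the forward direction, which is the entire content of the theorem, is not proven: you correctly reduce ``same form'' to ``same underlying factor $u$'' via Claim \ref{PCClaim01}, and you correctly name the goal (the disagreement set $\{i : p_i \neq q_i\}$ must consist of two end-blocks of equal size whose values are swapped by a single shift $\pm 1$), but then you only list candidate strategies (``a natural route is induction on $n$ using $\mu_T$ \ldots alternatively \ldots'') without executing either. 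Ruling out irregular disagreement patterns --- e.g.\ a disagreement confined to one end, or two end-blocks shifted by different amounts or in opposite directions --- is precisely where the overlap-freeness and the $\mu_T$-structure of $T$ must be deployed, and no such argument appears.

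A second, smaller problem: you invoke the formula $\tau_T(n) = 2(2^{r+1}+p-2)$ of Theorem \ref{PermCompIsTheFormula} as a ``quantitative consistency check.'' That formula is itself a result of \cite{Widmer10} whose derivation rests on this very structural classification of same-form pairs, so it cannot be used, even heuristically, as evidence inside a proof of the present theorem without circularity. To close the gap you would need to carry out one of your proposed routes in full: for instance, compare the continuations $T[a+n+1]$ and $T[b+n+1]$ at their first point of divergence, show that the resulting order reversal propagates backward only through positions whose suffixes agree with the suffix at position $a+n$ on a maximal common prefix, and use the synchronization properties of $\mu_T$ to show these positions form a contiguous terminal block matched by an initial block with a coherent sign $\varepsilon$.
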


The left and right restrictions preserve complementary pairs of type $k \geq 2$, and middle restrictions preserve complementary pairs of type $k \geq 3$.  Proposition $\ref{ImageOfTypeK}$ follows directly from $\cite{Widmer10}$, Proposition 4.1.  We then see when complementary pairs of type $k$ can occur, for each $k \geq 0$.
\begin{proposition}
\emph{($\cite{Widmer10}$)}
\label{ImageOfTypeK}
Suppose $p = \pi_T[a,a+n]$ and $q = \pi_T[b,b+n]$ are a complementary pair of type $k \geq 1$.
\begin{itemize}
\item[(a)] $L(p)$ and $L(q)$ are a complementary pair of type $k-1$.
\item[(b)] $R(p)$ and $R(q)$ are a complementary pair of type $k-1$.
\item[(c)] $M(p)$ and $M(q)$ are a complementary pair of type $k-2$.
\end{itemize}
\end{proposition}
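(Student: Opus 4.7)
The plan is to track values carefully through each restriction operation and exploit the fact that the pair of values removed from $p$ versus from $q$ are consecutive integers. Once this is set up, (b) reduces to the same computation with roles swapped, and (c) follows by composition.

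For part (a), $L(p)$ deletes the final entry $\beta_k$ of $p$ while $L(q)$ deletes the final entry $\alpha_k$ of $q$, and these two values differ by exactly $\varepsilon \in \{-1,+1\}$. Each surviving value $v$ is renumbered to $v - [v > \beta_k]$ in $L(p)$ and to $v - [v > \alpha_k]$ in $L(q)$. The key arithmetic observation is that for any integer $v \notin \{\alpha_k, \beta_k\}$ one has $v > \beta_k \iff v > \alpha_k$, since no integer lies strictly between two consecutive integers. Hence every $\lambda_j$ and every surviving $\alpha_i, \beta_i$ with $i<k$ is renumbered to the same value in both $L(p)$ and $L(q)$.

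Next I would verify that the paired relation $\alpha_i = \beta_i + \varepsilon$ survives renumbering for $i<k$. The same observation applied to the pair $(\alpha_i, \beta_i)$ yields $[\beta_k > \alpha_i] = [\beta_k > \beta_i]$, so $\alpha_i$ and $\beta_i$ shift by the same amount and the gap $\varepsilon$ is preserved. The leftover value $\alpha_k$ in $L(p)$ and the leftover value $\beta_k$ in $L(q)$ each sit at the end of the "first $k$" block; a direct substitution gives $\alpha_k - [\beta_k<\alpha_k] = \min(\alpha_k, \beta_k) = \beta_k - [\alpha_k<\beta_k]$, so these two leftover values merge to a common renumbered value. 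Together with the matching $\lambda_j$'s they form the shared middle block of a complementary pair of type $k-1$, whose first $k-1$ positions are $\alpha_1', \ldots, \alpha_{k-1}'$ and last $k-1$ positions are $\beta_1', \ldots, \beta_{k-1}'$.

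Part (b) is the symmetric argument removing $\alpha_1$ from $p$ and $\beta_1$ from $q$; the roles of "first" and "last" simply swap. Part (c) I would then deduce by composition: since $M = R \circ L$, applying (a) to $(p,q)$ produces a type-$(k-1)$ complementary pair $(L(p), L(q))$, and applying (b) to that pair yields a type-$(k-2)$ complementary pair, which is exactly $(M(p), M(q))$. The only genuinely combinatorial step is the "no-integer-between-consecutive-integers" observation, used twice — once for the removed pair against the $\lambda_j$'s, once for the removed pair against a surviving pair $(\alpha_i, \beta_i)$. I do not anticipate a real obstacle beyond this; the remaining work is bookkeeping about which positions land in which block of the new complementary pair, and verifying the boundary cases $k=1,2$ in which "complementary pair of type $\leq 0$" simply means the two restrictions coincide.
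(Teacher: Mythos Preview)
The paper does not give its own proof of this proposition; it is imported from \cite{Widmer10} with the remark that it ``follows directly from \cite{Widmer10}, Proposition 4.1.'' So there is no in-paper argument to compare against.

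Your direct combinatorial verification is correct. The crucial observation --- that for any value $v\notin\{\alpha_k,\beta_k\}$ one has $[v>\alpha_k]=[v>\beta_k]$ because $\alpha_k,\beta_k$ are consecutive integers --- does exactly the work you claim: every $\lambda_j$ is renumbered identically in $L(p)$ and $L(q)$, and each surviving pair $(\alpha_i,\beta_i)$ with $i<k$ shifts by the same amount in both (here you also need $\alpha_i,\beta_i\notin\{\alpha_k,\beta_k\}$, which holds because all entries of a permutation are distinct). The merging computation $\alpha_k-[\alpha_k>\beta_k]=\min(\alpha_k,\beta_k)=\beta_k-[\beta_k>\alpha_k]$ correctly shows that the leftover $k$-th entry becomes a common value absorbed into the new shared middle block, yielding a complementary pair of type $k-1$. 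Part (b) is indeed the mirror image, and deducing (c) from $M=R\circ L$ is legitimate; for $k\in\{1,2\}$ the outcome ``type $\le 0$'' just means the two restrictions coincide, consistent with the paper's convention.
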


\begin{proposition}
\emph{($\cite{Widmer10}$)}
\label{LengthOfAlphaForAGBForN}
Let $n > 4$ be a natural number and let $p$ and $q$ be subpermutations of $\pi_T$ of length $n+1$ with the same form.  There exist $r$ and $c$ so that $n =2^r+c$, where $0 \leq c < 2^r$.  
\begin{itemize}
\item[(a)] If $0 \leq c < 2^{r-1}+1$, then either $p = q$ or $p$ and $q$ are a complementary pair of type $c+1$.
\item[(b)] If $2^{r-1}+1 \leq c < 2^r$, then $p = q$.
\end{itemize}
\end{proposition}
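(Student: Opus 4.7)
The plan is to combine Theorem \ref{SameFormIFFCompPair}, which reduces same-form pairs to complementary pairs, with the type-descent provided by Proposition \ref{ImageOfTypeK} and the explicit formulas of Theorem \ref{PermCompIsTheFormula} and Proposition \ref{TMFactComplexity}. Given $p$ and $q$ of length $n+1$ with the same form, Theorem \ref{SameFormIFFCompPair} yields that either $p = q$ or they form a complementary pair of some type $k \geq 1$; the work is to pin down $k$. I would proceed by strong induction on $n$.

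Case (b) is dispatched by counting. Every factor of $T$ of length $n$ is the form of either one or two subpermutations of length $n+1$, and a two-element class is a complementary pair by Theorem \ref{SameFormIFFCompPair}. Thus the number of complementary pairs at length $n+1$ equals $\tau_T(n+1) - \rho_T(n)$. Writing $n = 2^r + c$ and translating to the indexing $n = 2^{r'} + p' + 1$ of Proposition \ref{TMFactComplexity}, direct calculation shows the two formulas cancel exactly across the range $2^{r-1} + 1 \leq c < 2^r$, so no complementary pairs exist there and $p = q$.

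Case (a) is handled via matching upper and lower bounds on $k$. The upper bound $k \leq c+1$ uses Proposition \ref{ImageOfTypeK}(a): iterating the left restriction $c+1$ times sends a pair of type $k$ at length $2^r + c + 1$ to a pair of type $k - c - 1$ at length $2^r$. Writing $2^r - 1 = 2^{r-1} + (2^{r-1} - 1)$, the new length parameter falls into case (b) for $r \geq 3$, so by induction the descended pair must be trivial, forcing $k \leq c+1$. For the matching lower bound $k \geq c+1$, suppose instead $k \leq c$; iterating $k-1$ left restrictions yields a type-$1$ pair at length $n - k + 2$, and the inductive hypothesis applied to this strictly smaller length (valid for $k \geq 2$) forces $n - k + 1$ to be a power of $2$. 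But $c - k + 1 \in [1, 2^{r-1}]$ gives $2^r < n - k + 1 < 2^{r+1}$, contradicting this. The residual case $k = 1$ at length $n + 1$ itself, which is not reached by the inductive descent on length, is handled by a direct structural argument: type-$1$ pairs satisfy $L(p) = L(q)$ and $R(p) = R(q)$ by Proposition \ref{ImageOfTypeK}, and the resulting rigidity combined with the substitution $\mu_T$ forces $n + 1 = 2^r + 1$. The main obstacle is the index bookkeeping across the three parameterizations of $n$ (in the statement, in Theorem \ref{PermCompIsTheFormula}, and in Proposition \ref{TMFactComplexity}), especially at the boundaries $c \in \{0, 1\}$ where the regime for $\rho_T$ switches, together with base cases for small $r$ (notably $r = 2$) where the descent into case (b) is invalid and $\pi_T$ must be inspected directly.
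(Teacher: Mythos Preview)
The paper does not give a proof of this proposition; it is quoted from \cite{Widmer10} without argument, so there is nothing in the present paper to compare your sketch against.

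On the merits of your outline, the central difficulty is circularity. Your case (b) is dispatched by computing $\tau_T(n+1)-\rho_T(n)$ from Theorem \ref{PermCompIsTheFormula} and Proposition \ref{TMFactComplexity} and observing that the difference vanishes on the stated range. But Theorem \ref{PermCompIsTheFormula} is the main result of \cite{Widmer10}, and its proof there proceeds by first determining, for each length, which factors of $T$ are the form of exactly one subpermutation and which are the form of two --- precisely the content of Proposition \ref{LengthOfAlphaForAGBForN} together with Theorem \ref{SameFormIFFCompPair}. The complexity formula is therefore downstream of the proposition you are trying to prove, and invoking it here begs the question. Since your case (a) upper bound descends, via repeated left restriction, into an instance of case (b) and appeals to it inductively, the circularity propagates through the entire argument.

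A non-circular proof must work upward from the combinatorics of $\mu_T$ (desubstitution, overlap-freeness, and the explicit location of the right-special factors) to pin down the type of a same-form pair directly; the identity $\tau_T(n+1)=\rho_T(n)$ on the case-(b) range is then a corollary of that classification, not an input to it. The index bookkeeping you flag as the main obstacle is real but secondary to this dependency problem, and the ``direct structural argument'' you invoke for the residual $k=1$ case is also left as a placeholder rather than supplied.
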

Thus only subpermutations of length $2^r+1$, for some $r \geq 1$, can be complementary pair of type 1, and only subpermutations of length $2^r+2$, for some $r \geq 1$, can be complementary pair of type 2.

Now to calculate the permutation complexity of $d(T)$ we need to identify the classes of factors of $T$ with blocks of the same letter.  Since $T$ is overlap-free, and thus cube free, we can identify the $4$ classes of factors of $T$, which are $C_0 = 00$, $C_1 = 01$, $C_2 = 10$, and $C_3 = 11$.  For each $i \in \N$, $T[i] = T_i T_{i+1} \cdots$ will have exactly one the above classes of words as a prefix.  Since the Thue-Morse word is uniformly recurrent ($\cite{AllSha99}$), there is an $N \in \N$ so that each factor of $T$ of length $n \geq N$ will contain each of $C_0$, $C_1$, $C_2$, and $C_3$.  It is readily verified that any factor of length $n \geq 9$ will contain these 4 classes of words.

Let $u = T[a,a+n-1]$ and $v = T[b,b+n-1]$, $a \neq b$, be factors of $T$ of length $n \geq 9$, so $C_j$ is a factor of both $u$ and $v$ for each $0 \leq j \leq 3$.  Let $p = \pi_T[a,a+n+1]$ and $q = \pi_T[b,b+n+1]$ be subpermutations of $\pi_T$.  Then define subpermutations $\delta(p) = p' = \pi_{d(T)}[2a,2a+2n-1]$ and $\delta(q) = q' = \pi_{d(T)}[2b,2b+2n-1]$ as in Proposition $\ref{CalcTheFwdImage}$, with $k=2$. The following lemma concerns the relationship of $p$ and $q$ to $p'$ and $q'$.

\begin{lemma}
\label{TM_pISq_DiffForm}
Let $p$ and $q$ be subpermutations of length $n+2$ of $\pi_T$, with $n \geq 9$, and let $p' = \delta(p)$ and $q' = \delta(p)$.
\begin{itemize}
\item[(a)] If $n \neq 2^r-1$ or $2^r$ for any $r \geq 3$, $p = q$ if and only if $p' = q'$.
\item[(b)] If $n = 2^r-1$ or $2^r$ for some $r \geq 3$, $p$ and $q$ have the same form if and only if $p' = q'$.
\end{itemize}
\end{lemma}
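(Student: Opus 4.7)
The plan is to handle both directions simultaneously, leveraging the $C_j$ classification of shifts together with the complementary-pair structure specific to the Thue-Morse word $T$. The easy direction $p = q \Rightarrow p' = q'$ is Lemma \ref{pISqTHENppISqp} and already handles the reverse implication in (a); the content lies in the other direction of (a) and in both directions of (b).

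First I would prove the forward direction $p' = q' \Rightarrow$ ``$p$ and $q$ have the same form,'' which covers (b) in one shot. From $p' = q'$ the common form of $p'$ and $q'$ equals $d(T[a, a+n-1]) = d(T[b, b+n-1])$, so $T[a, a+n-1] = T[b, b+n-1]$. Reading off the even-indexed entries of $p'$ (via $L^2(p)_i < L^2(p)_j \iff p'_{2i} < p'_{2j}$) additionally yields $L^2(p) = L^2(q)$. Lemma \ref{DiffCjSuffixSameLk} then bars $T[a+n-1]$ and $T[b+n-1]$ from having different $C_j$ prefixes, for otherwise $|p'_{2n-2} - q'_{2n-2}| \geq 1$ would contradict $p' = q'$. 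Equal $C_j$ classes together with $T_{a+n-1} = T_{b+n-1}$ force $T_{a+n} = T_{b+n}$, hence $T[a, a+n] = T[b, b+n]$, i.e., $p$ and $q$ share the same form. For part (a) I would push further: setting $n+1 = 2^r + c$ with $0 \leq c < 2^r$, the hypothesis $n \neq 2^r-1, 2^r$ gives $c \geq 2$. By Proposition \ref{LengthOfAlphaForAGBForN} either $p = q$ or $p, q$ form a complementary pair of type $c+1 \geq 3$; in the latter case, two applications of Proposition \ref{ImageOfTypeK} produce a complementary pair $L^2(p), L^2(q)$ of type $c - 1 \geq 1$, contradicting $L^2(p) = L^2(q)$. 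Hence $p = q$.

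It remains to show the reverse direction of (b): if $p$ and $q$ share the same form, then $p' = q'$. If $p = q$ this is immediate; otherwise Proposition \ref{LengthOfAlphaForAGBForN} applied with $c \in \{0, 1\}$ makes $p, q$ a complementary pair of type $c+1 \in \{1, 2\}$. Two applications of Proposition \ref{ImageOfTypeK} collapse the type to at most $0$, giving $L^2(p) = L^2(q)$. Since $p$ and $q$ have the same form, the shared factor $T[a, a+n] = T[b, b+n]$ determines the $C_j$ prefix of each $T[a+i]$ and $T[b+i]$ identically for $0 \leq i \leq n-1$, so Corollary \ref{Cor_pNEQq_SameFormAndSameRest} then delivers $p' = q'$.

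The main obstacle is the step extracting $T_{a+n} = T_{b+n}$ in the forward direction, since $p'$ does not obviously encode $T_{a+n}$; this is exactly where Lemma \ref{DiffCjSuffixSameLk} earns its keep by turning a $C_j$-mismatch at position $n-1$ into a numerical discrepancy in $p'_{2n-2}$. The other delicate point is the bookkeeping: Proposition \ref{LengthOfAlphaForAGBForN} is stated for subpermutations of length $n+1$, whereas ours have length $n+2$, so the dividing cases $c \in \{0, 1\}$ versus $c \geq 2$ translate precisely to the excluded $n \in \{2^r - 1, 2^r\}$ of part (b).
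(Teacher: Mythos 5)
Your proof is correct and follows essentially the same route as the paper's: both rest on the same chain of ingredients (Lemma \ref{pISqTHENppISqp}, the even-index reading of $p'$ giving $L^2(p)=L^2(q)$, Lemma \ref{DiffCjSuffixSameLk} to detect a $C_j$-class mismatch at position $n-1$, Theorem \ref{SameFormIFFCompPair} with Propositions \ref{LengthOfAlphaForAGBForN} and \ref{ImageOfTypeK} to control complementary-pair types, and Corollary \ref{Cor_pNEQq_SameFormAndSameRest} for the converse in (b)). The only difference is organizational — you prove the single implication ``$p'=q'$ implies same form'' up front rather than splitting into the paper's cases on whether $p$ and $q$ agree or share a form — and your translation of the excluded lengths into $c\in\{0,1\}$ versus $c\ge 2$ is handled correctly.
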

\begin{proof}
Let $p = \pi_T[a,a+n+1]$ and $q = \pi_T[b,b+n+1]$, $a \neq b$, be subpermutations of $\pi_T$ of length $n+2$, with $n \geq 9$, and let $u = T[a,a+n-1]$ and $v = T[b,b+n-1]$.  Since the length of $u$ and $v$ is at least 9, each of $C_0$, $C_1$, $C_2$, and $C_3$ occurs in both of $u$ and $v$.  Then let $p' = \delta(p) = \pi_{d(T)}[2a,2a+2n-1]$ and $q' = \delta(q) = \pi_{d(T)}[2b,2b+2n-1]$ as in Proposition $\ref{CalcTheFwdImage}$.

$\vspace{1.0ex}$

$\textbf{(a)}$  Suppose $n \neq 2^r-1$ or $2^r$ for any $r \geq 3$.  If $p = q$ then $p' = q'$ by Lemma $\ref{pISqTHENppISqp}$.

Suppose $p \neq q$.  Then either $p$ and $q$ have the same form, or they do not have the same form.  These cases will be handled independently.

$\textbf{Case (a.1)}$  Suppose $p$ and $q$ have the same form.  Since $n \neq 2^r-1$ or $2^r$ and $p$ and $q$ have the same form, $p$ and $q$ are a complementary pair of type $k \geq 3$, by Theorem $\ref{SameFormIFFCompPair}$ and Proposition $\ref{ImageOfTypeK}$.  Thus $L^2(p)$ and $L^2(q)$ are a complementary pair of type $k-2$, where $k-2 \geq 1$, and so $L^2(p) \neq L^2(q)$.  Therefore $p' \neq q'$, by Corollary $\ref{Corr_uNEQv}$.

$\textbf{Case (a.2)}$  Suppose $p$ and $q$ do not have the same form.  Thus there is an $0 \leq i \leq n$ so that, without loss of generality, $p_i < p_{i+1}$ and $q_i > q_{i+1}$.  We may say $i = n$ is the only $i$ so that $p_i < p_{i+1}$ and $q_i > q_{i+1}$, because if there is an $0 \leq i \leq n-1$ so $p_i < p_{i+1}$ and $q_i > q_{i+1}$ then $u \neq v$, and $p' \neq q'$ by Corollary $\ref{Corr_uNEQv}$.  We may also say $L^2(p) = L^2(q)$, because if $L^2(p) \neq L^2(q)$ then $p' \neq q'$ be Corollary $\ref{Corr_uNEQv}$.  Thus $u = v$ and $p_n < p_{n+1}$ and $q_n > q_{n+1}$ and we see 
$$ T[a] = u0 \cdots \hspace{3.0ex} T[b] = v1\cdots = u1\cdots. $$
Thus $T[a+n-1]$ and $T[b+n-1]$ each have a different $C_j$ class as a prefix and by Lemma $\ref{DiffCjSuffixSameLk}$, $\abs{p'_{2n-2} - q'_{2n-2}} \geq 1$ and $\abs{p'_{2n-1} - q'_{2n-1}} \geq 1$ so $p' \neq q'$.

Therefore if $p$ and $q$ are subpermutations of $\pi_T$ of length $n+2$, with $n \neq 2^r-1$ or $2^r$ for any $r \geq 3$, $p = q$ if and only if $p' = q'$.

$\vspace{1.0ex}$

$\textbf{(b)}$  Suppose $n = 2^r-1$ or $2^r$ for some $r \geq 3$.  If $p$ and $q$ do not have the same form, there is an $0 \leq i \leq n$ so that, without loss of generality, $p_i < p_{i+1}$ and $q_i > q_{i+1}$ and $p \neq q$.  Thus $p$ and $q$ are as in Case $(a.2)$, and $p' \neq q'$.

Suppose $p$ and $q$ have the same form, so for each $0 \leq i \leq n-1$, there is some $j$ so that both $T[a+i]$ and $T[b+i]$ have $C_j$ as a prefix.  We can say $p \neq q$, because if $p = q$ then $p' = q'$ by Lemma $\ref{pISqTHENppISqp}$.  By Theorem $\ref{SameFormIFFCompPair}$ and Proposition $\ref{LengthOfAlphaForAGBForN}$, $p$ and $q$ are a complementary pair of type $1$ or $2$ and $L^2(p) = L^2(q)$ by Proposition $\ref{ImageOfTypeK}$.  So by Corollary $\ref{Cor_pNEQq_SameFormAndSameRest}$, $p' = q'$. 

Therefore if $p$ and $q$ are subpermutations of $\pi_T$ of length $n+2$, with $n = 2^r-1$ or $2^r$ for some $r \geq 3$, $p$ and $q$ have the same form if and only if $p' = q'$.
$\qed$
\end{proof}

Thus, for $n \geq 9$, the maps $\delta$, $\delta_L$, and $\delta_R$ 
when applied to permutations associated with the Thue-Morse word are injective when $n \neq 2^r-1$ or $2^r$ for any $r \geq 3$, so $\abs{\mathrm{Perm}^{d(T)}_{ev}(2n)} = \abs{\mathrm{Perm}^T(n+2)}$, $\abs{\mathrm{Perm}^{d(T)}_{ev}(2n-1)} = \abs{\mathrm{Perm}^T(n+2)}$, and $\abs{\mathrm{Perm}^{d(T)}_{odd}(2n-1)} = \abs{\mathrm{Perm}^T(n+2)}$.

When $n = 2^r-1$ or $2^r$ for some $r \geq 3$ the maps $\delta$, $\delta_R$, and $\delta_L$ are surjective, but not injective because complementary pairs of type 1 or 2 will give the same subpermutation under $\delta$.  In this case, if $p$ and $q$ are subpermutations of $\pi_T$ of length $n+2$, where $p$ has form $u$ and $q$ has form $v$, $\abs{u} = \abs{v} = n+1$, $\delta(p) = \delta(q)$ if and only if $u = v$.  Thus with Proposition $\ref{DubRestAreBijection}$ we see $\delta_L(p) = \delta_L(q)$ and $\delta_R(p) = \delta_R(q)$ if and only if $u = v$.  Thus the number of subpermutations of $\pi_{d(T)}$ for these lengths are determined by the number of factors of $T$, so $\abs{\mathrm{Perm}^{d(T)}_{ev}(2n)} = \abs{\scr{F}_T(n+1)}$, $\abs{\mathrm{Perm}^{d(T)}_{ev}(2n-1)} = \abs{\scr{F}_T(n+1)}$, and $\abs{\mathrm{Perm}^{d(T)}_{odd}(2n-1)} = \abs{\scr{F}_T(n+1)}$.

The following lemma shows when the map $\delta_M$ is injective when applied to permutations associated with the Thue-Morse word.
\begin{lemma}
\label{ThueMorseDubRestAreBijection}
For the Thue-Morse word $T$, let $p$, $q$, $p'$, and $q'$ be as above.  Then
\begin{itemize}
\item[(a)] If $n \neq 2^r-1$, $2^r$, or $2^r+1$ for any $r \geq 3$, $p' = q'$ if and only if $M(p') = M(q')$.
\item[(b)] If $n = 2^r-1$, $2^r$, or $2^r+1$ for some $r \geq 3$, $p$ and $q$ have the same form if and only if $M(p') = M(q')$.
\end{itemize}
\end{lemma}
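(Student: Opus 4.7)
The plan is to mirror the strategy of Lemma \ref{SturmianDubRestAreBijection}, replacing Sturmian uniqueness with the Thue-Morse complementary-pair machinery of Theorem \ref{SameFormIFFCompPair} and Propositions \ref{ImageOfTypeK}--\ref{LengthOfAlphaForAGBForN}. In both parts, the starting observation is that $M(p') = M(q')$ combined with Claim \ref{RestEqualThenduISdv} gives $d(u) = d(v)$, hence $u = v$, so $p$ and $q$ agree on the first $n$ positions of their form. A comparison argument on the interior entry $p'_{2n-2}$ of $M(p')$, via Lemma \ref{DiffCjSuffixSameLk}, then forces $T[a+n] = T[b+n]$, so $p$ and $q$ have the same full form. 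Theorem \ref{SameFormIFFCompPair} together with Proposition \ref{LengthOfAlphaForAGBForN} yields either $p = q$ or a complementary pair of type $k = c+1$, where $n+1 = 2^r + c$ with $0 \leq c \leq 2^{r-1}$.

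For part (a), assume $p' \neq q'$. Corollary \ref{Cor_pNEQq_SameFormAndSameRest} forces $L^2(p) \neq L^2(q)$, and Proposition \ref{ImageOfTypeK} makes these a complementary pair of type $k-2 \geq 1$; hence $k \geq 3$ and $n \geq 2^r + 1$. If $k = 3$ then $n = 2^r + 1$ is in the excluded set, contradiction. For $k \geq 4$, Proposition \ref{CalcTheFwdImage} yields $p'_i - q'_i = +\varepsilon$ for $i \in \{0, 1, \ldots, 2k-5\}$, $-\varepsilon$ for $i \in \{2n-2k+4, \ldots, 2n-1\}$, and $0$ elsewhere. Since $p'$ and $q'$ are both permutations of $\{1, \ldots, 2n\}$ sharing their middle values, the value sets $A = \{p'_i : i \in \{0, \ldots, 2k-5\}\}$ and $B = \{p'_i : i \in \{2n-2k+4, \ldots, 2n-1\}\}$ are forced to satisfy $B = A - \varepsilon$. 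A counting argument (at most $2$ of the $2k-4$ consecutive-value pairs can have an endpoint at position $0$ or $2n-1$, leaving at least $2k-6 \geq 2$ pairs entirely inside $M$) produces a consecutive pair whose two positions both lie strictly inside $M$; the corresponding rank inverts between $M(p')$ and $M(q')$, contradicting $M(p') = M(q')$.

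For part (b), the cases $n = 2^r - 1$ and $n = 2^r$ (with $k \leq 2$) reduce to Lemma \ref{TM_pISq_DiffForm}(b), which gives \emph{same form if and only if $p' = q'$}; both directions of the $M$-equivalence then follow immediately. For $n = 2^r + 1$ (with $k = 3$), the only $+$-position strictly inside $M$ is $i = 1$ and the only $-$-position strictly inside is $i = 2n-2$, with $|A| = |B| = 2$; the forced pairing $B = A - \varepsilon$ admits two a priori arrangements, and I would show that the Thue-Morse class structure always selects the arrangement in which the in-$M$ consecutive-value pair has one endpoint at position $0$ or $2n-1$ (outside $M$), so no rank inversion arises and $M(p') = M(q')$ holds. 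Excluding the alternative arrangement parallels Lemma \ref{NoType1PairsInDW}: it would use a case analysis of the $C_j$-class prefixes at positions $a, b, a+n-1, b+n-1$, combined with explicit computation of $p'_0, p'_1, p'_{2n-2}, p'_{2n-1}$ from Proposition \ref{CalcTheFwdImage}. The reverse direction of the iff reuses the same-form derivation of the first paragraph.

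The main obstacle is justifying the favorable value arrangement in part (b) for $n = 2^r + 1$: showing that a complementary pair of type $3$ in $\pi_T$ always yields the pairing between $A$ and $B$ that avoids an in-$M$ rank inversion, paralleling Lemma \ref{NoType1PairsInDW}'s exclusion of bad value configurations but adapted to the four-position perturbation introduced by $\delta$. A secondary difficulty is executing the counting argument for $k \geq 4$ in part (a); while straightforward in spirit, it requires careful tracking of which of the $2k-4$ consecutive-value pairs have both endpoints in $\{1, \ldots, 2n-2\}$.
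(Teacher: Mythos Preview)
Your overall architecture is sound and you have identified the right toolkit, but there are two genuine gaps.

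\textbf{The ``same form'' step.} You write that after $u=v$, ``a comparison argument on $p'_{2n-2}$ via Lemma~\ref{DiffCjSuffixSameLk}'' forces $T[a+n]=T[b+n]$. But Lemma~\ref{DiffCjSuffixSameLk} has the hypothesis $L^2(p)=L^2(q)$, which you have not established: from $M(p')=M(q')$ you only control the relative order of $p'_1,\ldots,p'_{2n-2}$, so $L^2(p)_0$ is free. Even granting $L^2(p)=L^2(q)$, the conclusion $|p'_{2n-2}-q'_{2n-2}|\ge 1$ does not by itself contradict $M(p')=M(q')$, since $M(p')_{2n-3}$ can differ from $p'_{2n-2}$ by up to $2$. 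The paper does not try to prove ``same form'' directly; instead it keeps a genuine case split. When $p,q$ do \emph{not} have the same form (Cases~(a.2)/(b.2)), it argues via $L(p')\ne L(q')$ (Proposition~\ref{DubRestAreBijection}) that the only disagreement is between positions $0$ and $2n-2$, deduces $L^2(p)\ne L^2(q)$, and then uses that $L^2(p),L^2(q)$ are themselves a complementary pair of type $\ge 2$ (since $u=v$ gives them the same form and $n\ne 2^r+1$) to exhibit an inversion strictly inside $M$. Your first paragraph hides exactly this work.

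\textbf{Part (a), $k\ge 4$.} Your value-set argument with $A,B$ and a count of consecutive-value pairs is more delicate than needed. The paper simply observes that if $L^2(p),L^2(q)$ are complementary of type $k-2\ge 2$, then $L^2(p)_{k-3}$ and $L^2(p)_{n-1}$ are in opposite order to $L^2(q)_{k-3},L^2(q)_{n-1}$; pushing through $\delta$, positions $2k-6$ and $2n-2$ in $p'$ (both strictly inside $M$ since $k\ge 4$) give an order inversion between $M(p')$ and $M(q')$. One line, no counting.

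\textbf{Part (b), $n=2^r+1$.} You propose to rule out the ``bad arrangement'' by a case analysis on the $C_j$-classes at $a,b,a{+}n{-}1,b{+}n{-}1$. The paper's key observation, which you do not state and which is what makes the computation go through, is structural: if $p,q$ are a complementary pair of type $3$, then $p_0p_1p_2$ and $p_{n-1}p_np_{n+1}$ have the same relative order, hence $T[a,a{+}1]=T[a{+}n{-}1,a{+}n]$, so positions $a$ and $a{+}n{-}1$ carry the \emph{same} $C_j$-class (and likewise for $b$). With this, Proposition~\ref{CalcTheFwdImage} gives $p'_0=y,\ p'_1=z,\ p'_{2n-2}=y{+}1,\ p'_{2n-1}=z{+}1$ and $q'$ the swap, and one checks directly that $M(p')=M(q')$. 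Without this observation your ``two a~priori arrangements'' cannot be separated; indeed your asserted relation $B=A-\varepsilon$ already presupposes it.
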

\begin{proof}

Let $p = \pi_T[a,a+n+1]$ and $q = \pi_T[b,b+n+1]$ be subpermutations of $\pi_T$ of length $n+2 \geq 11$, and $p' = \delta(p)$ and $q' = \delta(q)$ as in Proposition $\ref{CalcTheFwdImage}$.  Let $u = T[a,a+n-1]$ and $v = T[b,b+n-1]$.  It should be clear for either case that if $p' = q'$ then $M(p') = M(q')$.  

We will again use the notation
$$ U_j = \{ \hspace{0.5ex} 0 \leq i \leq n-1 \hspace{0.5ex} | \hspace{0.5ex} T[a+i] \text{ has } C_j \text{ as a prefix.} \hspace{0.5ex} \} $$
$$ V_j = \{ \hspace{0.5ex} 0 \leq i \leq n-1 \hspace{0.5ex} | \hspace{0.5ex} T[b+i] \text{ has } C_j \text{ as a prefix.} \hspace{0.5ex} \} $$
and due to the length of $u$ and $v$ we know $\abs{U_j} \geq 1$ and $\abs{V_j} \geq 1$ for each $j$.

$\vspace{0.5ex}$

$\textbf{(a)}$  Let $n \neq 2^r-1$, $2^r$, or $2^r+1$ for any $r \geq 3$.  It should be clear that if $p' = q'$ then $M(p') = M(q')$.

Suppose $p' \neq q'$, so $p \neq q$ by Lemma $\ref{TM_pISq_DiffForm}$, and assume $M(p') = M(q')$.  For each pair of real numbers $i \neq j$ where $0 \leq i,j \leq 2n-3$, 
$$M(p')_i < M(p')_j \iff M(q')_i < M(q')_j \implies p'_{i+1} < p'_{j+1} \iff q'_{i+1} < q'_{j+1}. $$
Since $M(p') = M(q')$ then $d(u) = d(v)$, by Claim $\ref{RestEqualThenduISdv}$, and so $u = v$.  

$\textbf{Case (a.1)}$  Suppose $p$ and $q$ have the same form.  By Theorem $\ref{SameFormIFFCompPair}$ and Proposition $\ref{LengthOfAlphaForAGBForN}$, $p$ and $q$ are a complementary pair of type $k \geq 4$.  By Proposition $\ref{ImageOfTypeK}$, $L^2(p)$ and $L^2(q)$ are a complementary pair of type $k-2 \geq 2$.  Thus, without loss of generality, $L^2(p)_{k-2-1}+1 = L^2(p)_{n-1}$ and $L^2(q)_{n-1}+1 = L^2(q)_{k-2-1}$.  Thus $L^2(p)_{k-3} < L^2(p)_{n-1}$ and $L^2(q)_{k-3} > L^2(q)_{n-1}$, so $p'_{2k-6} < p'_{2n-2}$ and $q'_{2k-6} > q'_{2n-2}$.  Thus $M(p')_{2k-5} < M(p')_{2n-3}$ and $M(q')_{2k-5} > M(q')_{2n-3}$ so $M(p') \neq M(q')$ which is a contradiction.

$\textbf{Case (a.2)}$  Suppose $p$ and $q$ do not have the same form.  Because $p' \neq q'$, $R(p') \neq R(q')$ and $L(p') \neq L(q')$ by Proposition $\ref{DubRestAreBijection}$, but 
$$R(L(p')) = M(p') = M(q') = R(L(q')).$$
Thus there is an $1 \leq i \leq 2n-2$ so that $L(p')_0 < L(p')_i$ and $L(q')_0 > L(q')_i$.  If $1 \leq i \leq 2n-3$, we find a contradiction in the same fashion as in Proposition $\ref{DubRestAreBijection}$, case $\bf{(a.1)}$.  Thus we can assume that $i = 2n-2$ is the only $i$ so that $L(p')_0 < L(p')_i$ and $L(q')_0 > L(q')_i$.  Thus 
$$ L(p')_0 < L(p')_{2n-2} \implies p'_0 < p'_{2n-2} \implies p_0 < p_{2n-1} \implies L^2(p)_0 < L^2(p)_{n-1} $$
$$ L(q')_0 > L(q')_{2n-2} \implies q'_0 > q'_{2n-2} \implies q_0 > q_{2n-1} \implies L^2(q)_0 > L^2(q)_{n-1} $$
so $L^2(p) \neq L^2(q)$, and $u = v$.  Thus, by Theorem $\ref{SameFormIFFCompPair}$ and Proposition $\ref{LengthOfAlphaForAGBForN}$, $L^2(p)$ and $L^2(q)$ are a complementary pair of type $k \geq 2$.  Thus, without loss of generality, $L^2(p)_{k-1} < L^2(p)_{n-1}$ and $L^2(q)_{k-1} > L^2(q)_{n-1}$, so $p'_{2k-2} < p'_{2n-2}$ and $q'_{2k-2} > q'_{2n-2}$.  Thus $M(p')_{2k-1} < M(p')_{2n-3}$ and $M(q')_{2k-1} > M(q')_{2n-3}$, which contradicts the assumption that $M(p') = M(q')$.

Therefore if $n \neq 2^r-1$, $2^r$, or $2^r+1$ for any $r \geq 3$, $p' = q'$ if and only if $M(p') = M(q')$.

$\vspace{0.5ex}$

$\textbf{(b)}$  Let $n = 2^r-1$, $2^r$, or $2^r+1$ for some $r \geq 3$.

$\textbf{Case (b.1)}$  Suppose $p$ and $q$ have the same form.  So for each $0 \leq i \leq n$, 
$$ p_i < p_{i+1} \iff q_i < q_{i+1}. $$
So we know for each $i$, $T[a+i]$ and $T[b+i]$ both have the same $C_j$ as a prefix, so
$$ i \in U_j \iff i \in V_j $$
and so $\abs{U_j} = \abs{V_j}$ for each $j$.

If $p = q$, then $p' = q'$ and $M(p') = M(q')$, so we can say $p \neq q$.  If $n = 2^r-1$ or $2^r$ then $p' = q'$ by Lemma $\ref{TM_pISq_DiffForm}$ and $M(p') = M(q')$, so we can say $n = 2^r+1$ for some $r \geq 3$.  Thus $p$ and $q$ are a complementary pair of type 3 by Theorem $\ref{SameFormIFFCompPair}$ and Proposition $\ref{LengthOfAlphaForAGBForN}$, and $L^2(p)$ and $L^2(q)$ are a complementary pair of type 1 by Proposition $\ref{ImageOfTypeK}$.  So, without loss of generality, there is some $1 \leq x \leq n-1$ so that $L^2(p)_0 = L^2(q)_{n-1} = x$ and $L^2(p)_{n-1} = L^2(q)_0 = x+1$, and for each $1 \leq i \leq n-2$ $L^2(p)_i = L^2(q)_i$.

Since $p$ and $q$ are a complementary pair of type 3 we know $T[a,a+1] = T[a+n-1,a+n]$, thus we know $T[b,b+1] = T[b+n-1,b+n] = T[a,a+1]$ because $u=v$.  So there is a $j$ so that each of $T[a]$, $T[a+n-1]$, $T[b]$, and $T[b+n-1]$ each have $C_j$ as a prefix.  So by Proposition $\ref{CalcTheFwdImage}$, there are some $y$ and $z$ so that
\begin{align*}
p'_0 = y &\hspace{8.0ex} q'_0 = y+1 \\
p'_1 = z &\hspace{8.0ex} q'_1 = z+1 \\
p'_{2n-2} = y+1 &\hspace{8.0ex} q'_{2n-2} = y \\
p'_{2n-1} = z+1 &\hspace{8.0ex} q'_{2n-1} = z 
\end{align*}
and for each $2 \leq i \leq 2n-3$, $p'_i = q'_i$.  The order of $y$ and $z$ will be either $y < y+1 < z < z+1$ (so $T_a = T_b = 0$) or $z < z+1 < y < y+1$ (so $T_a = T_b = 1$).  If $y < y+1 < z < z+1$, then $M(p')_0 = z-1 = M(q')_0$ and $M(p')_{2n-2} = y = M(q')_{2n-2}$.  If $z < z+1 < y < y+1$, then $M(p')_0 = z = M(q')_0$ and $M(p')_{2n-2} = y-1 = M(q')_{2n-2}$.  In either case we have, for $2 \leq i \leq 2n-3$,
$$ p'_i < y \iff q'_i < y+1 \hspace{3.0ex} \text{ and } \hspace{3.0ex} p'_i < z+1 \iff q'_i < z $$
so $M(p')_{i-1} = M(q')_{i-1}$.  Therefore $M(p') = M(q')$.

Therefore if $p$ and $q$ have the same form then $M(p') = M(q')$.

$\textbf{Case (b.2)}$  Suppose $p$ and $q$ do not have the same form, and assume $M(p') = M(q')$.  If $p$ and $q$ do not have the same form, there is an $0 \leq i \leq n$ so that, without loss of generality, $p_i < p_{i+1}$ and $q_i > q_{i+1}$ and $p \neq q$.  By Lemma $\ref{TM_pISq_DiffForm}$, $p' \neq q'$.  Then as in Case $(a.2)$ we find a contradiction to the assumption, so $M(p') \neq M(q')$.

Therefore $p$ and $q$ have the same form if and only if $M(p') = M(q')$.
$\qed$
\end{proof}

Thus, for $n \geq 9$, the map $\delta_M$ 
when applied to permutations associated with the Thue-Morse word are injective when $n \neq 2^r-1$, $2^r$, or $2^r+1$ for any $r \geq 3$, so $\abs{\mathrm{Perm}^{d(T)}_{odd}(2n-2)} = \abs{\mathrm{Perm}^T(n+2)}$.

When $n = 2^r-1$, $2^r$, or $2^r+1$ for some $r \geq 3$ the map $\delta_M$ is surjective, but not injective.  In this case, if $p$ and $q$ are subpermutations of $\pi_T$ of length $n+2$, where $p$ has form $u$ and $q$ has form $v$, $\abs{u} = \abs{v} = n+1$, $\delta_M(p) = \delta_M(q)$ if and only if $u = v$.  Thus the number of subpermutations of $\pi_{d(T)}$ of length $2n-2$ which start in an odd position are determined by the number of factors of $T$ of length $n+1$, so $\abs{\mathrm{Perm}^{d(T)}_{odd}(2n-2)} = \abs{\scr{F}_T(n+1)}$.

We are now ready to calculate the permutation complexity of $d(T)$.
\begin{theorem}
\label{DubTMPermComp}
For the Thue-Morse word $T$, let $n \geq 9$.
\begin{itemize}
\item[(a)] If $n = 2^r$, then $$ \tau_{d(T)}(2n-1) = 2^{r+2} + 2^{r+1} $$ $$ \tau_{d(T)}(2n) = 2^{r+2} + 2^{r+1} + 4 $$ 
\item[(b)] If $n = 2^r + p$ for some $0 < p \leq 2^r-1$, then $$ \tau_{d(T)}(2n-1) = 2^{r+3} + 4p $$ $$ \tau_{d(T)}(2n) = 2^{r+3} + 4p + 2 $$ 
\end{itemize}
\end{theorem}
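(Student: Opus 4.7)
The strategy is to split each count by the parity of its starting position and reduce everything to the known formulas for $\tau_T$ (Theorem \ref{PermCompIsTheFormula}) and $\rho_T$ (Proposition \ref{TMFactComplexity}). Since $\mathrm{Perm}^{d(T)}(m)$ decomposes as the disjoint union $\mathrm{Perm}^{d(T)}_{ev}(m) \cup \mathrm{Perm}^{d(T)}_{odd}(m)$ (as established in Section \ref{SecUnifRecWords}), I will write $\tau_{d(T)}(2n-1) = |\mathrm{Perm}^{d(T)}_{ev}(2n-1)| + |\mathrm{Perm}^{d(T)}_{odd}(2n-1)|$ and similarly for $\tau_{d(T)}(2n)$. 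Three of the four resulting summands are the surjective images of $\mathrm{Perm}^T(n+2)$ under $\delta_L$, $\delta_R$, and $\delta$; the remaining one, $|\mathrm{Perm}^{d(T)}_{odd}(2n)|$, is the surjective image of $\mathrm{Perm}^T(n+3)$ under $\delta_M$, obtained by reindexing $\delta_M : \mathrm{Perm}^T(n'+2) \to \mathrm{Perm}^{d(T)}_{odd}(2n'-2)$ with $n' = n+1$.

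Next I read off an injectivity dictionary from the lemmas just proved. By Lemma \ref{TM_pISq_DiffForm} together with Proposition \ref{DubRestAreBijection}, the three maps $\delta, \delta_L, \delta_R$ on $\mathrm{Perm}^T(n+2)$ are bijective unless $n \in \{2^{r'}-1, 2^{r'}\}$ for some $r' \geq 3$, and in that exceptional case each image has size $\rho_T(n+1)$ (the discussion preceding Lemma \ref{ThueMorseDubRestAreBijection}). By Lemma \ref{ThueMorseDubRestAreBijection}, the reindexed $\delta_M$ on $\mathrm{Perm}^T(n+3)$ is bijective unless $n \in \{2^{r'}-2, 2^{r'}-1, 2^{r'}\}$, and in that exceptional case its image has size $\rho_T(n+2)$.

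With this dictionary, part (a), $n = 2^r$, forces all four maps to fail, so $\tau_{d(T)}(2n-1) = 2\rho_T(n+1)$ and $\tau_{d(T)}(2n) = \rho_T(n+1) + \rho_T(n+2)$; substituting $\rho_T(2^r+1) = 3 \cdot 2^r$ and $\rho_T(2^r+2) = 3 \cdot 2^r + 4$ yields the stated values. For part (b), $n = 2^r + p$ with $0 < p \leq 2^r - 1$, I split three sub-cases: (b.i) $0 < p \leq 2^r - 3$, where every map is a bijection and Theorem \ref{PermCompIsTheFormula} gives $\tau_T(n+2) = 2^{r+2} + 2p$ and $\tau_T(n+3) = 2^{r+2} + 2p + 2$; (b.ii) $p = 2^r - 2$, where only $\delta_M$ fails (since $n+1 = 2^{r+1} - 1$), so $\tau_T(n+3)$ in the even-length count must be replaced by $\rho_T(2^{r+1})$; (b.iii) $p = 2^r - 1$, where all four maps fail ($n = 2^{r+1} - 1$ and $n+1 = 2^{r+1}$), so $\tau_T(n+2)$ is replaced by $\rho_T(2^{r+1})$ and $\tau_T(n+3)$ by $\rho_T(2^{r+1}+1)$.

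The main obstacle is confirming that the boundary sub-cases (b.ii) and (b.iii) still collapse to the uniform closed forms $2^{r+3} + 4p$ and $2^{r+3} + 4p + 2$. This reduces to verifying $\rho_T(2^{r+1}) = 2^{r+2} + 2^{r+1} - 2$ and $\rho_T(2^{r+1}+1) = 2^{r+2} + 2^{r+1}$; Brlek's piecewise formula must be applied carefully here because $2^{r+1}+1$ lies at the right endpoint of its branch, i.e.\ writing $2^{r+1}+1 = 2^{r'} + p' + 1$ forces $r' = r$ with $p' = 2^r$, not $r' = r+1$ with $p' = 0$. The remaining work is essentially bookkeeping: matching the theorem's $(r,p)$ indexing with the different indexings used inside the lemmas and inside Proposition \ref{TMFactComplexity}.
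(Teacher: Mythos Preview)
Your proposal is correct and follows essentially the same approach as the paper's own proof: the same even/odd decomposition, the same injectivity dictionary drawn from Lemma~\ref{TM_pISq_DiffForm}, Proposition~\ref{DubRestAreBijection}, and Lemma~\ref{ThueMorseDubRestAreBijection}, and the same three sub-cases $0<p\le 2^r-3$, $p=2^r-2$, $p=2^r-1$ for part~(b). Your attention to the branch endpoint $2^{r+1}+1$ in Brlek's formula is well placed and matches the paper's handling.
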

\begin{proof}
Let $n \geq 9$.  

$\vspace{1.0ex}$

$\textbf{(a)}$  Suppose $n = 2^r$.  So $2n = 2(2^r) = 2(2^{r} + 1) -2$, and from Lemma $\ref{TM_pISq_DiffForm}$ and Lemma $\ref{ThueMorseDubRestAreBijection}$ each of the maps $\delta$, $\delta_L$, $\delta_R$, and $\delta_M$
are only surjective.  

So $n + 1 = 2^r + 1 = 2^{r-1} + 2^{r-1} + 1$, and $n + 2 = 2^r + 2 = 2^r + 1 + 1$.  So by Proposition $\ref{TMFactComplexity}$
\begin{align*}
\tau_{d(T)}(2n-1) &= \abs{\mathrm{Perm}^{d(T)}_{ev}(2n-1)} + \abs{\mathrm{Perm}^{d(T)}_{odd}(2n-1)} = \abs{\scr{F}_T(n+1)} + \abs{\scr{F}_T(n+1)} \\
&= 8(2^{r-2}) + 2(2^{r-1}) + 8(2^{r-2}) + 2(2^{r-1}) = 2^{r+2} + 2^{r+1} \\
\\
\tau_{d(T)}(2n) &= \abs{\mathrm{Perm}^{d(T)}_{ev}(2n)} + \abs{\mathrm{Perm}^{d(T)}_{odd}(2n)} = \abs{\scr{F}_T(n+1)} + \abs{\scr{F}_T(n+2)} \\
&= 8(2^{r-2}) + 2(2^{r-1} ) + 6(2^{r-1}) + 4(1) = 2^{r+2} + 2^{r+1} + 4
\end{align*}

$\vspace{1.0ex}$

$\textbf{(b)}$  Suppose $n = 2^r + p$.  There will be 3 cases to consider.  First when $0 < p \leq 2^r-3$, next when $p = 2^r-2$, and finally when $p = 2^r-1$.

$\vspace{0.5ex}$

$\textbf{(b.1)}$  Suppose $0 < p \leq 2^r-3$.  So $2n = 2(2^r + p) = 2(2^{r} + p + 1) -2$, and from Lemma $\ref{TM_pISq_DiffForm}$ and Lemma $\ref{ThueMorseDubRestAreBijection}$ each of the maps $\delta$, $\delta_L$, $\delta_R$, and $\delta_M$ 
are injective.  

So $n + 2 = 2^r + p + 2$, and $n + 3 = 2^r + p + 3$.  So by Theorem $\ref{PermCompIsTheFormula}$
\begin{align*}
\tau_{d(T)}(2n-1) &= \abs{\mathrm{Perm}^{d(T)}_{ev}(2n-1)} + \abs{\mathrm{Perm}^{d(T)}_{odd}(2n-1)} = \abs{\mathrm{Perm}^T(n+2)} + \abs{\mathrm{Perm}^T(n+2)} \\
&= 2(2^r + p + 2 - 2) + 2(2^r + p + 2 - 2) = 2^{r+2} + 4p \\
\\
\tau_{d(T)}(2n) &= \abs{\mathrm{Perm}^{d(T)}_{ev}(2n)} + \abs{\mathrm{Perm}^{d(T)}_{odd}(2n)} = \abs{\mathrm{Perm}^T(n+2)} + \abs{\mathrm{Perm}^T(n+3)} \\
&= 2(2^r + p + 2 - 2) + 2(2^r + p + 3 - 2) = 2^{r+2} + 4p + 2
\end{align*}

$\vspace{0.5ex}$

$\textbf{(b.2)}$  Suppose $p = 2^r-2$, so $n = 2^r+2^r-2 = 2^{r+1}-2$.  From Lemma $\ref{TM_pISq_DiffForm}$ each of the maps $\delta$, $\delta_L$, and $\delta_R$ 
are injective.  Then we have $2n = 2(2^{r+1}-2) = 2(2^{r+1} -1) -2$ and by Lemma $\ref{ThueMorseDubRestAreBijection}$ the map $\delta_M$
is only surjective.  

So $n + 2 = 2^{r+1} = 2^{r} + 2^{r} = 2^{r} + (2^{r} - 1) + 1$.  So by Proposition $\ref{TMFactComplexity}$ and Theorem $\ref{PermCompIsTheFormula}$
\begin{align*}
\tau_{d(T)}(2n-1) &= \abs{\mathrm{Perm}^{d(T)}_{ev}(2n-1)} + \abs{\mathrm{Perm}^{d(T)}_{odd}(2n-1)} = \abs{\mathrm{Perm}^T(n+2)} + \abs{\mathrm{Perm}^T(n+2)} \\
&= 2(2^{r+1} + 2^r - 2) + 2(2^{r+1} + 2^{r} - 2) = 2^{r+3} + 2^{r+2} - 8 = 2^{r+3} + 4(2^{r} - 2) \\
\\
\tau_{d(T)}(2n) &= \abs{\mathrm{Perm}^{d(T)}_{ev}(2n)} + \abs{\mathrm{Perm}^{d(T)}_{odd}(2n)} = \abs{\mathrm{Perm}^T(n+2)} + \abs{\scr{F}_T(n+2)} \\
&= 2(2^{r+1} + 2^r - 2) + 8(2^{r-1}) + 2(2^{r} - 1) = 2^{r+3} + 2^{r+2}-6 = 2^{r+3} + 4(2^{r}-2) + 2
\end{align*}

$\vspace{0.5ex}$

$\textbf{(b.3)}$  Suppose $p = 2^r-1$, so $n = 2^r+2^r-1 = 2^{r+1}-1$.  So $2n = 2(2^{r+1}-1) = 2(2^{r+1}) -2$, and from Lemma $\ref{TM_pISq_DiffForm}$ and Lemma $\ref{ThueMorseDubRestAreBijection}$ each of the maps $\delta$, $\delta_L$, $\delta_R$, and $\delta_M$ 
are only surjective.  

So $n + 1 = 2^{r+1} = 2^{r} + (2^{r} - 1) + 1$, and $n + 2 = 2^{r+1} + 1 = 2^{r} + 2^{r} + 1$.  So by Proposition $\ref{TMFactComplexity}$
\begin{align*}
\tau_{d(T)}(2n-1) &= \abs{\mathrm{Perm}^{d(T)}_{ev}(2n-1)} + \abs{\mathrm{Perm}^{d(T)}_{odd}(2n-1)} = \abs{\scr{F}_T(n+1)} + \abs{\scr{F}_T(n+1)} \\
&= 8(2^{r-1}) + 2(2^{r} - 1) + 8(2^{r-1}) + 2(2^{r} - 1) = 2^{r+3} + 2^{r+2} - 4 = 2^{r+3} + 4(2^{r} - 1) \\
\\
\tau_{d(T)}(2n) &= \abs{\mathrm{Perm}^{d(T)}_{ev}(2n)} + \abs{\mathrm{Perm}^{d(T)}_{odd}(2n)} = \abs{\scr{F}_T(n+1)} + \abs{\scr{F}_T(n+2)} \\
&= 8(2^{r-1}) + 2(2^{r} - 1) + 8(2^{r-1}) + 2(2^{r}) = 2^{r+3} + 2^{r+2}-2 = 2^{r+3} + 4(2^{r}-1) + 2
\end{align*}
$\qed$
\end{proof}

\bibliographystyle{plain}
\bibliography{mybib}

\begin{thebibliography}{10}

\bibitem{AllSha99}
J.-P. Allouche and J.~Shallit.
\newblock The ubiquitous {P}rouhet-{T}hue-{M}orse sequence.
\newblock In {\em Sequences and Their Applications, Proc. SETA'98}, pages
  1--16. Springer-Verlag, 1999.

\bibitem{AvgFriKamSal}
S.V. Avgustinovich, A.E. Frid, T.~Kamae, and P.V. Salimov.
\newblock Infinite permutations of lowest maximal pattern complexity, 2009.
\newblock http://arxiv.org/abs/0910.5696v2.

\bibitem{Berstel96}
J.~Berstel.
\newblock Recent results in sturmian words.
\newblock In J.~Dassaw, editor, {\em Developments in Language Theory}. World
  Scientific, Singapore, 1995.

\bibitem{Brlek89}
S.~Brlek.
\newblock Enumeration of factors in the {T}hue-{M}orse word.
\newblock {\em Discrete Appl. Math}, 24:83--96, 1989.

\bibitem{CovHed73}
E.M. Coven and G.A. Hedlund.
\newblock Sequences with minimal block growth.
\newblock {\em Math. Systems Theory}, 7(2):138--153, 1973.

\bibitem{FlaFrid}
D.G. Fon-Der-Flaass and A.E. Frid.
\newblock On periodicity and low complexity of infinite permutations.
\newblock {\em European J. Combin.}, 28(8):2106--2114, 2007.

\bibitem{AlgCombOnWords}
M.~Lothaire.
\newblock {\em Algebraic Combinatorics on Words}, volume~90 of {\em
  Encyclopedia of Mathematics and its Applications}.
\newblock Cambridge University Press, 2002.

\bibitem{LucaVarr89}
A.de Luca and S.~Varricchio.
\newblock Some combinatorial properties of the {T}hue-{M}orse sequence and a
  problem in semigroups.
\newblock {\em Theoret. Comput. Sci.}, 63:333--348, 1989.

\bibitem{Makar06}
M.A. Makarov.
\newblock On permutations generated by infinite binary words.
\newblock {\em Sib. \`{E}lektron. Mat. Izv.}, 3:304--311, 2006.
\newblock (in Russian).

\bibitem{Makar09}
M.A. Makarov.
\newblock On the permutations generated by the {S}turmian words.
\newblock {\em Sib. Math. J.}, 50(3):674--680, 2009.

\bibitem{Makar10}
M.A. Makarov.
\newblock On the infinite permutation generated by the period doubling word.
\newblock {\em European J. Combin.}, 31(1):368--378, 2010.

\bibitem{Thue12}
A.~Thue.
\newblock \"{U}ber die gegenseitige lage gleicher {T}eile gewisser
  {Z}eichenreihen.
\newblock {\em Norske vid. Selsk. Skr. Mat. Nat. Kl.}, 1:1--67, 1912.

\bibitem{Widmer10}
S.~Widmer.
\newblock Permutation complexity of the {T}hue-{M}orse word.
\newblock {\em Adv. in Appl. Math.}
\newblock 2010. doi:10.1016/j.aam.2010.08.002.

\end{thebibliography}

\end{document}